\documentclass{article}
\usepackage {amsmath, amsfonts, amssymb, mathrsfs, amsthm,accents, sectsty,hyphenat, calc}
\usepackage[usenames]{color}
\usepackage[nohug,midshaft]{diagrams}
\usepackage{palatino}
\setlength{\parindent}{0pt} \setlength{\parskip}{10pt plus 2pt minus 1pt}
\topmargin=-0.5in \headheight=0in \headsep=0.25in \textheight=9in
\footskip=0.75in

\def\mf#1{\mathfrak{#1}}
\def\mc#1{\mathcal{#1}}
\def\mb#1{\mathbb{#1}}
\def\tx#1{{\rm #1}}
\def\tb#1{\textbf{#1}}

\def\ts#1{\textsf{#1}}
\def\tr{\tx{tr}\,}

\def\C{\mathbb{C}}
\def\Q{\mathbb{Q}}

\def\Z{\mathbb{Z}}

\def\ol#1{\overline{#1}}
\def\ul#1{\underline{#1}}

\def\hat{\widehat}

\def\rw{\rightarrow}
\def\lw{\leftarrow}

\def\lrw{\longrightarrow}

\def\thrw{\twoheadrightarrow}

\def\lw{\leftarrow}

\def\<{\langle}
\def\>{\rangle}


\def\pf{\tb{Proof: }}

\newenvironment{mytitle}
{\begin{center}\large\sc}
{\end{center}}

\newarrow{Equals}{=}{=}{=}{=}{}

\newtheorem{thm}{Theorem}[subsection]
\newtheorem{lem}[thm]{Lemma}
\newtheorem{pro}[thm]{Proposition}

\newtheorem{dfn}[thm]{Definition}
\newtheorem{fct}[thm]{Fact}

\sectionfont{\center\sc\normalsize}
\subsectionfont{\bf\normalsize}

\numberwithin{equation}{section}

\def\phi{\varphi}

\begin{document}

\begin{mytitle} Supercuspidal $L$-packets via isocrystals \end{mytitle}
\begin{center} Tasho Kaletha \end{center}
\begin{abstract} In a recent paper \cite{DR09}, DeBacker and Reeder construct and parameterize $L$-packets on pure inner forms
of unramified $p$-adic groups, that consist of depth zero supercuspidal representations. We generalize their work to non-pure inner forms, by providing an
alternative construction based on the theory of isocrystals with additional structure due to Kottwitz. Furthermore, we show the stability and endoscopic transfer for these $L$-packets.
\end{abstract}

Given a reductive group $G$ over a local field $F$, the local Langlands correspondence seeks to parameterize the irreducible admissible representations of
$G(F)$ by means of Langlands parameters. Each Langlands parameter is supposed to correspond to an $L$-packet -- a finite set of representations of $G(F)$ that
satisfies many properties. Recently, DeBacker and Reeder have considered groups $G$ which are pure inner forms of unramified groups, and a certain class of
elliptic Langlands parameters. To each such parameter they construct a finite set of depth zero supercuspidal representations of $G(F)$ and its pure inner forms. The stability of these packets was proved in
\cite{DR09}, and their endoscopic transfer was proved in \cite{Kal09}.

The purpose of this paper is to generalize this work to non-pure inner forms of unramified groups. The concept of pure inner forms was introduced by Vogan, who
realized that the notion of an inner form is not rigid enough for the purposes of the local Langlands correspondence. Unfortunately, not all inner forms can be
rigidified to pure inner forms. Kottwitz observed that isocrystals with additional structure can be used to provide a rigidification of inner forms of
$p$-adic groups that fits very naturally in the framework of the local Langlands correspondence and endoscopy, and moreover has the potential to rigidify all inner forms of a given quasi-split group. We use Kottwitz's idea and his work
\cite{Kot85}, \cite{Kot97} on isocrystals to give a construction of $L$-packets on these rigidified inner forms. Our construction is different from that of
DeBacker and Reeder, but gives rise to the same packets on pure inner forms. Moreover, we prove the stability and endoscopic transfer of our packets. In the case of a connected center, all inner forms can be rigidified using isocrystals. The case of a disconnected center can be reduced to that
of a connected center. The techniques and difficulties involved in this reduction are of a separate nature and will be pursued in a forthcoming paper.

To explain the results more precisely, let $F$ be a $p$-adic field with Weil group $W_F$ and $G$ an unramified connected reductive group defined over $F$. In
\cite{Kot97}, Kottwitz introduces a set $\tb{B}(G)_b$, which consists of certain cohomology classes, and shows how each element $b \in \tb{B}(G)_b$ gives rise
to an equivalence class of inner forms $G^b$ of $G$. Let $\hat G$ be a Langlands dual group for $G$, and $^LG = \hat G \rtimes W_F$ the corresponding
$L$-group. The first main result of this paper is the explicit construction and parametrization of a packet $\Pi_{[\phi]}^b$ of representations of $G^b(F)$ for each $b \in
\tb{B}(G)_b$ and each Langlands parameter
\[ \phi : W_F \rw {^LG} \]
which is subject to certain properties (see Section \ref{sec:parms}). This packet depends only on the $\hat G$-conjugacy class $[\phi]$ of $\phi$ and on no
auxiliary choices.

The packet $\Pi_{[\phi]}^b$ is parameterized in the following way: Consider the group $S_\phi = \tx{Cent}(\phi,\hat G)$. The properties of $\phi$ imply
that this is a diagonalizable subgroup of $\hat G$ and contains $Z(\hat G)^\Gamma$ as a subgroup of finite index. It depends only on $[\phi]$ (see the
beginning of Section \ref{sec:packs} for a discussion), and we write $S_{[\phi]}$ for it. For each hyperspecial vertex $o$ in the reduced building of $G(F)$,
we construct a bijection
\begin{equation} \label{eq:packs0} X^*(S_{[\phi]}) \rw \bigsqcup_{b \in \tb{B}(G)_b} \Pi_{[\phi]}^b.\end{equation}
We show that this bijection satisfies the following two properties: First, it maps the trivial character to a representation of $G(F)$ which is generic with
respect to a character of generic depth $0$ at $o$ (see \cite{DR08} for terminology). Second, it identifies each set $\Pi_{[\phi]}^b$ with the fiber over $b$
of the map
\[X^*(S_{[\phi]}) \thrw X^*(Z(\hat G)^\Gamma) \rw \tb{B}(G)_b,    \]
where the first map is the natural restriction and the second is the isomorphism constructed by Kottwitz.

We give an explicit formula for the dependence of the bijection \eqref{eq:packs0} on the choice of the hyperspecial vertex $o$. More precisely, for each pair
of hyperspecial vertices $o,o'$ we construct an element $(o,o') \in X^*(S_{[\phi]})$ and show that the two versions of \eqref{eq:packs0} corresponding to $o$
and $o'$ differ by translation by $(o,o')$.

Our second main result is the compatibility of \eqref{eq:packs0} with the construction of \cite{DR09}. The Langlands parameters we are
considering are the same as the ones considered there. In that paper, the authors construct for each pure inner form $G^u$ of $G$, where $u \in H^1(F,G)$, a
packet $\Pi_\phi^u$ on $G^u(F)$ and a bijection
\[ X^*(C_\phi) \rw \bigsqcup_{u \in H^1(F,G)} \Pi_\phi^u, \]
where $C_\phi = \pi_0(S_\phi)$. There is a natural injection $H^1(F,G) \rw \tb{B}(G)_b$ and if $b$ is the image of $u$, then $G^b=G^u$. We show that in this
case, we have $\Pi_{[\phi]}^b=\Pi_\phi^u$, and moreover we have a commutative diagram
\begin{diagram}
X^*(S_{[\phi]})&&\rTo&\bigsqcup_{b \in \tb{B}(G)_b} \Pi_{[\phi]}^b\\
\uInto&&&\uInto\\
X^*(C_{\phi})&&\rTo&\bigsqcup_{u \in H^1(F,G)} \Pi_{\phi}^u\\
\end{diagram}
In this sense, our work is an extension of theirs. While our construction uses different methods and in particular does not rely on the combinatorial arguments
in \cite{DR09} involving the Bruhat-Tits building, the reader will clearly notice the influence of this paper.

The remaining results concern character identities and rely on the character computations in \cite[\S\S8,9,10,12]{DR09}. These
computations are derived for general $p$-adic groups which split over an unramified extension of $F$, and can be used in our setting. For each $b \in \tb{B}(G)_b$ and $s \in S_{[\phi]}$ we define
\[ \Theta_{[\phi],b}^s = e(G^b)\sum_{\substack{\rho \in X^*(S_{[\phi]})\\\rho \mapsto b}} \rho(s) \Theta_{\pi_\rho}, \]
where $\rho \mapsto \pi_\rho$ is the map \eqref{eq:packs0}, $\Theta_{\pi_\rho}$ is the character of the representation $\pi_\rho$, which we view as a function
on the set of strongly regular semi-simple elements of $G^b(F)$, and $e(G^b)$ is the Kottwitz sign of $G^b$. Our third main result is the stability of the
family
\[ \left\{ \Theta_{[\phi],b}^1 \right\}_{b \in \tb{B}(G)_b} \]
in the following sense: There is a notion of when two semi-simple elements $\gamma^b \in G^b(F)$ and $\gamma^c \in G^c(F)$ for $b,c \in \tb{B}(G)_b$ are stably
conjugate (see Section \ref{sec:eptwists}), and we show that if this is the case, then
\[ \Theta_{[\phi],b}^1(\gamma^b) = \Theta_{[\phi],c}^1(\gamma^c). \]
Our final result is the validity of endoscopic induction for our $L$-packets. Let $(H,s,{^L\eta})$ be an unramified extended endoscopic triple for $G$,
$[\phi^H]$ a Langlands parameter for $H$, and $[\phi]=[^L\eta]\circ[\phi^H]$. Associated to the bijection \eqref{eq:packs0} there is a compatible family of
normalizations $\Delta^{G^b}_H$ of the transfer factors for all pairs $(G^b,H)$ with $b \in \tb{B}(G)_b$. We define the endoscopic lift to $G^b(F)$ of the
stable character corresponding to $[\phi^H]$ by
\[ \tx{Lift}^{G^b}_H\Theta_{[\phi^H],0}^1(\gamma^b) := \sum_{\gamma^H} \Delta^{G^b}_H(\gamma^H,\gamma^b)
\frac{D^H(\gamma^H)^2}{D^{G^b}(\gamma^b)^2}\Theta_{[\phi^H],0}^1(\gamma^H), \]%
where $\gamma^H$ runs over the stably conjugacy classes of strongly regular semi-simple elements of $H(F)$ and $D$ are the usual Weyl discriminants. We show
that
\[ \tx{Lift}^{G^b}_H\Theta_{[\phi^H],0}^1 = \Theta_{[\phi],b}^s. \]

We will now briefly describe the contents of this paper. In section \ref{sec:eptwists} we lay out Kottwitz's ideas on using isocrystals to rigidify inner forms
and obtain good notions of rational and stable conjugacy of elements lying in different inner forms, as well as compatible normalizations of transfer factors
across inner forms. We call the rigidified inner forms extended pure inner forms, or ep forms for short. The set $\tb{B}(G)_b$ parameterizes the equivalence
classes of ep forms. We want to be able to work with actual ep forms, rather than just equivalence classes, and we introduce for that purpose the set $E(G)$.
It is the analog of the set $Z^1(F,G)$ of pure inner forms. Section \ref{sec:eptwists} is valid for any connected reductive p-adic group. Section \ref{sec:clp} is devoted to the $L$-packets. In \ref{sec:reps} we review the
representations that constitute our packets. These representations are very well known and we limit ourselves to just gathering the properties that will be
important for us. For an exposition on their construction, we refer the reader to \cite[\S4.4]{DR09}. In \ref{sec:parms} we introduce the Langlands parameters for our packets. They are the same as the ones considered
in \cite{DR09}. The
actual construction of the packets and their parameterization is carried out in \ref{sec:packs}. In order to construct the packet for a given parameter, we
first construct a triple $(S_0,[a],[^Lj])$, which depends on the choice of a hyperspecial vertex $o$. During its construction, we make some auxiliary choices, but in \ref{sec:indep} we show that the properties of the triple
$(S_0,[a],[^Lj])$ make it essentially unique, so none of the auxiliary choices matters, thereby showing the canonicity of our construction. Moreover, we
give in \ref{sec:indep} the formula for the dependence of the bijection \eqref{eq:packs0} on the choice of the hyperspecial vertex $o$. In \ref{sec:oldpacks}
we prove the compatibility of our packets with those constructed by DeBacker and Reeder. Finally \ref{sec:generic} provides some facts needed for the proper
normalization of the transfer factors. Section \ref{sec:stabendo} deals with stability and endoscopic transfer. After
establishing a reduction formula for the unstable character in \ref{sec:schar}, we show stability and endoscopic transfer in \ref{sec:se}. In fact, the
endoscopic transfer follows from the arguments of \cite{Kal09}, after a few key statements have been generalized to our situation. Most notably, these
statements include the reduction formula from \ref{sec:schar}, and the sign computation from \ref{sec:seprep}. After that, the argument of \cite[\S7]{Kal09}
goes through verbatim, and we refer the reader to it. While the stability of our packets is a special case of their endoscopic transfer, the argument for
this special case is significantly simpler and shorter, and so we give a direct proof for it.

In order to derive our results, we have to impose some restrictions on the field $F$. For the construction of our packets in section \ref{sec:clp}, we
assume that $p$ is odd. In section \ref{sec:stabendo}, we impose the same restrictions on $F$ as in \cite[\S12]{DR09} and \cite[\S3]{Kal09}.
Moreover, we require in this section that the center of $G$ be connected. This last requirement is not strictly necessary, but considerably simplifies the proof of
Proposition \ref{pro:tf}. The validity of this proposition in general can be deduced via reduction to the case of a connected center, which will be done in the
forthcoming paper mentioned above.

We would like to mention that the constructions in Section \ref{sec:clp} are not limited to the depth-zero case. They apply equally well in situations of positive depth, and we can show that they provide a generalization to non-pure inner forms of the packets constructed in \cite{Ree08} as well. We will address this in a future paper, once the stability of these packets, which is a work in progress by DeBacker and Spice, has been established.

\tb{Acknowledgements:} The author is indebted to Robert Kottwitz for the generosity and kindness with which he has shared his knowledge and insights. The author also thanks Stephen DeBacker for the encouragement to work on this problem and for helpful conversations. Finally, the support of the National Science Foundation%
\footnote{This material is based upon work supported by the National Science Foundation under agreement No. DMS-0635607. Any opinions, findings and conclusions or recommendations expressed in this material are those of the author and do not necessarily reflect the views of the National Science Foundation.}
is gratefully acknowledged.

\newpage
\tableofcontents

\section{Notation and conventions}

Throughout this paper, $F$ denotes a $p$-adic field, i.e. a finite extension of $\Q_p$, with ring of integers $O_F$ and residue field $k_F$ of cardinality $q_F$. We fix an algebraic closure $\ol{F}$ of $F$, and let $\Gamma$ denote the absolute Galois group of $F$, $W_F$ the Weil group, and $I_F$ the inertia
subgroup. We fix a uniformizer $\pi \in O_F$, as well as an element $\Phi \in \Gamma$ whose inverse induces the map $x \mapsto x^{q_F}$ on $\ol{k_F}$. Let
$F^u$ be the maximal unramified extension of $F$ in $\ol{F}$, $L$ the completion of $F^u$, and $\ol{L}$ a fixed algebraic closure of $L$, which we take to contain
$\ol{F}$. Recall that $\ol{L} \cong L \otimes_{F^u} \ol{F}$. The diagonal action of $\Gamma$ on the right hand side of this isomorphism is well-defined and in
this way we obtain a continuous action of $\Gamma$ on $\ol{L}$ whose fixed field is $F$.

If $G$ is a connected reductive group defined over $F$, we will use the Fraktur letter $\mf{g}$ for its Lie-algebra. If $T \subset G$ is a maximal torus (by which we mean that it is
defined over $F$), we write $R(T,G)$ for the set of roots of $T$ in $G$, $N(T,G)$ for the normalizer and $\Omega(T,G)$ for the Weyl group of $T$ in $G$. If $B
\subset G$ is a Borel containing $T$ (again defined over $F$), then $R(T,B)$ is a choice of positive roots inside $R(T,G)$ and we write $\Delta(T,B)$ for the
corresponding set of simple roots. In this case, $T$ is a minimal Levi of $G$, and we will call it also a quasi-split maximal torus of $G$. If $T_1,T_2$ are two maximal tori
in $G$ and  $\tx{Ad}(h) : T_1 \rw T_2$ with $h \in G(\ol{F})$ is an isomorphism, then the dual of this isomorphism will be denoted by $\hat{\tx{Ad}}(h) :
\hat{T_2} \rw \hat{T_1}$. The subset of strongly regular semi-simple elements of $G$ will be denoted by $G_\tx{sr}$. We will write $Z^1(F,G)$ or
$Z^1(\Gamma,G)$ for the set of continuous 1-cocycles of $\Gamma$ in the discrete group $G(\ol{F})$, and we will write $Z^1(W_F,G(\ol{L}))$ for the set of
continuous 1-cocycles of $W_F$ in the discrete group $G(\ol{L})$. In \cite{Kot97}, Kottwitz defines a subset $Z^1(W_F,G(\ol{L}))_b$ of "basic" 1-cocycles. If
$u \in Z^1(F,G)$ is an unramified cocycle, we will abuse notation and use the letter $u$ also for the value of $u$ at $\Phi$. Given an element $g \in G$, we
will write $\tx{Cent}(g,G)$ or $G^g$ for the centralizer of $g$ in $G$, and $G_g$ for its connected component.

We will write $\mc{B}^\tx{red}(G,F)$ for the reduced building of $G(F)$, and $\mc{A}^\tx{red}(T,F)$ for the reduced apartment of any maximal split torus $T
\subset G$. For any $x \in \mc{B}^\tx{red}(G,F)$, we let $G(F)_x$ be the fixator of $x$ for the action of $G(F)$ on $\mc{B}^\tx{red}(G,F)$, and $G(F)_{x,0}$
resp. $G(F)_{x,0+}$ be the parahoric subgroup corresponding to $x$ resp. its unipotent radical. If $T$ is an unramified maximal torus of $G$, then it is a
maximal split torus in $G \times F^u$, and hence we have the apartment $\mc{A}^\tx{red}(T,F^u)$ inside $\mc{B}^\tx{red}(G,F^u)$. In this situation, we will
denote $\mc{A}^\tx{red}(T,F^u) \cap \mc{B}^\tx{red}(G,F)$ by $\mc{A}^\tx{red}(T,F)$, even though this will in general not be an apartment in
$\mc{B}^\tx{red}(G,F)$.

If $A$ is a finite abelian group, we write $A^D = \tx{Hom}(A,\C^\times)$. More generally if $A$ is a complex diagonalizable group, we will write $A^D=X^*(A)$.

\section{Extended pure inner twists} \label{sec:eptwists}

\subsection{Definition and basic properties}

We recall some basic facts about isocrystals with additional structure and fix some more notation to be used throughout the rest of the paper. Let $A$ be a connected reductive group defined over $F$. To any element $z \in Z^1(W_F,A(\ol{L}))$ one can define \cite[\S 3.2]{Kot97} a homomorphism of $L$-groups
\[ \nu_z : \mb{D} \rw A, \]
where $\mb{D}$ is the pro-diagonalizable group whose character group is the trivial $\Gamma$-module $\Q$. The conjugacy class of $\nu_z$ depends only on the cohomology class of $z$. We can use $\nu_z$ to define two distinguished subsets of
\[ \tb{B}(A) = H^1(W_F,A(\ol{L})). \]
First, we can consider the set of those classes $[z]$ whose corresponding maps $\nu_z$ are trivial. It is shown in \cite[\S 3.2]{Kot97} that this is precisely the image of the natural injection
\[ H^1(F,A(\ol{F})) \rw \tb{B}(A). \]
Next, we can consider the set of all $z \in Z^1(W_F,A(\ol{L}))$ for which $\nu_z$ factors through $Z(A)$. Those $z$ are called \emph{basic}, their set is denoted by $Z^1(W_F,A(\ol{L}))_b$ and the set of their cohomology classes by $\tb{B}(A)_b$.

Kottwitz has shown that each basic element $z \in Z^1(W_F,A(\ol{L}))$ gives rise to an inner form of $A$. We'd like to recall this process in a slightly different form. Let $E(A)$ be the following pull-back
\begin{diagram}[height=25pt,midshaft,LaTeXeqno] \label{dia:e(a)}
E(A)&&\rTo^{p_2}&Z^1(W_F,A(\ol{L}))_b\\
\dTo<{p_1}&&&\dTo\\
Z^1(\Gamma,A_\tx{ad}(\ol{F}))&&\rInto&Z^1(W_F,A_\tx{ad}(\ol{L}))_b
\end{diagram}

An extended pure inner twist (ep twist) $(\psi,b) : A \rw B$ consists of a a connected reductive group $B$ defined over $F$, an isomorphism $\psi : A \rw B$ of
$\ol{F}$-groups and an element $b \in E(A)$ such that
\[ \psi^{-1}\sigma(\psi) = \tx{Ad}(p_1b(\sigma)) \qquad\forall \sigma \in \Gamma.\]
If $(\psi,b) : A \rw B$ and $(\phi,c) : B \rw C$ are ep twists, then we can form
\[ (\psi,b)^{-1} : B \rw A,\qquad (\phi,c) \circ (\psi,b) : A \rw C \]
by $(\psi,b)^{-1}=(\psi^{-1},\psi(b)^{-1})$ and $(\phi,c)\circ(\psi,b) = (\phi\circ\psi,\psi^{-1}(c)b)$ where multiplication and inversion of elements of $E$
is to be taken component- and pointwise. Two ep twists $(\psi_i,b_i) : A \rw B$ are called equivalent if there exists an element $(x,y) \in A_\tx{ad}(\ol{F})
\times_{A_\tx{ad}(\ol{L})} A(\ol{L})$ such that
\[(\psi_1,b_1)^{-1}\circ(\psi_2,b_2) = (\tx{Ad}(x),(x^{-1}\sigma(x),y^{-1}\sigma(y))). \]
The twist $(\phi,b)$ is called strongly trivial if $b=1$, and trivial if it is equivalent to a strongly trivial twist.

\begin{fct}\ \\[-20pt]
\begin{enumerate}
\item The map $E(A) \rw B(A)_b$ induced by $p_2$ is surjective.
\item If $Z(A)$ is connected, then the map $E(A) \rw H^1(F,A_\tx{ad})$ induced by $p_1$ is surjective.
\end{enumerate}
\end{fct}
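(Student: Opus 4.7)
The plan is to prove (1) by transferring the question to $A_\tx{ad}$, where the vanishing of the center forces basic classes to coincide with ordinary Galois cohomology classes, and then lifting back to $A$ using surjectivity of $A(\ol L) \rw A_\tx{ad}(\ol L)$. Part (2) will then follow by combining (1) with the surjectivity of the natural map $\tb{B}(A)_b \rw \tb{B}(A_\tx{ad})_b$, which I plan to extract from Kottwitz's classification of basic classes by characters of $Z(\hat A)^\Gamma$.

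For (1), I would start with $[b] \in \tb{B}(A)_b$ and push it to $[\ol b] \in \tb{B}(A_\tx{ad})_b$. Since $Z(A_\tx{ad})$ is trivial, $\nu_{\ol b}$ factors through the trivial group, and the discussion at the start of the subsection says $[\ol b]$ lies in the image of the injection $H^1(F,A_\tx{ad}) \hrw \tb{B}(A_\tx{ad})$. Pick a representative $a \in Z^1(\Gamma, A_\tx{ad}(\ol F))$; its inflation to a cocycle on $W_F$ valued in $A_\tx{ad}(\ol L)$ differs from $\ol b$ by a coboundary, say $\ol b(\sigma) = x \cdot \tx{inf}(a)(\sigma) \cdot \sigma(x)^{-1}$ for some $x \in A_\tx{ad}(\ol L)$. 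The map $A \thrw A_\tx{ad}$ is surjective on $\ol L$-points because $\ol L$ is algebraically closed and $Z(A)$ is diagonalizable, so I lift $x$ to $\tilde x \in A(\ol L)$ and replace $b$ by $b'(\sigma) := \tilde x^{-1} b(\sigma) \sigma(\tilde x)$. By construction $[b'] = [b]$ and the image of $b'$ in $A_\tx{ad}(\ol L)$ equals the inflation of $a$, so $(a, b') \in E(A)$ projects to $[b]$.

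For (2), assume $Z(A)$ is connected, equivalently $\hat A_\tx{der}$ is simply connected, so that $\hat A_\tx{sc} = \hat A_\tx{der}$ and $Z(\hat A_\tx{sc}) = Z(\hat A) \cap \hat A_\tx{der}$ is a closed subgroup of $Z(\hat A)$. Under Kottwitz's isomorphisms $\tb{B}(A)_b \cong X^*(Z(\hat A)^\Gamma)$ and $\tb{B}(A_\tx{ad})_b \cong X^*(Z(\hat A_\tx{sc})^\Gamma)$ — with the latter identified with $H^1(F,A_\tx{ad})$ as in (1) — functoriality of Kottwitz's construction identifies the natural map $\tb{B}(A)_b \rw \tb{B}(A_\tx{ad})_b$ with restriction of characters along the closed immersion $Z(\hat A_\tx{sc})^\Gamma \hrw Z(\hat A)^\Gamma$ of complex diagonalizable groups, which is surjective. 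Given $[a] \in H^1(F,A_\tx{ad})$, I lift to $[b] \in \tb{B}(A)_b$, apply (1) to produce $(a', b') \in E(A)$ with $p_2(a', b') \sim b$, and then injectivity of $H^1(F,A_\tx{ad}) \hrw \tb{B}(A_\tx{ad})_b$ forces $[a'] = [a]$.

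The main obstacle I anticipate is the functoriality claim used in (2), namely that under Kottwitz's isomorphisms the natural map $\tb{B}(A)_b \rw \tb{B}(A_\tx{ad})_b$ really is restriction of characters from $Z(\hat A)^\Gamma$ to $Z(\hat A_\tx{sc})^\Gamma$. This requires tracing Kottwitz's parameterization through the quotient $A \rw A_\tx{ad}$ and across dual groups; once this compatibility is in hand, the rest of (2) is a formal diagram chase using (1) and the elementary fact that restriction along a closed immersion of complex diagonalizable groups is surjective on character lattices.
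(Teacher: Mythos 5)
Your argument is correct; part (2) follows the paper's route essentially verbatim (reduce to the surjectivity of $\tb{B}(A)_b \rw \tb{B}(A_\tx{ad})_b$, which under Kottwitz's isomorphisms is dual to the inclusion $Z(\hat A_\tx{sc})^\Gamma \hrw Z(\hat A)^\Gamma$, injective precisely because $Z(A)$ is connected), and the functoriality you flag as the main obstacle is exactly the content of \cite[\S4]{Kot97}, which the paper simply cites. For part (1) you take a genuinely different path. The paper argues directly on cocycles: by Steinberg's theorem one may take an unramified representative $z : \<\Phi\> \rw A(L)$, and by \cite[(4.3.3)]{Kot85} one may further arrange $z(\Phi^n) \in Z(A)$, so the prolongation to $A_\tx{ad}$ kills $\<\Phi^n\>$ and hence descends to a finite unramified Galois group, landing in $Z^1(\Gamma,A_\tx{ad}(\ol{F}))$. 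You instead push the class to $\tb{B}(A_\tx{ad})_b$, invoke functoriality of the Newton map $\nu$ together with the triviality of $Z(A_\tx{ad})$ to place $[\ol b]$ in the image of $H^1(F,A_\tx{ad})$, and then adjust $b$ by a coboundary lifted through the surjection $A(\ol L) \thrw A_\tx{ad}(\ol L)$ so that its image is literally the restriction of a Galois cocycle. Both are valid; the paper's version produces a concrete unramified representative (useful elsewhere in the text), while yours trades the appeal to Steinberg and to \cite[(4.3.3)]{Kot85} for the characterization, already recalled at the start of the subsection, of the classes with trivial Newton point as the image of $H^1(F,\cdot)$ --- a cleaner formal deduction, though the deep input is comparable since that characterization rests on the same circle of results in \cite{Kot97}.
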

\begin{proof} We recall the well-known arguments for the convenience of the reader. Let $[z] \in B(A)_b$. Using Steinberg's theorem we may choose $z$ to be an unramified cocycle $\<\Phi\> \rw A(L)$. Using \cite[(4.3.3)]{Kot85} we may further assume that the value of $z$ at $\Phi^n$, for some suitable integer $n$, belongs to $Z(A)$. Thus the prolongation of $z$ to $A_\tx{ad}$ is trivial on the subgroup of $\<\Phi\>$ generated by $\Phi^n$, which shows that $z$ belongs to $E(A)$.

Assuming now that $Z(A)$ is connected, we need to show that in the diagram
\[ H^1(\Gamma,A_\tx{ad}(\ol{F})) \rw \tb{B}(A_\tx{ad})_b \lw \tb{B}(A)_b \]
the image of the first map is contained in the image of the second map. The first map is a bijection because $Z(A_\tx{ad})=\{1\}$, and the second map is surjective because it is dual to the map $Z(\hat{G}_\tx{sc})^\Gamma \rw Z(\hat G)^\Gamma$ \cite[\S4]{Kot97}, which is injective due to our assumption.
\end{proof}

We can now redo all definitions and arguments of \cite[\S2.1]{Kal09} with the functor $A \mapsto H^1(\Gamma,A(\ol{F}))$  replaced by $A \mapsto \tb{B}(A)_b$. It is straightforward to see that all statements stated there for pure inner twists
remain valid for ep twists: The statement of Fact 2.1.1(1) follows from the next lemma, the statement of Fact 2.1.2(2) is to be interpreted to mean that the new and old definitions of $\tx{inv}$ correspond under the injection $H^1(\Gamma,A_a(\ol{F})) \rw \tb{B}(A_a)_b$, and the other statements remain unchanged.

\begin{lem} Let $a,b \in A(F)$, and let $a_s$ be the semi-simple part of $a$. Put
\begin{eqnarray*}
C(a,b; \ol{L})&=&\{ g \in A(\ol{L})| \tx{Ad}(g)a=b \wedge \forall \sigma \in \Gamma: g^{-1}\sigma(g) \in G_{a_s}(\ol{L}) \},\\
C(a,b; \ol{F})&=&\{ g \in A(\ol{F})| \tx{Ad}(g)a=b \wedge \forall \sigma \in \Gamma: g^{-1}\sigma(g) \in G_{a_s}(\ol{F}) \}.\\
\end{eqnarray*}\ \\[-30pt]
Then $C(a,b;\ol{L}) \neq \emptyset \Leftrightarrow C(a,b;\ol{F}) \neq \emptyset$.
\end{lem}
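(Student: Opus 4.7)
The direction $(\Leftarrow)$ is immediate from the inclusions $A(\ol{F}) \subset A(\ol{L})$ and $G_{a_s}(\ol{F}) \subset G_{a_s}(\ol{L})$. For the non-trivial direction $(\Rightarrow)$, given $g \in C(a,b;\ol{L})$, my plan is to proceed in two stages: first produce some $g_0 \in A(\ol{F})$ with $\tx{Ad}(g_0)a = b$ (momentarily ignoring the cocycle condition), and then modify $g_0$ on the right by a suitable $k \in A^a(\ol{F})$ so that $g_0 k \in C(a,b;\ol{F})$.

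For the first stage, the transporter $T = \{h \in A : hah^{-1} = b\}$ is an $F$-subvariety of $A$ (an $A^a$-torsor) which is non-empty because $g \in T(\ol{L})$; since $\ol{F}$ is algebraically closed, $T(\ol{F}) \neq \emptyset$, supplying $g_0$. Once $g_0$ is fixed, $\sigma(g_0)$ also lies in the transporter (as $a,b$ are $\Gamma$-fixed), so the cocycle $\sigma \mapsto g_0^{-1}\sigma(g_0)$ automatically takes values in $A^a(\ol{F})$. The remaining question is whether we can force these values into $G_{a_s}(\ol{F})$, i.e.\ whether the image of $g_0$ under the quotient $\pi : A^{a_s} \thrw A^{a_s}/G_{a_s}$ onto the finite étale component group can be made $\Gamma$-fixed.

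For the second stage, the hypothesis $g^{-1}\sigma(g) \in G_{a_s}(\ol{L}) = \ker\pi$ says that $\pi(g) \in (A^{a_s}/G_{a_s})(\ol{L})^\Gamma = (A^{a_s}/G_{a_s})(F)$, the last equality because $A^{a_s}/G_{a_s}$ is finite étale over $F$. Setting $y := g_0^{-1}g \in A^a(\ol{L})$, we get $\pi(g_0) = \pi(g)\pi(y)^{-1}$, so it suffices to find $k \in A^a(\ol{F})$ with $\pi(k) = \pi(y)$: then $g_0' := g_0 k$ satisfies $\pi(g_0') = \pi(g) \in (A^{a_s}/G_{a_s})(F)$, and consequently $(g_0')^{-1}\sigma(g_0') \in A^a(\ol{F}) \cap \ker\pi \subset G_{a_s}(\ol{F})$.

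The heart of the argument -- and the only nontrivial ingredient -- is the existence of such a $k$. Since $(A^a)^\circ \subset G_{a_s}$, the composition $A^a \hookrightarrow A^{a_s} \thrw A^{a_s}/G_{a_s}$ factors through the finite étale $F$-group $A^a/(A^a \cap G_{a_s})$. For any finite étale $F$-group $H$ one has $H(\ol{F}) = H(\ol{L})$ canonically, and both $A^a(\ol{F}) \thrw (A^a/(A^a \cap G_{a_s}))(\ol{F})$ and its $\ol{L}$-analogue are surjective (as $\ol{F}$ and $\ol{L}$ are algebraically closed). Therefore $A^a(\ol{F})$ and $A^a(\ol{L})$ have the same image in $(A^{a_s}/G_{a_s})(\ol{F}) = (A^{a_s}/G_{a_s})(\ol{L})$, so $\pi(y)$ does lift to $A^a(\ol{F})$, providing the desired $k$.
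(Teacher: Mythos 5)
Your argument is correct in substance but takes a genuinely different route from the paper's. The paper first treats the case where the derived group of $A$ is simply connected: there, by Steinberg's theorem, $A^{a_s}=G_{a_s}$, the cocycle condition is vacuous, and both sets are the $\ol{L}$- and $\ol{F}$-points of the $F$-variety $\{g\in A\mid \tx{Ad}(g)a=b\}$; the general case is then reduced to this one by passing to a $z$-extension $A'\rw A$ and invoking the argument of \cite[3.1.(2)]{Kot82} to transfer stable conjugacy between $A$ and $A'$. You instead attack the component-group obstruction directly: after producing $g_0\in A(\ol{F})$ in the transporter (which is exactly the paper's simply-connected step), you note that the discrepancy $y=g_0^{-1}g$ lies in $A^a(\ol{L})$, that its class in the finite \'etale quotient $A^a/(A^a\cap G_{a_s})$ is also represented by some $k\in A^a(\ol{F})$, and that replacing $g_0$ by $g_0k$ puts it into $C(a,b;\ol{F})$. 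This is more self-contained (no $z$-extensions, no appeal to Kottwitz's lemma), at the cost of handling the component group by hand. One point should be repaired before the argument is airtight: $\pi$ is defined only on $A^{a_s}$, and $g$, $g_0$ do not centralize $a_s$ (they conjugate $a_s$ to $b_s$), so $\pi(g)$ and $\pi(g_0)$ are not literally meaningful. Either rephrase the second stage in terms of the quotient of the transporter $\{h\mid \tx{Ad}(h)a_s=b_s\}$ by the right $G_{a_s}$-action, which is a finite \'etale $F$-scheme whose $\ol{F}$- and $\ol{L}$-points agree $\Gamma$-equivariantly, or simply verify directly that with $z:=k^{-1}y\in G_{a_s}(\ol{L})$ one has $(g_0k)^{-1}\sigma(g_0k)=z\cdot g^{-1}\sigma(g)\cdot\sigma(z)^{-1}\in G_{a_s}(\ol{L})\cap A(\ol{F})=G_{a_s}(\ol{F})$. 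With that adjustment the proof is complete.
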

\pf For the duration of this proof, we will say that $a,b$ are $\ol{L}$-stably conjugate if the first set is non-empty, and $\ol{F}$-stably conjugate if the
second set is non-empty.

Assume first that $A$ has a simply-connected derived group. Then $G^a \subset G^{a_s} = G_{a_s}$ and the second conditions in the definitions of the above sets
are vacuous. The sets themselves are then the sets of $\ol{L}$-points resp. the $\ol{F}$-points of the algebraic variety $\{g \in A| \tx{Ad}(g)a=b \}$. This
variety is defined over $F$ and the statement follows.

We reduce the general case to this by taking a $z$-extension $A' \rw A$. The argument of the proof of \cite[3.1.(2)]{Kot82} shows that $a,b$ are
$\ol{L}$-stably conjugate if and only if there exist lifts $a',b' \in A'(F)$ which are $\ol{L}$-stably conjugate. Applying the proved special case this is
equivalent to $a',b'$ being $\ol{F}$-stably conjugate, which again by loc. cit. is equivalent to $a,b$ being $\ol{F}$-stably conjugate.\qed

By a representation of an ep twist of $A$ we shall mean a quadruple $(B,\psi,b,\pi)$ where $(\psi,b) : A \rw B$ is an ep twist and $\pi$ is a representation of
$B(F)$. Two such quadruples $(B,\psi,b,\pi)$ and $(B',\psi',b',\pi')$ will be called equivalent if there exists a
strongly-trivial ep twist $B \rw B'$ which is equivalent to $\psi'\psi^{-1}$ identifies $\pi$ with $\pi'$. It is easy to check that then every strongly-trivial ep twist $B \rw B'$ which is equivalent to $\psi'\psi^{-1}$ and identifies $\pi$ with $\pi'$

\subsection{Transfer factors} \label{sec:tf}

Let $G$ be a connected reductive group, defined and quasi-split over $F$, $(\psi,b) : G \rw G'$ be a ep twist, and $(H,s,{^L\eta})$ be an extended endoscopic triple for $G$. Fix a normalization $\Delta_G^H$
of the absolute transfer factor for $(G,H)$. We are going to define a normalization $\Delta_{G'}^H$ of the absolute transfer factor for $(G',H)$ as follows.
Let $\gamma' \in G'(F)$ and $\gamma^H \in H(F)$ be a pair of related strongly $G$-regular elements. Choose any $\gamma \in G(F)$ stably-conjugate to $\gamma'$,
and let $\phi_{\gamma,\gamma^H} : T_\gamma \rw T^H_{\gamma_H}$ be the unique admissible isomorphism from the centralizer of $\gamma$ in $G$ to the centralizer
of $\gamma^H$ in $H$. In \cite[\S2]{Kot85}, Kottwitz constructs an isomorphism $X_*(T_{\gamma})_\Gamma \rightarrow \tb{B}(T_{\gamma})$, and this isomorphism
provides a pairing
\[ \langle\ ,\ \rangle : \tb{B}(T_{\gamma}) \times \hat{T_{\gamma}}^\Gamma \rw \C^\times. \]
We put
\[ \Delta_{G'}^H(\gamma^H,\gamma') = \Delta_G^H(\gamma^H,\gamma) \langle \tx{inv}(\gamma,\gamma'), \hat\phi_{\gamma,\gamma^H}(s) \rangle^{-1}. \]
The same proof as for \cite[Lemma 2.2.1]{Kal09} shows that $\Delta_{G'}^H$ is well-defined. Moreover, using \cite[2.7]{Kot85} and \cite[Lemma 2.3.2]{Kal09} one
sees that if $(\psi,b)$ is a pure inner twist, then this definition agrees with the one given in \cite[\S2.2]{Kal09}.

\begin{pro} \label{pro:tf} Assume that $Z(G)$ is connected. Then $\Delta_{G'}^H$ is an absolute transfer factor for $(G',H)$.
\end{pro}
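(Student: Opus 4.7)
The plan is to verify the Langlands--Shelstad cocycle condition for $\Delta_{G'}^H$ by adapting the pure-inner-twist argument of \cite[\S2.2]{Kal09} to the ep setting. Well-definedness of $\Delta_{G'}^H$ (independence of the auxiliary choice of $\gamma \in G(F)$ stably conjugate to $\gamma'$) has already been granted in the paragraph preceding the proposition, so what remains is to show that for two pairs of related strongly $G$-regular elements $(\gamma_i^H, \gamma_i')$, $i=1,2$, with auxiliary lifts $\gamma_i \in G(F)$ (existence of which is given by the Lemma of the previous section), the ratio $\Delta_{G'}^H(\gamma_1^H, \gamma_1')/\Delta_{G'}^H(\gamma_2^H, \gamma_2')$ coincides with the canonical Langlands--Shelstad relative transfer factor for $(G', H)$.

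Expanding the definition, the ratio decomposes as
\[ \frac{\Delta_G^H(\gamma_1^H, \gamma_1)}{\Delta_G^H(\gamma_2^H, \gamma_2)} \cdot \frac{\langle \text{inv}(\gamma_1, \gamma_1'), \hat\phi_{\gamma_1, \gamma_1^H}(s) \rangle^{-1}}{\langle \text{inv}(\gamma_2, \gamma_2'), \hat\phi_{\gamma_2, \gamma_2^H}(s) \rangle^{-1}}. \]
The first factor is the canonical relative transfer factor for the quasi-split pair $(G, H)$. Since the canonical relative transfer factor for $(G', H)$ differs from that for $(G, H)$ by a $\Delta_{III}$-type correction capturing the cohomological discrepancy between the classes of $(\gamma_1,\gamma_2)$ inside $G(F)$ and $(\gamma_1',\gamma_2')$ inside $G'(F)$, the task reduces to identifying the second factor with this $\Delta_{III}$ correction.

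For pure inner twists, this identification is carried out in \cite[\S2.2]{Kal09} using the Tate--Nakayama pairing on $H^1(F, T_{\gamma_i})$. In the present ep setting one uses instead the Kottwitz pairing on $\mathbf{B}(T_{\gamma_i})$ from \cite[\S2]{Kot85}. For a torus $T$, the natural map $H^1(F,T) \to \mathbf{B}(T)_b$ is injective and the Kottwitz pairing pulls back to Tate--Nakayama, so the torus-level computation from \cite{Kal09} applies verbatim once the ambient pairing is replaced. The connectedness of $Z(G)$ is used here both to control how $\text{inv}(\gamma, \gamma')$ lies inside the pullback $E(T_\gamma)$ and to guarantee the surjectivity properties established in the Fact preceding the Lemma, which together align the ep and pure-inner-form pictures at the level of the maximal torus.

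The main obstacle will be the careful bookkeeping required to match the Kottwitz pairing expression $\langle \text{inv}(\gamma, \gamma'), \hat\phi_{\gamma,\gamma^H}(s) \rangle^{-1}$ with the Langlands--Shelstad $\Delta_{III}$ factor for the inner form $G'$ (as formulated by Kottwitz--Shelstad for general inner forms); this is where the connectedness hypothesis enters most essentially, since it makes the relevant map $Z(\hat G_{\mathrm{sc}})^\Gamma \to Z(\hat G)^\Gamma$ injective and permits the needed identifications between the $E$-picture and the Galois-cohomological picture used in the LS formulas. Once this identification is in place, the cocycle relation for $\Delta_{G'}^H$ follows by combining the cocycle relation for $\Delta_G^H$ with the intrinsic cocycle property of $\Delta_{III}$, yielding the claim.
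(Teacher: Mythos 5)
Your reduction of the problem to showing that the ratio of correction terms equals the Langlands--Shelstad term $\langle \tx{inv}(\gamma_1,\gamma_1'/\gamma_2,\gamma_2'),s_U\rangle$ is indeed the first step of the paper's proof (borrowed from \cite[Proposition 2.2.2]{Kal09}), but the heart of the argument is missing. You claim that since $H^1(F,T)\rw\tb{B}(T)$ is injective and the Kottwitz pairing restricts to Tate--Nakayama there, the torus-level computation of \cite{Kal09} ``applies verbatim once the ambient pairing is replaced.'' This cannot work as stated: in the ep setting the classes $\tx{inv}(\gamma_i,\gamma_i')\in\tb{B}(T_i)$ are in general \emph{not} in the image of $H^1(F,T_i)$ --- that is precisely the point of passing to non-pure inner forms --- so there is nothing to pull back to Galois cohomology, while the relative invariant $\tx{inv}(\gamma_1,\gamma_1'/\gamma_2,\gamma_2')$ lives in $H^1(\Gamma,U)$ for the Langlands--Shelstad torus $U$. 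The two sides of the desired identity are therefore pairings on genuinely different objects, and the actual content of the proof is to build a bridge between them: one forms the push-out $V$ of $T_1 \leftarrow Z(G) \rw T_2$, shows that the canonical map $[T_1]_\tx{sc}\times[T_2]_\tx{sc}\rw V$ factors through $U$, checks that $\tx{inv}(\gamma_1,\gamma_1'/\gamma_2,\gamma_2')$ and $(\tx{inv}(\gamma_1,\gamma_1')^{-1},\tx{inv}(\gamma_2,\gamma_2'))$ have the same image in $\tb{B}(V)$, and then must exhibit an element $s_V\in\hat V^\Gamma$ mapping to $(\hat\phi_{\gamma_1,\gamma_1^H}(s),\hat\phi_{\gamma_2,\gamma_2^H}(s))$ under $\hat V\rw\hat T_1\times\hat T_2$ and to $s_U$ under $\hat V\rw\hat U$; verifying the latter requires an explicit description of $\hat V\rw\hat U$ in terms of the presentation $\hat U=[\hat T_1]_\tx{sc}\times[\hat T_2]_\tx{sc}/Z(\hat G_\tx{sc})$. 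Only after all of this does the functoriality of the Kottwitz pairing \cite[2.7]{Kot85} yield equation \eqref{eq:delta_1}. None of these steps appear in your sketch, and they are not routine bookkeeping.

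You have also located the connectedness hypothesis in the wrong place. The injectivity of $Z(\hat G_\tx{sc})^\Gamma\rw Z(\hat G)^\Gamma$ is what gives the surjectivity of $E(G)\rw H^1(F,G_\tx{ad})$ in the Fact of Section \ref{sec:eptwists}; it is not what drives this proposition. Here the connectedness of $Z(G)$ is used so that the push-out $V$ over $Z(G)$ is a torus whose dual is the fiber product of $\hat T_1\rw\hat G/\hat G_\tx{der}\lw\hat T_2$, which is exactly where the element $s_V$ is constructed as the image of $s$.
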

\pf For $i=1,2$, let $\gamma_i' \in G'(F)$ and $\gamma^H_i \in H(F)$ be a pair of strongly $G$-regular related elements, and let $\gamma_i \in G(F)$ be stably
conjugate to $\gamma_i'$. As in the proof of \cite[Proposition 2.2.2]{Kal09} one reduces to showing that the following equality holds:
\begin{equation} \label{eq:delta_1}
\frac{\langle\tx{inv}(\gamma_1,\gamma_1'),\hat\phi_{\gamma_1,\gamma_1^H}(s)\rangle^{-1}}{\langle\tx{inv}(\gamma_2,\gamma_2'),\hat\phi_{\gamma_2,\gamma_2^H}(s)\rangle^{-1}}=
   \left\langle \tx{inv}\left(\frac{\gamma_1,\gamma_1'}{\gamma_2,\gamma_2'}\right),s_U \right\rangle.
\end{equation}
We put $T_i = \tx{Cent}(\gamma_i,G)$, $T_i^H = \tx{Cent}(\gamma_i^H,H)$, and form the push-out diagrams
\begin{diagram}[height=20pt]
Z(G)&\rTo&T_1&&Z(G_\tx{sc})&\rTo&[T_1]_\tx{sc}\\
\dTo&&\dTo&&\dTo&&\dTo\\
T_2&\rTo&V&&[T_2]_\tx{sc}&\rTo&U
\end{diagram}
The canonical map
\[ [T_1]_\tx{sc} \times [T_2]_\tx{sc} \rightarrow T_1 \times T_2 \rightarrow V \]
factors uniquely through the isogeny $[T_1]_\tx{sc} \times [T_2]_\tx{sc} \rightarrow U$ to give a map $U \rightarrow V$. One sees easily that the image of
$\tx{inv}(\gamma_1,\gamma_1'/\gamma_2,\gamma_2')$ under the map $H^1(\Gamma,U) \rw H^1(\Gamma,V) \rw \tb{B}(V)$ coincides with the image of
$(\tx{inv}(\gamma_1,\gamma_1')^{-1},\tx{inv}(\gamma_2,\gamma_2'))$ under the canonical map $\tb{B}(T_1) \times \tb{B}(T_2) \rw \tb{B}(V)$. Equation
\eqref{eq:delta_1} will follow from \cite[2.7]{Kot85} and the functoriality of the Tate-Nakayama pairing once we exhibit an element $s_V \in \hat V^\Gamma$,
whose image under $\hat V \rw \hat U$ equals $s_U$, and whose image under $\hat V \rw \hat T_1 \times \hat T_2$ equals
$(\hat\phi_{\gamma_1,\gamma_1^H}(s),\hat\phi_{\gamma_2,\gamma_2^H}(s))$.

By Langlands duality we obtain a presentation of $\hat V$ as the pull-back of the dual diagram
\[ \hat T_1 \rightarrow \hat G/\hat G_\tx{der} \leftarrow \hat T_2. \]
The image of $s$ under
\begin{diagram}[midshaft]
Z(\hat H)&&&\rTo^{\Delta}&\hat{T^H_1} \times \hat{T^H_2}&&&\rTo^{\hat\phi_{\gamma_1,\gamma_1^H},\hat\phi_{\gamma_2,\gamma_2^H}}&\hat{T_1} \times \hat{T_2}
\end{diagram}
belongs to $\hat{V}$, call it $s_V$. Consider the diagrams
\begin{center}
\begin{diagram}[height=20pt,inline]
[T_1]_\tx{sc} \times [T_2]_\tx{sc}&\rTo&T_1 \times T_2\\
\dTo&&\dTo\\
U&\rTo&V
\end{diagram}\hspace{20pt}
\begin{diagram}[height=20pt,inline]
\hat{[T_1]}_\tx{ad} \times \hat{[T_2]}_\tx{ad}&\lTo&\hat{T_1} \times \hat{T_2}\\
\uTo&&\uTo\\
\hat{U}&\lTo&\hat{V}
\end{diagram}
\end{center}

The map $U \rw V$ is characterized uniquely by the property that it makes the left diagram commutative, hence its dual is the unique map $\hat V \rw \hat U$
making the right diagram commutative. Explicitly, this map is given as follows. Recall from \cite[\S3.4]{LS87} that $\hat{U} = [\hat{T_1}]_\tx{sc} \times
[\hat{T_2}]_\tx{sc} / Z(\hat{G}_\tx{sc})$ where $Z(\hat{G}_\tx{sc})$ is embedded diagonally into $[\hat{T_1}]_\tx{sc} \times [\hat{T_2}]_\tx{sc}$. Let
$(t_1,t_2) \in \hat{T_1} \times \hat{T_2}$ be an element of $\hat{V}$. Thus $t_1$ and $t_2$ have the same image in $\hat G/\hat G_\tx{der}$. Choose $z \in
Z(\hat G)$ mapping to that image. Then $(z^{-1}t_1,z^{-1}t_2)$ belongs to $[\hat{T_1}]_\tx{sc} \times [\hat{T_2}]_\tx{sc}$. The image of
$(z^{-1}t_1,z^{-1}t_2)$ in $\hat{U}$ is independent of the choice of $z$. The map $\hat{V} \rightarrow \hat{U}$ obtained in this way obviously makes the right
diagram commute and thus must be the dual of $U \rw V$. From the explicit description of the map $\hat V \rw \hat U$ one reads off that the image of $s_V$
equals $s_U$, and this concludes the proof of the proposition. \qed

\section{Construction of packets} \label{sec:clp}

Throughout this section, we assume that $p$ is odd.

\subsection{The representations} \label{sec:reps}

Let $G$ be a connected reductive group defined over $F$. Consider a pair $(T,\theta)$, where $T \subset G$ is an elliptic unramified maximal torus, and $\theta$ is a
character $\theta : T(F) \rw \C^\times$, which is of depth-zero and regular. Recall that a character of $T(F)$ is of depth-zero if its restriction to the pro-p-Sylow subgroup of the maximal compact subgroup of $T(F)$ is trivial, and it is regular if its stabilizer in $\Omega(T,G)$ be trivial.

To such a pair we assign a smooth irreducible supercuspidal representation $\pi_{G,T,\theta}$ of $G(F)$, as was done in \cite[\S4.4]{DR09}. Let us briefly recall its construction. According to \cite[\S2.2]{Deb06}, $x:=\mc{A}^\tx{red}(T,F)$ is a vertex. Let $\ts{G}$ be the connected reductive group defined over $k_F$ associated to this vertex, and $\ts{T}$ be
the maximal torus in $\ts{G}$ corresponding to $T$. The character $\theta$ gives rise to a regular character $\theta : \ts{T}(k_F) \rw \C^\times$. By
\cite[8.3]{DL76} one obtains an irreducible cuspidal representation $\epsilon(\ts{G},\ts{T})R_T^\theta$ of $\ts{G}(k_F)$. Let $\kappa^0$ denote the inflation of this representation to $G(F)_{x,0}$, and put
\[ \pi_{G,T,\theta} = \tx{Ind}_{Z(F)G(F)_{x,0}}^{G(F)_{x}} \left(\theta\otimes\kappa^0\right). \]
According to \cite[Lemma 4.5.1]{DR09}, this representation is irreducible and supercuspidal.

\begin{lem} \label{lem:pairiso}
The representations $\pi_{G,T_1,\theta_1}$ and $\pi_{G,T_2,\theta_2}$ are isomorphic if and only if the pairs $(T_1,\theta_1)$, $(T_2,\theta_2)$ are
$G(F)$-conjugate
\end{lem}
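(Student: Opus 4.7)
The ``if'' direction is clear: if $g \in G(F)$ satisfies $gT_1g^{-1} = T_2$ and $\theta_2 = \theta_1 \circ \tx{Ad}(g^{-1})$, then conjugation by $g$ sends the vertex $x_1 := \mc{A}^\tx{red}(T_1,F)$ to $x_2$, identifies the parahoric subgroups $G(F)_{x_1,0}$ and $G(F)_{x_2,0}$, carries $\ts{T}_1 \subset \ts{G}_1$ to $\ts{T}_2 \subset \ts{G}_2$, matches $\kappa^0_1$ with $\kappa^0_2$, and thus induces an isomorphism of the compact-mod-center inductions.

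For the converse, suppose $\pi_{G,T_1,\theta_1} \cong \pi_{G,T_2,\theta_2}$. Set $K_i := Z(F)G(F)_{x_i}$ and $\sigma_i := \theta_i \otimes \kappa^0_i$. Mackey's intertwining criterion for compact-mod-center induction yields some $g \in G(F)$ for which the restrictions of $\sigma_1$ and ${}^g\sigma_2$ to $K_1 \cap gK_2g^{-1}$ share a nonzero intertwiner. The main obstacle is to deduce from this that $g \cdot x_2 = x_1$. For this, I would exploit that each $\kappa^0_i$ is inflated from a cuspidal representation of $\ts{G}_i(k_F)$: by standard Moy--Prasad/building-theoretic arguments (essentially those of \cite[\S4.5]{DR09}), the image of $G(F)_{x_1,0} \cap gG(F)_{x_2,0}g^{-1}$ in $\ts{G}_1(k_F)$ lies in a proper parabolic subgroup unless $g\cdot x_2 = x_1$, and cuspidality of the Deligne--Lusztig representation rules out any nonzero intertwiner in the former case.

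After replacing $(T_2,\theta_2)$ by its $g$-conjugate, we may assume $x_1 = x_2 = x$, so that both inductions are from the common subgroup $K = Z(F)G(F)_x$. Frobenius reciprocity applied to the restrictions to $G(F)_{x,0}$, together with the irreducibility of $\pi_i$, then forces $\kappa^0_1 \cong \kappa^0_2$ as $G(F)_{x,0}$-representations; descending to the finite reductive quotient $\ts{G}(k_F)$ gives $\epsilon(\ts{G},\ts{T}_1)R_{\ts{T}_1}^{\theta_1} \cong \epsilon(\ts{G},\ts{T}_2)R_{\ts{T}_2}^{\theta_2}$. Regularity of $\theta_1,\theta_2$ combined with the Deligne--Lusztig orthogonality relations from \cite{DL76} forces the pairs $(\ts{T}_1,\theta_1)$ and $(\ts{T}_2,\theta_2)$ to be $\ts{G}(k_F)$-conjugate. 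Any such conjugation lifts to an element of $G(F)_{x,0}$ conjugating $(T_1,\theta_1)$ to the appropriate pair --- using that the $T_i$ are the unique unramified tori with given reductions and that depth-zero characters descend from the quotient --- and composition with $g$ yields the required $G(F)$-conjugacy. \qed
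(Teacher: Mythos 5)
Your proposal is correct and follows essentially the same route as the paper: the paper's proof simply cites \cite[3.5]{MP96} for the associativity of the depth-zero minimal $K$-types (the step you rederive via Mackey theory and cuspidality), then \cite[6.8]{DL76} for the conjugacy of the pairs $(\ts{T}_i,\theta_i)$ over $k_F$, and finally \cite[2.2.2]{Deb06} to lift that conjugacy to a $G(F)$-conjugacy of $(T_i,\theta_i)$. Your sketch expands each of these citations into the underlying argument but introduces no new ideas or gaps.
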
\ \\[-20pt]
\pf Conjugate pairs give rise to conjugate inducing data and hence to isomorphic representations. Conversely assume that $\pi_{G,T_1,\theta_1} \cong
\pi_{G,T_2,\theta_2}$. Let $x,y \in \mc{B}^\tx{red}(G,F)$ be the vertices corresponding to $T_1,T_2$. According to \cite[3.5]{MP96}
the depth-zero unrefined minimal $K$-types $(G(F^u)_{x,0},\kappa^0_1)$ and $(G(F^u)_{y,0},\kappa^0_2)$ are associate, hence there exists $g \in G(F)$ such that
$gx=y$ and $\tx{Ad}(g)\kappa^0_1=\kappa^0_2$. Thus we may assume $x=y$ and $\kappa^0_1=\kappa^0_2$. Using \cite[6.8]{DL76} we find $g \in G(F)$ such that
$\tx{Ad}(g)(\ts{T}_1,\theta_1)=(\ts{T}_2,\theta_2)$. The lemma now follows from \cite[2.2.2]{Deb06}.\qed

\begin{lem} \label{lem:1char} Let $\Theta$ be the character of $\pi_{G,T,\theta}$ as a function on $G_\tx{sr}(F)$.
If $Q_T \in \tx{Lie}(T)(F)$ is a fixed regular semi-simple element, then for any $z \in Z_G(F)$ and $\gamma \in G_\tx{sr}(F)_0$ we have
\[ \Theta(z\gamma) = \epsilon(G,A_G)\theta(z)\sum_{Q} R(G_{\gamma_s},S_Q,1)(\gamma_u)[\phi_{Q_T,Q}]_*\theta(\gamma_s), \]
where $Q$ runs over any set of representatives for the $G_{\gamma_s}(F)$-conjugacy classes inside the $G(F)$-conjugacy class of $Q_T$, and $S_Q =
\tx{Cent}(Q,G)$.
\end{lem}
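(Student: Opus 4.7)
The plan is to derive this character formula from the Frobenius-type formula for compactly induced representations combined with the Deligne--Lusztig character formula on the residue field, then reorganize via topological Jordan decomposition. In essence this is a repackaging of the character computations in \cite[\S\S8--10]{DR09}.

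First I would apply the standard compact induction character formula to
\[ \pi_{G,T,\theta}=\tx{c-Ind}_{Z(F)G(F)_{x,0}}^{G(F)}(\theta\otimes\kappa^0), \]
obtaining
\[ \Theta(z\gamma)=\sum_{\bar h}\tr(\theta\otimes\kappa^0)(hz\gamma h^{-1}), \]
where $\bar h$ ranges over cosets of $G(F)/Z(F)G(F)_{x,0}$ whose representatives conjugate $z\gamma$ into $Z(F)G(F)_{x,0}$. Centrality of $z$ extracts the factor $\theta(z)$ and leaves a sum over conjugates of $\gamma$ landing in $G(F)_{x,0}$.

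Second, invoking the topological Jordan decomposition $\gamma=\gamma_s\gamma_u$ together with the fact that $\kappa^0$ is inflated from $\epsilon(\ts{G},\ts{T})R_\ts{T}^\theta$, I would identify $\tr\kappa^0(h\gamma h^{-1})$ with the Deligne--Lusztig character evaluated at the image of $h\gamma h^{-1}$ in $\ts{G}(k_F)$. The Deligne--Lusztig character formula then expresses this trace as a sum, indexed by rational maximal tori $\ts{S}\subset\ts{G}_{\bar\gamma_s}$ stably conjugate to $\ts{T}$, of the pullback of $\theta$ evaluated at $\bar\gamma_s$ times a Green function in $\bar\gamma_u$.

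Third comes the reindexing step. Using the parametrization recalled in \cite[\S2.2]{Deb06}, rational maximal tori of $\ts{G}_{\bar\gamma_s}$ stably conjugate to $\ts{T}$ biject, by passing to a regular element of the Lie algebra, with $G_{\gamma_s}(F)$-conjugacy classes of $F$-rational maximal tori in $G_{\gamma_s}$ stably conjugate to $T$---precisely the indexing set $\{S_Q\}$. Under this bijection, the admissible isomorphism $\phi_{Q_T,Q}:T\rw S_Q$ identifies the residue field contribution with $[\phi_{Q_T,Q}]_*\theta(\gamma_s)$, while the residue field Green function lifts to the Deligne--Lusztig character $R(G_{\gamma_s},S_Q,1)(\gamma_u)$.

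The main obstacle is the sign bookkeeping: the Kottwitz sign $\epsilon(G,A_G)$ must emerge as the net contribution of the Deligne--Lusztig sign $\epsilon(\ts{G},\ts{T})$ on the residue field together with the signs implicit in the definition of $R(G_{\gamma_s},S_Q,1)$ after converting Green functions on $\ts{G}_{\bar\gamma_s}(k_F)$ to Deligne--Lusztig characters on $G_{\gamma_s}(F)$. This delicate tracking has been carried out in \cite[\S12]{DR09}, and since the representation $\pi_{G,T,\theta}$ considered here is constructed by the same recipe, the bulk of the argument consists of invoking their computation and verifying that each step applies verbatim in the present notation.
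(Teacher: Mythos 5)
Your proposal is correct and follows the same route as the paper, which simply states that the proof is identical to that of \cite[6.2.1]{Kal09} --- itself precisely the unwinding you describe: the Frobenius formula for the compactly induced representation, the Deligne--Lusztig character formula on the reductive quotient, the reindexing of tori by Lie algebra elements $Q$, and the sign bookkeeping from \cite{DR09}. The only minor slips are citational (the relevant character reduction is in \cite[\S\S9,10]{DR09} rather than \S12, and the indexing bijection is with the $G(F)$-conjugacy class of $Q_T$, not the stable class), neither of which affects the argument.
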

\pf The proof is the same as for \cite[6.2.1]{Kal09}.\qed

\subsection{The parameters} \label{sec:parms}

Let $G$ be an unramified connected reductive group defined over $F$, $^LG= \hat G \rtimes W_F$ its $L$-group, and $[\phi] : W_F \rw {^LG}$ an equivalence class of Langlands parameters. If this class
satisfies certain conditions, we are going to construct in the next section a triple $(S_0,[a],[^Lj])$ where $S_0 \subset G$ is an unramified maximal torus,
$[a] : W_F \rw {^LS_0}$ is an equivalence class of Langlands parameters, and $[^Lj] : {^LS_0} \rw {^LG}$ is an $\hat G$-conjugacy class of unramified
$L$-embeddings such that $[^Lj]\circ[a] = [\phi]$. The parameter $[a]$ gives rise to a character $\theta_0 : S_0(F) \rw \C^\times$.

Choose a representative $\phi$ within the equivalence class $[\phi]$, and let $\phi_0 : W_F \rw \hat G$ be the
composition of $\phi$ with the projection to $\hat G$. Then $\phi_0$ is an element of $Z^1(W_F,\hat G)$ whose cohomology class is independent of the choices made. The conditions we are imposing on $[\phi]$ are the following:
\begin{enumerate}
\item $\phi_0$ restricts trivially to the wild inertia subgroup of $W_F$,
\item the centralizer of $\phi_0(I_F)$ in $\hat G$ is a maximal torus,
\item the index of $Z(\hat G)^\Gamma$ in $\tx{Cent}(\phi,\hat G)$ is finite.
\end{enumerate}

These are the Langlands parameters considered in \cite{DR09}. For the construction of the triple $(S_0,[a],[{^Lj}])$, conditions 2 and 3 are enough, thus it is applicable to regular elliptic parameters of positive depth as well. Condition 1 is used in the construction of the representations.

\begin{pro} Assume that $[\phi]$ satisfies conditions 1-3 above, and let $\theta_0$ and $(S_0,[a],[^Lj])$ be as before. Then $\theta_0$ is a regular
character of depth $0$.
\end{pro}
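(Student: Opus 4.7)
The plan is to verify the two properties of $\theta_0$ separately by transporting them across the local Langlands correspondence for tori, which identifies characters of $S_0(F)$ with cohomology classes of parameters $W_F \to {^LS_0}$. Set $T := \hat j(\hat S_0) \subset \hat G$. Under the correspondence, $\theta_0$ is of depth zero precisely when $[a]$ is trivial on the wild inertia subgroup $P_F \subset W_F$, and $\theta_0$ is regular precisely when the stabilizer of $[a]$ under the natural action of $\Omega(T, \hat G)^\Gamma \cong \Omega(S_0, G)^\Gamma$ is trivial.

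For the depth-zero statement, I would use that $^Lj$ is an unramified $L$-embedding, so its restriction to $W_F$ is represented by a section whose cocycle $z: W_F \to \hat G$ is trivial on $I_F$. Writing $\phi(w) = {^Lj}(a(w)) = (\hat j(a_0(w))\,z(w),\, w)$ gives $\phi_0|_{I_F} = \hat j \circ a_0|_{I_F}$. Condition 1 forces $\phi_0|_{P_F} = 1$, and injectivity of $\hat j$ then yields $a_0|_{P_F} = 1$.

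For regularity, I first note that $T \supset \hat j(a_0(I_F)) = \phi_0(I_F)$ (using $z|_{I_F}=1$), so by condition 2 the inclusion $T \subset \tx{Cent}(\phi_0(I_F), \hat G)$ is an equality of maximal tori. Now suppose $\hat w \in \Omega(T, \hat G)^\Gamma$ stabilizes $[a]$. Lift $\hat w$ to $n \in N(T, \hat G)$. The stabilization condition unwinds to produce $t \in T$ such that $g := t^{-1} n$ satisfies $g\, a_0(w)\, w(g)^{-1} = a_0(w)$ for all $w \in W_F$, i.e., $g$ centralizes $a$ pointwise in $^LS_0$. Restricting to $w \in I_F$ and using that $I_F$ acts trivially on $\hat G$ (since $G$ is unramified, so $w(g) = g$), the identity becomes $g\,\hat j(a_0(w))\,g^{-1} = \hat j(a_0(w)) = \phi_0(w)$. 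Hence $g$ centralizes $\phi_0(I_F)$, so by condition 2 we get $g \in T$, and consequently $\hat w = \bar n = \bar g = 1$.

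The main obstacle I expect is the cocycle bookkeeping in the regularity step: correctly turning the cohomology-class stabilization condition into the pointwise centralization identity (i.e., extracting the element $t$), and correctly tracking the interplay of the unramified $L$-embedding $^Lj$ with the Galois action on $\hat G$. Once those conventions are pinned down, the proof reduces to the short computation above, with condition 1 driving depth zero and condition 2 driving regularity; condition 3 appears to play no role here and is presumably used only for the existence and ellipticity properties of the triple $(S_0,[a],[^Lj])$.
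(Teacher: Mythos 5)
Your proposal is correct and follows essentially the same route as the paper: regularity is obtained by observing that stabilization of the class $[a]$ forces pointwise equality of the cocycles on $I_F$ (since inertia acts trivially on the dual of the unramified torus), which condition 2 rules out for nontrivial Weyl elements; depth zero follows from condition 1 together with depth preservation in the Langlands correspondence for unramified tori. Your explicit unwinding of the coboundary and the lift $n$ is just a more detailed version of the paper's one-line argument, and you are right that condition 3 is not used here.
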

\pf Let $w \in \Omega(S_0,G)(F)$. By functoriality of the Langlands correspondence we have $w\theta_0=\theta_0$ if and only if $[a]=[w \circ a]$. If $a$ is any
representative of the class of $a$, then a necessary condition for the second equality is $a|_{I_F}=w\circ a|_{I_F}$ (recall that $S_0$ is unramified). This
however is precluded by condition $2$ on $\phi$, and we conclude that $\theta_0$ is regular. Moreover, since the Langlands correspondence preserves depth,
$\theta_0$ is of depth $0$ (see \cite[\S4]{Ree08} for an explicit version of the Langlands correspondence for unramified tori).
\qed

\subsection{The packets} \label{sec:packs}

We continue with $G$, $^LG$ and $[\phi]$ as in the previous section, and require now that $[\phi]$ satisfies the properties 1-3 described there. For each
$\phi_1 \in [\phi]$, let $S_{\phi_1}$ be the centralizer of $\phi_1$ in $\hat G$ (note that it is abelian). For $\phi_1,\phi_2 \in [\phi]$ there is a canonical isomorphism $S_{\phi_1}
\rw S_{\phi_2}$, and we define $S_{[\phi]} = \underrightarrow\lim S_{\phi_1}$, where $\phi_1$ runs over $[\phi]$. In this section we are going to construct a
set $\Pi_{[\phi]}$ of (equivalence classes of) irreducible admissible representations of ep twists of $G$ and an explicit bijection
\begin{equation} S_{[\phi]}^D \rw \Pi_{[\phi]}. \label{eq:packs} \end{equation}
This bijection is going to depend on a choice of a hyperspecial vertex $o \in \mc{B}^\tx{red}(G,F)$. The left-hand side of the above map is a group and thus
has a distinguished element, namely the identity, while the right-hand side is a-priori just a set. The choice of $o$ serves in particular to fix a base-point
in this set, and different choices will lead to different base-points. The set $\Pi_{[\phi]}$ itself will be independent of the choice of $o$. Moreover, we
will have the diagram
\begin{diagram}[small]
S_{[\phi]}^D&\rTo&\Pi_{[\phi]}\\
\dTo&&\dTo\\
[Z(\hat G)^\Gamma]^D&\rTo&\tb{B}(G)_b
\end{diagram}
where the left vertical map is the dual of the natural inclusion $Z(\hat G)^\Gamma \rw S_{[\phi]}$ and the right vertical map sends an irreducible
representation to the ep twist it lives on.

Fix a hyperspecial vertex $o \in \mc{B}^\tx{red}(G,F)$, and endow $G$ with the corresponding $O_F$-structure. The construction of \eqref{eq:packs} proceeds in
two steps. In the first step, we will explicitly construct a triple $(S_0,[a],[^Lj])$, where $S_0$ is an elliptic maximal torus of $G$ defined over $O_F$,
$[a]$ is an equivalence class of Langlands parameters $a : W_F \rw {^LS_0}$, and $[^Lj]$ is a $\hat G$-conjugacy class of unramified $L$-embeddings $^Lj :
{^LS_0} \rw {^LG}$ such that $[^Lj] \circ [a]=[\phi]$. There will be multiple choices involved in the construction, but we will see in the next section that
such a triple is essentially unique, so there will be no need to keep track of these choices. As already mentioned, this step does not require condition 1 of the Langlands parameter, and can be executed for parameters satisfying only conditions 2 and 3.  In the second step, we will construct to each $\lambda \in
X_*(S_0)$ a quadruple $(G^\lambda,\psi_\lambda,b_\lambda,\pi_\lambda)$, where $(\psi_\lambda,b_\lambda) : G \rw G^\lambda$ is an ep twist and $\pi_\lambda$ is
an irreducible depth-zero supercuspidal representation of $G^\lambda(F)$. In this step, condition 1 of the Langlands parameter is used. We will show that the quadruples associated to $\lambda,\mu \in X_*(S_0)$ are equivalent if and
only $\lambda,\mu$ have the same image in $X_*(S_0)_\Gamma$. The set of equivalence classes of these quadruples will be the $L$-packet $\Pi_{[\phi]}$. It is by
construction in bijection with $X_*(S_0)_\Gamma$, and we obtain \eqref{eq:packs} by composing this bijection with the group isomorphism $S_{[\phi]}^D \rw
X_*(S_0)_\Gamma$ given by $[^Lj]$.

\ul{Step 1}: Let $T \subset G$ be a quasi-split maximal torus such that $o \in \mc{A}(T,F)$. Choose a
$W_F$-invariant maximal torus $\hat T \subset \hat G$, which is in duality with $T$ and is part of a $\Gamma$-invariant splitting of $\hat G$. Choose a
representative $\phi : W_F \rw {^LG}$ of $[\phi]$ so that $\phi_0(I_F) \subset \hat T$, where $\phi_0$ is the composition of $\phi$ with the projection $^LG
\rw \hat G$. Then $\phi_0(W_F) \subset N(\hat G,\hat T)$ and we obtain an unramified cocycle
\[ w : W_F \rw N(\hat G,\hat T) \thrw \Omega(\hat T,\hat G) \cong \Omega(T,G), \]
which extends continuously to $\Gamma$.

\begin{lem} There exists $p_0 \in G(O_{F^u})$ with the properties that the maximal torus $S_0:= \tx{Ad}(p_0)T$ is defined over $O_F$ and
\[ \tx{Ad}(p_0) : T^w \rw S_0 \]
is an isomorphism of $O_F$-tori, where $T^w$ is the twist of the $O_F$-torus $T$ by $w$.
\end{lem}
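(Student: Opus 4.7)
Since $w$ is unramified, it is determined by $w(\Phi) \in \Omega(T,G)(F^u)$, and once $p_0 \in G(O_{F^u})$ is chosen, the condition for $\sigma \in I_F$ is automatic: $\sigma$ fixes $p_0$ so $p_0^{-1}\sigma(p_0) = 1$, which lifts the trivial Weyl element $w(\sigma) = 1$. A direct computation shows that the required $\Gamma$-equivariance of $\tx{Ad}(p_0) : T^w \to S_0$ is equivalent to $p_0^{-1}\sigma(p_0)$ lying in $N(T,G)$ and projecting to $w(\sigma)$ for every $\sigma$. So the problem reduces to finding $p_0 \in G(O_{F^u})$ with $p_0^{-1}\Phi(p_0) = n_\Phi$, where $n_\Phi \in N(T,G)(O_{F^u})$ is a prescribed lift of $w(\Phi)$; such a lift exists because $T$ splits over $O_{F^u}$, so the smooth surjection $N(T,G) \to \Omega(T,G)$ is surjective on $O_{F^u}$-points.

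The existence of $p_0$ is the cohomological form of Lang's theorem. I would first apply Lang--Steinberg to the connected reductive $k_F$-group $\ts{G} = G \otimes_{O_F} k_F$, which gives $\ol{p_0} \in \ts{G}(\ol{k_F})$ with $\ol{p_0}^{-1}\Phi(\ol{p_0}) = \ol{n_\Phi}$. Then, using smoothness of $G$ over $O_F$, I would lift $\ol{p_0}$ to an element of $G(O_{F^u})$ and iteratively correct it, via a Hensel-type argument, so that the equation $p_0^{-1}\Phi(p_0) = n_\Phi$ holds exactly. Equivalently, one invokes the vanishing of $H^1_{\tx{ur}}(F, G)$ together with the integral structure coming from $o$.

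With $p_0$ in hand, both verifications are formal. For $S_0 := \tx{Ad}(p_0)(T)$ being $O_F$-defined, compute
\[
\Phi(S_0) \;=\; \tx{Ad}(\Phi(p_0))(\Phi(T)) \;=\; \tx{Ad}(p_0 n_\Phi)(T) \;=\; \tx{Ad}(p_0)(T) \;=\; S_0,
\]
using $n_\Phi \in N(T,G)$ and $\Phi(T)=T$, while $I_F$-invariance is automatic. For the intertwining property, for $t \in T$ one has
\[
\Phi(\tx{Ad}(p_0)(t)) \;=\; \tx{Ad}(p_0 n_\Phi)(\Phi(t)) \;=\; \tx{Ad}(p_0)\bigl(\tx{Ad}(n_\Phi)\Phi(t)\bigr),
\]
and $\tx{Ad}(n_\Phi)\circ\Phi$ is by construction the twisted Frobenius on $T^w$.

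The main obstacle is integrality: obtaining $p_0$ inside $G(O_{F^u})$ rather than merely $G(F^u)$. This is where the choice of the hyperspecial vertex $o$ is used, since it supplies the smooth $O_F$-model of $G$ needed both to make sense of $n_\Phi$ as an integral lift and to run Lang--Steinberg on the special fiber followed by a smoothness lift to $O_{F^u}$. The rest of the argument is bookkeeping on the cocycle relation.
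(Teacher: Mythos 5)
Your reduction of the lemma to a cocycle condition for $\Phi$ alone, and the two closing verifications, are fine. The genuine gap is in the middle step, where you fix an integral lift $n_\Phi \in N(T,G)(O_{F^u})$ of $w(\Phi)$ and claim to solve $p_0^{-1}\Phi(p_0)=n_\Phi$ with $p_0 \in G(O_{F^u})$ by Lang--Steinberg on the special fiber followed by a ``Hensel-type'' correction. The equation $p^{-1}\Phi(p)=n_\Phi$ is not the vanishing locus of a morphism of smooth $O_{F^u}$-schemes --- it is Frobenius-semilinear --- so Hensel's lemma for the henselian (but non-complete) ring $O_{F^u}$ does not apply to it. The successive-approximation argument does work, but its corrections live at ever deeper congruence levels and possibly in ever larger unramified extensions, so it converges only in $G(O_L)$, with $O_L$ the ring of integers of the completion of $F^u$; it does not produce an element of $G(O_{F^u})$. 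Worse, for an arbitrary lift $n_\Phi$ the equation is genuinely unsolvable in $G(O_{F^u})$: any $p_0 \in G(O_{F^u})$ lies in $G(O_E)$ for some finite unramified $E/F$, which forces the twisted norm $n_\Phi\Phi(n_\Phi)\cdots\Phi^{m-1}(n_\Phi)$, $m=[E:F]$, to equal $1$ --- a condition an arbitrary lift will not satisfy (the degenerate case of a torus, where a lift of the trivial Weyl element is an arbitrary unit, already shows this). Equivalently, $\Phi\mapsto n_\Phi$ need not extend to a continuous cocycle of $\tx{Gal}(F^u/F)$ valued in $G(O_{F^u})$, so the vanishing of the unramified $H^1$ cannot be invoked. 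What the lemma actually requires is weaker: some $p_0 \in G(O_{F^u})$ with $p_0^{-1}\Phi(p_0) \in N(T,G)(O_{F^u})$ projecting to $w(\Phi)$, the lift being allowed to vary by $T(O_{F^u})$. Your rigidification to a prescribed $n_\Phi$ is both unnecessary and, as written, fatal.

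The paper's proof is arranged precisely so that only algebraic (not Frobenius-twisted) equations are solved integrally. It first produces $S_0$ over $O_F$: choose a $k_F$-embedding $\bar\xi : \ts{T}^w \rw \ts{G}$ on the special fiber (this is where Lang's theorem enters), lift it to an $O_F$-embedding $\xi : T^w \rw G$ using the smoothness of the scheme representing $R \mapsto \tx{Hom}_{R\tx{-grp}}(T^w\times R,G\times R)$ from SGA3, and set $S_0=\xi(T^w)$. Only then is $p_0$ obtained, as an $O_{F^u}$-point of the smooth transporter scheme $\{g\in G \mid \tx{Ad}(g)T=S_0\}$ lifting a chosen $\ol{k_F}$-point; since this is an honest smooth scheme over $O_{F^u}$, henselianness suffices and no completion or convergence enters. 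The cocycle $\sigma\mapsto p_0^{-1}\sigma(p_0)$ then automatically lies in $N(T,G)(O_{F^u})$, and its image in the Weyl group is identified with $w$ by reduction to the special fiber via $\Omega(T,G)\cong\Omega(\ts{T},\ts{G})$. You should either restructure your argument along these lines, or else justify both the existence of a lift $n_\Phi$ whose twisted norms vanish at some finite level and the solvability of the resulting finite-level twisted equation in $G(O_E)$ --- neither of which is supplied by your current appeal to Lang--Steinberg and Hensel.
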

\pf We follow the argument of the proof of \cite[Prop. 5.1.10]{BT2}. Write $\ts{G}$ and $\ts{T}$ for the special fibers of $G$ and $T$. The reduction map $G(O_{F^u}) \rw \ts{G}(\ol{k_F})$ provides an isomorphism $\Omega(T,G)
\cong \Omega(\ts{T},\ts{G})$ and the special fiber of $T^w$ is the same as the twist of $\ts{T}$ by the image of $w$ under this map. Choose a $k_F$-embedding
$\bar\xi : \ts{T}^w \rw \ts{G}$. By \cite[exp. 11, Cor 4.2]{SGA3}, the functor from $O_F$-algebras to sets given by $R \mapsto \tx{Hom}_{R\tx{-grp}}(T^w \times R,G \times R)$ is
representable by a smooth $O_F$-scheme, hence the map $\tx{Hom}_{O_F\tx{-grp}}(T^w,G) \rw \tx{Hom}_{k_F\tx{-grp}}(\ts{T}^w,\ts{G})$ is surjective and we can lift
$\bar\xi$ to an $O_F$-homomorphism $\xi : T^w \rw G$, which by \cite[exp. 9, Cor. 2.5 and 6.6]{SGA3} is a closed embedding. Put $S_0 = \xi(T^w)$, and let $\bar p_0 \in \ts{G}(\ol{k_F})$ be such that $\tx{Ad}(\bar p_0) : \ts{T}^w \rw
\ts{S}_0$ is an isomorphism of $k_F$-tori. The scheme $\{g \in G| \tx{Ad}(g)T = S_0 \}$ is smooth over $O_{F^u}$ and hence we can lift $\bar p_0$ to $p_0 \in
G(O_{F^u})$. Then the map $\sigma \mapsto p_0^{-1}\sigma(p_0)$ gives an element of $Z^1(\Gamma,\Omega(T,G))$ which equals $w$, because its image in
$Z^1(\Gamma,\Omega(\ts{T},\ts{G}))$ equals by construction the image of $w$. \qed

Choose unramified $\chi$-data for $R(S_0,G)$. By the construction of \cite[2.6]{LS87} we obtain a $\hat G$-conjugacy class of $L$-embeddings ${^LS}_0 \rw
{^LG}$. Let $^Lj$ be an element of this class with the following properties:
\begin{itemize}
\item $^Lj(\hat S_0) = \hat T$
\item The cocycles
\[ W_F \rTo^{\phi} N(\hat T,\hat G) \rtimes W_F \rOnto \Omega(\hat T,\hat G) \]
and
\[W_F \rTo^{^Lj} N(\hat T,\hat G) \rtimes W_F \rOnto \Omega(\hat T,\hat G) \]
are equal
\end{itemize}
Then the image of $\phi$ is contained in the image of $^Lj$ and thus we obtain a factorization
\begin{diagram}[height=20pt]
^LS_0&\rTo&^LG\\
\uTo<{a}&\ruTo>{\phi}\\
W_F
\end{diagram}

This completes the construction of the triple $(S_0,[a],[^Lj])$. The Langlands parameter $[a]$ gives rise to a character $\theta_0 : S_0(F) \rw \C^\times$.

\ul{Step 2:} Let $\lambda \in X_*(S_0)$. We are going to construct the quadruple $(G^\lambda,\psi_\lambda,b_\lambda,\pi_\lambda)$ as follows: The map $\Phi
\mapsto \lambda(\pi)$ extends to a 1-cocycle $W_F \rw S_0(\ol{L})$. Its prolongation to $G(\ol{L})$ is basic and will be called $b_\lambda$. We claim
$b_\lambda \in E(G)$. Let $E/F$ be a finite unramified extension splitting $S_0$, $\Sigma = \tx{Gal}(E/F)$, $n=|\Sigma|$. Recall that $S_0$ is elliptic, and
thus $X_*(S_0)^\Gamma \subset X_*(Z)^\Gamma$, which implies
\[ b_\lambda(\Phi^n) = N_\Sigma(\lambda(\pi)) = [N_\Sigma(\lambda)](\pi) \in Z(F), \]
where $N_\Sigma$ is the norm map for the action of $\Sigma$ on $X_*(S_0)$.
This shows that the image of $b_\lambda$ in $Z^1(W_F,G_\tx{ad}(\ol{L}))$ factors through $\Sigma$ and thus belongs to the image of
$Z^1(\Gamma,G_\tx{ad}(\ol{F}))$, which proves the claim $b_\lambda \in E(G)$. Now $b_\lambda$ gives rise to the ep twist $(G^\lambda,\psi_\lambda,b_\lambda)$.
Put $S_\lambda := \psi_\lambda(S_0)$. Since $\tx{Ad}(b_\lambda)$ acts trivially on $S_0$, we see that
\[ \psi_\lambda : S_0 \rw S_\lambda \]
is an isomorphism of $F$-tori. Put $\theta_\lambda = [\psi_\lambda]_*\theta_0$, and let $\pi_\lambda$ be the representation
$\pi_{G^\lambda,S_\lambda,\theta_\lambda}$ defined in \ref{sec:reps}.

\begin{lem} The quadruples $(G^\lambda,\psi_\lambda,b_\lambda,\pi_\lambda)$, $(G^\mu,\psi_\mu,b_\mu,\pi_\mu)$ constructed from two elements
$\lambda,\mu \in X_*(S_0)$ are equivalent if and only if  $\lambda$ and $\mu$ have the same image in $X_*(S_0)_\Gamma$.
\end{lem}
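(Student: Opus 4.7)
The plan is to reduce both directions to the assertion that $b_\lambda$ and $b_\mu$ are cohomologous as $1$-cocycles $W_F \rw S_0(\ol{L})$. By Kottwitz's isomorphism $\tb{B}(S_0) \cong X_*(S_0)_\Gamma$, which by the very construction of $b_\nu$ sends $[b_\nu]$ to the image of $\nu$ for every $\nu \in X_*(S_0)$, such a relation in $S_0(\ol{L})$ is equivalent to $\lambda \equiv \mu$ in $X_*(S_0)_\Gamma$.

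For the \emph{if} direction, I would choose $s \in S_0(\ol{L})$ with $b_\lambda(\sigma) = s^{-1} b_\mu(\sigma) \sigma(s)$ and set $\iota := \psi_\mu \circ \tx{Ad}(s) \circ \psi_\lambda^{-1} : G^\lambda \rw G^\mu$. The identities $\psi_\lambda^{-1}\sigma(\psi_\lambda) = \tx{Ad}(b_\lambda(\sigma))$ and its analogue for $\psi_\mu$ yield $\sigma(\iota) = \iota$ immediately, so $\iota$ is $F$-rational and $(\iota,1)$ is a strongly-trivial ep twist. Tracing through the composition formula, the element $s$ --- viewed as an element of $G_\tx{ad}(\ol{F})\times_{G_\tx{ad}(\ol{L})}G(\ol{L})$ via its image in $G_\tx{ad}$ --- supplies an ep-equivalence between $(\iota,1)$ and $(\psi_\mu,b_\mu)\circ(\psi_\lambda,b_\lambda)^{-1}$. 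Since $S_0$ is abelian, $\tx{Ad}(s)$ acts trivially on $S_0$, so $\iota$ restricts on $S_\lambda$ to $\psi_\mu\psi_\lambda^{-1}$ and carries $\theta_\lambda$ to $\theta_\mu$; Lemma \ref{lem:pairiso} then delivers $\iota_*\pi_\lambda \cong \pi_\mu$.

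For the \emph{only if} direction, suppose we are given an equivalence witnessed by an $F$-isomorphism $\iota : G^\lambda \rw G^\mu$ that is ep-equivalent to $(\psi_\mu,b_\mu)\circ(\psi_\lambda,b_\lambda)^{-1}$ and satisfies $\iota_*\pi_\lambda \cong \pi_\mu$. I would first invoke Lemma \ref{lem:pairiso} to produce $g \in G^\mu(F)$ with $\tx{Ad}(g)(\iota(S_\lambda),\iota_*\theta_\lambda) = (S_\mu,\theta_\mu)$; replacing $\iota$ by $\tx{Ad}(g)\iota$ --- which preserves its ep-class because $g \in G^\mu(F)$ --- allows one to assume $\iota(S_\lambda) = S_\mu$ and $\iota_*\theta_\lambda = \theta_\mu$. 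Now $\beta := \psi_\mu^{-1}\iota\psi_\lambda$ is an $F$-automorphism of $S_0$ fixing $\theta_0$; the ep-equivalence forces $\iota$ to differ from $\psi_\mu\psi_\lambda^{-1}$ by an inner automorphism of $G^\mu$ over $\ol{F}$, whence $\beta$ is the restriction of an inner automorphism of $G$ to $S_0$, i.e.\ $\beta \in \Omega(S_0,G)(F)$. Regularity of $\theta_0$ (the proposition in Section \ref{sec:parms}) forces $\beta = 1$. The ep-equivalence witness $(x,y) \in G^\lambda_\tx{ad}(\ol{F}) \times_{G^\lambda_\tx{ad}(\ol{L})} G^\lambda(\ol{L})$ then satisfies $\tx{Ad}(x) = \iota^{-1}\psi_\mu\psi_\lambda^{-1}$, which by the above is trivial on $S_\lambda$; hence $x$ lifts to $S_\lambda(\ol{F})$, and pull-back compatibility forces $y \in S_\lambda(\ol{L})\cdot Z(G^\lambda)(\ol{L}) = S_\lambda(\ol{L})$. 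Transporting $y$ to $S_0(\ol{L})$ via $\psi_\lambda^{-1}$ and unwinding the $p_2$-cocycle identity produces an element $s \in S_0(\ol{L})$ satisfying $b_\lambda(\sigma) = s^{-1}b_\mu(\sigma)\sigma(s)$, as needed.

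The hardest step will be the last one: confining the $p_2$-component $y$ of the ep-equivalence witness to the subtorus $S_\lambda(\ol{L}) \subset G^\lambda(\ol{L})$. This is what upgrades the cohomology relation from the coarser $\tb{B}(G)_b$ to the strictly finer $\tb{B}(S_0)_b$, and it hinges on a two-stage rigidification --- first Lemma \ref{lem:pairiso} to fix both tori on the nose, then the regularity of $\theta_0$ to kill the residual Weyl-group ambiguity in $\iota$.
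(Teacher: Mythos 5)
Your proposal is correct and follows essentially the same route as the paper: both directions hinge on the Kottwitz isomorphism $X_*(S_0)_\Gamma \cong \tb{B}(S_0)$, the reduction via Lemma \ref{lem:pairiso} to identifying the pairs $(S_\lambda,\theta_\lambda)$ and $(S_\mu,\theta_\mu)$, and the regularity of $\theta_0$ to confine the conjugating element to $S_0(L)$ in the converse. The paper's converse is organized slightly more compactly (it writes the strongly trivial twist directly as $(\psi_\mu,b_\mu)\circ(\tx{Ad}(g),g^{-1}\Phi(g))\circ(\psi_\lambda,b_\lambda)^{-1}$ for some $g \in G(L)$ and concludes $g \in S_0(L)$ at once), but your more explicit unwinding of the equivalence witness reaches the same conclusion.
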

\pf By Lemma \ref{lem:pairiso} the two quadruples are equivalent if and only if there exists a strongly-trivial ep twist $G^\lambda \rw G^\mu$ which is
equivalent to $(\psi_\mu,b_\mu)\circ(\psi_\lambda,b_\lambda)^{-1}$ and identifies the pairs $(S_\lambda,\theta_\lambda)$ and $(S_\mu,\theta_\mu)$.

Assume first that $\lambda,\mu$ have the same image in $X_*(S_0)_\Gamma$. Using Kottwitz's isomorphism $X_*(S_0)_\Gamma \rw \tb{B}(S_0)$ we see that there
exists $t \in S_0(L)$ such that $b_\lambda = t^{-1}b_\mu\Phi(t)$. Then
\[ (\psi_\mu,b_\mu)\circ(\tx{Ad}(t),t^{-1}\Phi(t))\circ(\psi_\lambda,b_\lambda)^{-1} \]
is a strongly-trivial twists $G^\lambda \rw G^\mu$ which by construction identifies the pairs $(S_\lambda,\theta_\lambda)$ and $(S_\mu,\theta_\mu)$.

Conversely let $g \in G(L)$ be such that
\[ (\psi_\mu,b_\mu)\circ(\tx{Ad}(g),g^{-1}\Phi(g))\circ(\psi_\lambda,b_\lambda)^{-1} \]
is a strongly-trivial twist which identifies $(S_\lambda,b_\lambda)$ with $(S_\mu,b_\mu)$. This implies that $\tx{Ad}(g)$ leaves the pair $(S_0,\theta_0)$
invariant, which by the regularity of $\theta_0$ is equivalent to $g \in S_0(L)$. Thus $b_\lambda$ and $b_\mu$ have the same image in $\tb{B}(S_0)$, and using
again the Kottwitz isomorphism we conclude that $\lambda,\mu$ have the same image in $X_*(S_0)_\Gamma$.\qed

We let $\Pi_{[\phi]}$ be the set of equivalence classes of quadruples just constructed. Choose any representative $^Lj$ within the given $\hat G$-conjugacy
class, let $\phi = {^Lj}\circ a$ and put $S_\phi = \tx{Cent}(\phi,\hat G)$. The map $^Lj$ identifies $[\hat S_0]^\Gamma$ with $S_\phi$, and hence we obtain a
bijection
\[ S_{[\phi]}^D \rw S_\phi^D \rw X_*(S_0)_\Gamma \rw \Pi_{[\phi]} \]
which is obviously independent of the choice of $^Lj$.

\subsection{Independence of choices and change of base points} \label{sec:indep}

This section has a two-fold purpose. On the one hand, we will see that the $L$-packet $\Pi_{[\phi]}$ is independent of all choices, while the bijection
\eqref{eq:packs} depends only on the $G(F)$-orbit of $o$. On the other hand, given two choices of hyperspecial vertices $o,o' \in \mc{B}^\tx{red}(G,F)$, we will give the precise relationship between the corresponding bijections.

Let $[\phi]$ be a equivalence class of Langlands parameters satisfying conditions 2 and 3 of Section \ref{sec:parms}. Let $\Xi([\phi],o)$ be the set of triples $(S_0,[a],[{^Lj}])$ where
\begin{itemize}
\item $S_0 \subset G$ is an elliptic maximal torus defined over $O_F$,
\item $[a] : W_F \rw {^LS_0}$ is an equivalence class of Langlands parameters, and
\item $[^Lj] : {^LS_0} \rw {^LG}$ is a $\hat G$-conjugacy class of unramified $L$-embeddings,
\end{itemize}
subject to the conditions
\begin{itemize}
\item $[^Lj]\circ [a] = [\phi]$, and
\item the following diagram commutes
\begin{diagram}[small,midshaft]
^LS_0&&&\rTo^{^Lj}&^LG\\
&\rdTo&&\ldTo\\
&&^LZ(G)^\circ
\end{diagram}
where the diagonal arrows are dual to the natural inclusions.
\end{itemize}

The triple constructed in Step 1 of the previous section is an element of $\Xi([\phi],o)$.

If $u$ is an inner automorphism of $G$ defined over $F$, then we get a map
\[ \Xi([\phi],o) \rw \Xi([\phi],uo),\quad (S_0,[a],[^Lj]) \mapsto (u(S_0),[\hat u^{-1}\circ a],[^Lj\circ \hat u]). \]
In particular, the group $G(O_F)$ acts on the set $\Xi([\phi],o)$ by conjugation.

\begin{lem} \label{lem:tripuni} Let $(S_0,[a],[{^Lj}])$ and $(S_0',[a'],[{^Lj'}])$ be two elements of $\Xi([\phi],o)$. Then there exists $h \in G(O_F)$
such that $(S_0',[a'],[{^Lj'}])=\tx{Ad}(h)(S_0,[a],[^Lj])$.
\end{lem}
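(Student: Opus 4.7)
The plan is to split the required conjugation into two stages: first, find $h_1 \in G(O_F)$ with $\tx{Ad}(h_1) S_0 = S_0'$; second, find $h_2 \in N(S_0', G)(O_F)$ transforming the remaining data $([a], [{^Lj}])$ (after the first conjugation) into $([a'], [{^Lj'}])$. The desired element is $h := h_2 h_1$.

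\emph{Stage 1 (matching the tori).} I would begin by showing that $S_0$ and $S_0'$ are stably $F$-conjugate inside $G$. Reviewing Step 1 of \ref{sec:packs}, $S_0$ is obtained by twisting a reference quasi-split torus $T \subset G$ (with $o \in \mc{A}(T,F)$) by the Weyl cocycle $w : W_F \rw \Omega(\hat T, \hat G) \cong \Omega(T, G)$ extracted from $\phi_0$ via the identification $\hat T = {^Lj}(\hat S_0)$, and similarly $S_0'$ by $w'$ extracted via ${^Lj'}$. By condition (2) of \ref{sec:parms}, both $^Lj$ and $^Lj'$ must send their dual tori onto the unique maximal torus $\tx{Cent}(\phi_0(I_F), \hat G) \subset \hat G$, so $w$ and $w'$ agree up to $\Omega$-conjugation. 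Hence $S_0, S_0'$ are stably $F$-conjugate, and reducing mod $\pi$ yields stably $k_F$-conjugate elliptic maximal tori $\ts{S}_0, \ts{S}_0' \subset \ts{G}$, which are $\ts{G}(k_F)$-conjugate by Lang--Steinberg. The transporter $\{g \in G : \tx{Ad}(g) S_0 = S_0'\}$ is smooth over $O_F$ (an $S_0$-torsor over a closed subscheme), so any $k_F$-conjugator lifts to $h_1 \in G(O_F)$. Replacing the first triple by its $\tx{Ad}(h_1)$-image, I may assume $S_0 = S_0'$.

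\emph{Stage 2 (matching the embeddings).} With $S_0 = S_0'$, the $L$-embeddings ${^Lj}, {^Lj'} : {^LS_0} \rw {^LG}$ have a common image $\hat T_0 := \tx{Cent}(\phi_0(I_F), \hat G)$, so ${^Lj'} = \tx{Ad}(n) \circ {^Lj}$ for some $n \in N(\hat T_0, \hat G)$. Equivariance for the intrinsic $W_F$-action on $\hat S_0$ (unambiguous now that $S_0 = S_0'$ as $F$-tori) forces $\bar n \in \Omega(\hat T_0, \hat G)$ to be $W_F$-invariant, and by Langlands duality $\bar n$ corresponds to an element of $\Omega(S_0, G)(F)$. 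Since $S_0$ is an unramified $O_F$-torus, $H^1(O_F, S_0) = 0$ by Lang applied to the special fiber, so the exact sequence of smooth $O_F$-group schemes $1 \rw S_0 \rw N(S_0, G) \rw \Omega(S_0, G) \rw 1$ yields surjectivity $N(S_0, G)(O_F) \thrw \Omega(S_0, G)(O_F) = \Omega(S_0, G)(F)$, and I lift $\bar n$ to $h_2 \in N(S_0, G)(O_F)$. Conjugation by $h_2$ preserves $S_0$ and carries $[{^Lj}]$ to $[{^Lj} \circ \hat{h_2}] = [{^Lj'}]$; the identity ${^Lj}\circ [a] = [\phi] = {^Lj'}\circ [a']$ then forces the corresponding transform of $[a]$ to equal $[a']$.

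\emph{Main obstacle.} The technical heart is Stage 1: verifying that the Weyl cocycle extracted from $\phi$ depends only on $[\phi]$ modulo $\Omega$-conjugacy, so that $S_0$ and $S_0'$ lie in a common stable conjugacy class. This rests on condition (2) of \ref{sec:parms}, which pins down the image torus in $\hat G$ of any admissible $[{^Lj}]$ as the centralizer $\tx{Cent}(\phi_0(I_F), \hat G)$. Once stable conjugacy is established, the descent to the special fiber via Lang--Steinberg, the smoothness of the transporter, and the lifting through $N(S_0, G)$ are all routine.
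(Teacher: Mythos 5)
Your two-stage strategy (first conjugate the tori by an element of $G(O_F)$, then adjust by a rational Weyl element lifted to $N(S_0,G)(O_F)$) is a workable reorganization of the paper's argument, and Stage 1 is essentially sound: the paper achieves the same thing by producing $h \in G(\ol{F})$ from the admissible isomorphism ${^Lj'}^{-1}\circ{^Lj}|_{\hat S_0}$ and then pushing it into $G(O_F)$ using $G(O_{F^u})$-conjugacy of the tori and the vanishing of $H^1(\Gamma,S_0(O_{F^u}))$, whereas you descend through the special fiber and Hensel's lemma.

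The genuine gap is in Stage 2, at the assertion that ${^Lj'} = \tx{Ad}(n)\circ{^Lj}$ for some $n \in N(\hat T_0,\hat G)$ and hence that $[{^Lj}\circ\hat{\tx{Ad}}(h_2)] = [{^Lj'}]$. Two $L$-embeddings of ${^LS_0}$ with the same image, whose restrictions to $\hat S_0$ differ by a Weyl element, still differ on the $W_F$-part — after absorbing that Weyl element — by a $1$-cocycle $c \in Z^1(W_F,\hat S_0)$; at the level of $\hat G$-conjugacy classes this leaves an obstruction in $H^1(W_F,\hat S_0)$, i.e.\ an unramified character $\chi_c$ of $S_0(F)$, which your argument never shows to be trivial. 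Without further input the statement would in fact be false: one can twist $[^Lj]$ by such a cocycle and compensate inside $[a]$, producing a triple with $[^Lj]\circ[a]=[\phi]$ not conjugate to the original. This is precisely where the paper uses the third condition in the definition of $\Xi([\phi],o)$ — the commuting triangle over ${^LZ(G)^\circ}$ — together with \cite[7.1.1]{Kal09}: since $\chi_c$ is unramified and $S_0$ is elliptic, triviality need only be checked on $Z(G)^\circ(F)$, and the triangle condition forces that restriction to be trivial. Your proposal never invokes that condition, which is the telltale sign of the missing step. A smaller point: your last sentence, deducing that the transform of $[a]$ equals $[a']$, needs injectivity of composition with $[{^Lj'}]$ on parameter classes; the paper sidesteps this by fixing representatives with ${^Lj}\circ a = {^Lj'}\circ a'$ at the outset.
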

\pf Choose representatives $a$, $a'$, $^Lj$ and $^Lj'$ within the given conjugacy classes such that $^Lj\circ a={^Lj'}\circ a'$. We claim that the images of
$^Lj$ and $^Lj'$ coincide. On the one hand, we have $^Lj(\hat S_0) = \tx{Cent}({^Lj}\circ a(I_F),\hat G)$ and thus we see that $^Lj(\hat{S_0}) =
{^Lj'}(\hat{S_0'})$. On the other hand, for any $w \in W_F$ we have $^Lj(w) \in {^Lj}(\hat S_0) \cdot {^Lj}\circ a(w)$ and the claim follows. Hence we may
consider
\[ ^L\iota : {^LS_0} \rw {^LS_0'} \]
given by $^Lj'^{-1}\circ{^Lj}$. The restriction of $^L\iota$ to $\hat S_0$ is a $W_F$-equivariant admissible isomorphism $\hat S_0 \rw \hat S_0'$, hence there
exists $h \in G(\ol{F})$ such that $\tx{Ad}(h) : S_0 \rw S_0'$ is defined over $F$ and $^L\iota|_{\hat S_0} = \hat{\tx{Ad}}(h)^{-1}\rtimes 1$. This $h$ is not
yet the element we are looking for. We will successively modify $h$ by multiplying it on the right by elements of $S_0(\ol{F})$ to achieve $h \in G(O_F)$.
First, let $h' \in G(O_{F^u})$ be such that $\tx{Ad}(h')S_0=S_0'$ \cite[exp 12, 1.7]{SGA3}. Since every element of $\Omega(S_0,G)(\ol{F})$ has a representative
in $N(S_0,G)(O_{F^u})$ we see that $h'^{-1}h \in N(S_0,G)(O_{F^u})S_0(\ol{F})$, and conclude that $h \in G(O_{F^u})S_0(\ol{F})$. Modifying $h$ by an element of
$S_0(\ol{F})$ we may assume $h \in G(O_{F^u})$. For each $\sigma \in \Gamma$, $h^{-1}\sigma(h)$ belongs to $N(S_0,G)(O_{F^u})$ and acts trivially on $S_0$,
hence belongs to $S_0(O_{F^u})$. Since $H^1(\Gamma,S_0(O_{F^u}))$ is trivial, we can again modify $h$ by an element of $S_0(O_{F^u})$ and achieve $h \in
G(O_F)$.

We claim that this $h$ satisfies the statement of the lemma. We already know that $\tx{Ad}(h)S_0=S_0'$, and it will be enough to show that ${^Lj'} =
{^Lj}\circ\hat{\tx{Ad}}(h)$. Put $^Lj'' = {^Lj'}\circ\hat{\tx{Ad}}(h)^{-1}$. By construction, the restriction of ${^Lj}^{-1}\circ {^Lj''}$ to $\hat S_0$ is the
natural inclusion $\hat S_0 \rw {^LS_0}$. Let $c : W_F \rw {^LS_0}$ be the restriction of ${^Lj}^{-1}\circ{^Lj''}$ to $W_F$. This is an unramified Langlands
parameter, and we wish to show that the corresponding character $\chi_c$ of $S_0(F)$ is trivial. This will imply that $^Lj''$ and $^Lj$ are conjugate under
$^Lj(\hat S_0)$, hence $[^Lj'']=[^Lj]$ and the proof of the lemma will be complete. Since $\chi_c$ is unramified, it is enough by \cite[7.1.1]{Kal09} to show
that it is trivial when restricted to $Z(G)^\circ(F) \subset S_0(F)$. If $^Lp : {^LS_0} \rw {^LZ(G)^\circ}$ is the dual of the natural inclusion $Z(G)^\circ
\rw S_0$, then the Langlands parameter for $\chi_c|_{Z(G)^\circ(F)}$ is $^Lp \circ c$. On the other hand, if $^Lq : {^LG} \rw {^LZ(G)^\circ}$ is the dual of
the natural inclusion $Z(G)^\circ \rw G$, then we obtain, using the third property of our triples, that
\[^Lp \circ c = {^Lq}\circ{^Lj}\circ c = {^Lq} \circ {^Lj''} = {^Lp}. \]
\qed

We are now going to construct for every pair $o,o'$ of hyperspecial vertices in $\mc{B}^\tx{red}(G,F)$ an element $(o,o') \in S_{[\phi]}$. Choose an element
$(S_0,[a],[^Lj])$ of $\Xi([\phi],o)$. We obtain a group homomorphism
\[ G_\tx{ad}(F) \rw H^1(F,Z(G)) \rw H^1(F,S_0) \rw \tb{B}(S_0) \rw X_*(S_0)_\Gamma \rw S_{[\phi]}^D, \]
where the first map is the connecting homomorphism, the second is induced by the natural inclusion, the third is the injection from \cite[1.8]{Kot85}, the
fourth is the isomorphism constructed in \cite[\S2]{Kot85}, and the fifth is the map induced by $^Lj$. We let $(o,o') \in S_{[\phi]}^D$ be the image of any
$g_\tx{ad} \in G_\tx{ad}(F)$ such that $g_\tx{ad}o=o'$ under this homomorphism. It is clear that if $o' \in G(F)o$, then $(o,o')=1$.

\begin{lem} \label{lem:hyper} This map is well defined, and if $o,o',o'' \in \mc{B}^\tx{red}(G,F)$ are
three hyperspecial vertices, then
\[ (o,o'')=(o,o')\cdot(o',o''). \]
\end{lem}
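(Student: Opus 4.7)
The composite
\[ \alpha_T:\;G_\tx{ad}(F) \lrw H^1(F,Z(G)) \lrw H^1(F,S_0) \lrw \tb{B}(S_0) \lrw X_*(S_0)_\Gamma \lrw S_{[\phi]}^D, \]
attached to a triple $T=(S_0,[a],[^Lj]) \in \Xi([\phi],o)$, is a group homomorphism: its first step is the connecting homomorphism for the central extension $1 \rw Z(G) \rw G \rw G_\tx{ad} \rw 1$, which is a homomorphism because $Z(G)$ is central, and each of the remaining steps is plainly so. Granting well-definedness of $(o,o') := \alpha_T(g_\tx{ad})$ (independence of the lift $g_\tx{ad}$ and of the triple $T$), the cocycle identity follows at once. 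Fix $T \in \Xi([\phi],o)$ and lifts $g_\tx{ad}, h_\tx{ad}$ with $g_\tx{ad} o = o'$ and $h_\tx{ad} o' = o''$. Then $h_\tx{ad} g_\tx{ad}$ carries $o$ to $o''$, so
\[ (o,o'') \;=\; \alpha_T(h_\tx{ad} g_\tx{ad}) \;=\; \alpha_T(h_\tx{ad}) \cdot \alpha_T(g_\tx{ad}), \]
and the right factor is $(o,o')$ by definition. To identify $\alpha_T(h_\tx{ad})$ with $(o',o'')$, which a priori is computed from a triple in $\Xi([\phi],o')$, I would use the translate $g_\tx{ad} \cdot T \in \Xi([\phi],o')$ for that computation; since $S_{[\phi]}^D$ is abelian, the cocycle identity reduces to the equivariance $\alpha_{u \cdot T} = \alpha_T$ for all $u \in G_\tx{ad}(F)$.

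This equivariance is a naturality statement and also settles independence of the triple. The inner automorphism $\tx{Ad}(u)$ is an $F$-automorphism of $G$ restricting to the identity on $Z(G)$, so it identifies the inclusions $Z(G) \hrw S_0$ and $Z(G) \hrw u(S_0)$ and induces compatible $F$-isomorphisms at each subsequent stage of $\alpha$, the final step being intertwined by the replacement $^Lj \mapsto {^Lj}\circ\hat{u}$ that is built into the action on triples. Specializing $u = h_\tx{ad}$ for $h \in G(O_F)$, Lemma \ref{lem:tripuni} then yields independence of the choice of triple within $\Xi([\phi],o)$; specializing $u = g_\tx{ad}$ as above completes the cocycle identity.

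It remains to show that $\alpha_T$ vanishes on the stabilizer of $o$ in $G_\tx{ad}(F)$, so that $(o,o')$ does not depend on the chosen lift. I expect this to be the main obstacle. Since $o$ is hyperspecial, this stabilizer is $G_\tx{ad}(O_F)$ for the smooth reductive $O_F$-model determined by $o$. Given such a $g_\tx{ad}$, I would lift it to $g \in G(O_{F^u})$ by smoothness of the isogeny $G \rw G_\tx{ad}$ and Lang's theorem on the special fiber, so that the connecting cocycle $\sigma \mapsto g^{-1}\sigma(g)$ lands in $Z(G)(O_{F^u})$ and represents an unramified class in $H^1(F,Z(G))$. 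Its image in $H^1(F,S_0)$, with $S_0$ itself defined over $O_F$, is then represented by an unramified cocycle valued in $S_0(O_{F^u})$; under Kottwitz's isomorphism $\tb{B}(S_0) \cong X_*(S_0)_\Gamma$, which on unramified classes is essentially the valuation map into $X_*(S_0)$ followed by coinvariants, any such class maps to zero. The delicate point is arranging the integral lift across the possibly disconnected kernel $Z(G)$, which uses hyperspeciality of $o$ together with the fact that $Z(G)$ is unramified.
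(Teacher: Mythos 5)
Your overall architecture — observing that $\alpha_T$ is a homomorphism, reducing the cocycle identity to the equivariance $\alpha_{u\cdot T}=\alpha_T$ (which is exactly how the paper handles the multiplicativity), and invoking Lemma \ref{lem:tripuni} for independence of the triple — matches the paper's proof. The gap is precisely at the step you flag as the main obstacle, and it is not merely delicate: the integral lift does not exist in general. The map $G(O_{F^u})\rw G_\tx{ad}(O_{F^u})$ need not be surjective, because $Z(G)$ is of multiplicative type but need not be smooth; the obstruction lives in $H^1_\tx{fppf}(O_{F^u},Z(G))$, which is nonzero already for $\mu_p\subset SL_p$ (by Kummer theory it is $O_{F^u}^\times/(O_{F^u}^\times)^p$, and $1+\pi$ is not a $p$-th power there). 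Concretely, for $g_\tx{ad}\in PGL_p(O_F)$ the image of $\tx{diag}(1+\pi,1,\dots,1)$, every lift to $SL_p$ involves $(1+\pi)^{-1/p}$ and generates a ramified extension; the class $\partial g_\tx{ad}\in H^1(F,\mu_p)$ is genuinely nontrivial and ramified. So no argument that tries to represent $\partial g_\tx{ad}$ by an unramified cocycle valued in $Z(G)(O_{F^u})$ can succeed; the vanishing only happens after pushing into $S_0$.

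The paper's fix is to control the lift modulo $S_0(\ol F)$ rather than modulo nothing. Since $h_\tx{ad}$ stabilizes $o$, the torus $S_0':=\tx{Ad}(h_\tx{ad})S_0$ is again defined over $O_F$, and the argument of Lemma \ref{lem:tripuni} (conjugacy of $O_F$-tori under $G(O_{F^u})$, integrality of representatives of the Weyl group, vanishing of $H^1(\Gamma,S_0(O_{F^u}))$) lets one write an arbitrary lift $h\in G(\ol F)$ of $h_\tx{ad}$ as $h=h_1s_1$ with $h_1\in G(O_F)$ and $s_1\in S_0(\ol F)$. The connecting cocycle $\sigma\mapsto h^{-1}\sigma(h)$, viewed in $S_0(\ol F)$, then equals $s_1^{-1}\bigl(h_1^{-1}\sigma(h_1)\bigr)\sigma(s_1)$, i.e.\ it is cohomologous in $S_0$ to a cocycle valued in $S_0(O_{F^u})$, hence trivial in $H^1(F,S_0)$ — even though its class in $H^1(F,Z(G))$ is not trivial. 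In the $SL_p$ example above this is visible directly: the lift factors as $1\cdot s_1$ with $s_1$ in the torus, so the cocycle is a coboundary in $S_0$. Your closing observation that unramified $S_0(O_{F^u})$-valued classes die under Kottwitz's map is correct but is needed only after this decomposition, not before.
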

\pf The independence of the choice of $(S_0,[a],[^Lj])$ follows from Lemma \ref{lem:tripuni}. To show that the choice of $g_\tx{ad}$ is also irrelevant, let $g'_\tx{ad} \in G_\tx{ad}(F)$ be another element with $g'_\tx{ad}o=o'$, and consider $h_\tx{ad}:=g_\tx{ad}^{-1}g_\tx{ad}'$. Endow $G$ and $G_\tx{ad}$ with the $O_F$-structure corresponding to $o$. Then $S_0' = \tx{Ad}(h_\tx{ad})S_0$ is another maximal torus of $G$ defined over $O_F$. Let $h \in G(\ol{F})$ be a lift of $h_\tx{ad}$.  Arguing as in the proof of Lemma \ref{lem:tripuni} we can write $h=h_1 s_1$ where $h_1 \in G(O_F)$ and $s_1 \in S_0(\ol{F})$. Then the image of $h_\tx{ad}$ in $H^1(F,S_0)$ is represented by the cocycle $\sigma \mapsto s_1^{-1}h_1^{-1}\sigma(h_1)\sigma(s_1)$. The cocycle $\sigma \mapsto h_1^{-1}\sigma(h_1)$ takes values in $S_0(O_{F^u})$, and is thus cohomologically trivial. We conclude that the elements
$g_\tx{ad}$ and $g'_\tx{ad}$ map to the same element of $S_{[\phi]}^D$.

The multiplicative property follows easily from the fact that if $(S_0,[a],[^Lj]) \in
\Xi([\phi],o)$ and $g_\tx{ad} \in G_\tx{ad}(F)$ is such that $g_\tx{ad}o=o'$, then $\tx{Ad}(g_\tx{ad})(S_0,[a],[^Lj]) \in \Xi([\phi],o')$ and the maps
$H^1(F,Z(G)) \rw S_{[\phi]}^D$ induced by these two triples are the same. \qed

We now assume that $[\phi]$ satisfies the conditions 1-3 of Section \ref{sec:parms}.

\begin{pro} \label{pro:basepoint} Let $o,o' \in \mc{B}^\tx{red}(G,F)$ be hyperspecial, and choose
$(S_0,[a],[^Lj]) \in \Xi([\phi],o)$ and $(S_0',[a'],[^Lj']) \in \Xi([\phi],o')$. If  $f : S_{[\phi]}^D \rw \Pi_{[\phi]}$ and $f' : S_{[\phi]}^D \rw
\Pi_{[\phi]}'$ are the corresponding versions of \eqref{eq:packs}, then $\Pi_{[\phi]} = \Pi_{[\phi]}'$ and we have a commutative diagram
\begin{diagram}[small,midshaft]
S_{[\phi]}^D&\rTo^f&\Pi_{[\phi]}\\
\dTo<{+(o,o')}&&\dEquals\\
S_{[\phi]}^D&\rTo^{f'}&\Pi_{[\phi]}'
\end{diagram}
In particular, the bijection $f$ depends only on the $G(F)$-orbit of $o$.
\end{pro}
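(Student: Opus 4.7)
The plan is to reduce via Lemma \ref{lem:tripuni} to comparing a fixed triple with its $\tx{Ad}(g_\tx{ad})$-transport, and then identify the shift between the two bijections with $(o,o')$. Choose $g_\tx{ad} \in G_\tx{ad}(F)$ with $g_\tx{ad} o = o'$; the transported triple $\tx{Ad}(g_\tx{ad})(S_0,[a],[^Lj])$ lies in $\Xi([\phi],o')$, and by Lemma \ref{lem:tripuni} it differs from $(S_0',[a'],[^Lj'])$ by conjugation by some $h \in G(O_F)_{o'} \subset G(F)$. Conjugation by an $F$-rational element is a trivial ep twist and produces equivalent quadruples, so it does not change the bijection $f'$. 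I may therefore assume $(S_0',[a'],[^Lj']) = \tx{Ad}(g_\tx{ad})(S_0,[a],[^Lj])$.

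Next, lift $g_\tx{ad}$ to $\tilde g \in G(\ol F)$ and set $z_\sigma := \tilde g^{-1}\sigma(\tilde g) \in Z(G)(\ol F)$; this cocycle represents the image of $g_\tx{ad}$ under the connecting map $G_\tx{ad}(F) \rightarrow H^1(F,Z(G))$. Because $z$ is central, $\tx{Ad}(\tilde g)$ restricts to an $F$-rational isomorphism $S_0 \rightarrow S_0'$ that intertwines the characters $\theta_0$ and $\theta_0'$. Dually, for $\chi \in S_{[\phi]}^D$ the cocharacter class $\lambda_\chi \in X_*(S_0)_\Gamma$ corresponding via $[^Lj]$ matches $\lambda'_\chi := \tx{Ad}(\tilde g)\lambda_\chi \in X_*(S_0')_\Gamma$ corresponding via $[^Lj']$, so $f'(\chi)$ is represented by the quadruple built from $\lambda'_\chi$.

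The main step is to compare this with $f(\chi)$, the quadruple built from $\lambda_\chi$. Using $\Phi(\tilde g) = \tilde g z_\Phi$, a direct computation shows $\tilde g \cdot b_{\lambda_\chi}(\Phi) \cdot \Phi(\tilde g)^{-1} = \lambda'_\chi(\pi) \cdot z_\Phi^{-1}$, whence $b_{\lambda'_\chi}$ is cohomologous in $G(\ol L)$ to $b_{\lambda_\chi} \cdot z$. Although $z$ is trivial in $\tb{B}(G)_b$ (with witness $\tilde g$), its class in $\tb{B}(S_0) \cong X_*(S_0)_\Gamma$ is generally non-trivial, represented by some $\lambda_z \in X_*(S_0)_\Gamma$. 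Recall that the proof of the equivalence-of-quadruples lemma in Section \ref{sec:packs} requires a coboundary witness lying in the torus $S_0(L)$, not merely in $G(\ol L)$, in order to preserve the pair $(S_0,\theta_0)$. Running that argument across the two triples, with $\tx{Ad}(\tilde g)$ mediating between $S_0$ and $S_0'$, one concludes that the quadruple for $\lambda'_\chi$ (triple 2) is equivalent to the quadruple for $\lambda_\chi + \lambda_z$ (triple 1). By the very definition of $(o,o')$, the class $\lambda_z$ corresponds to $(o,o')$ under $[^Lj]$, giving $f'(\chi) = f(\chi + (o,o'))$ up to sign convention, hence the commutativity of the diagram; surjecting over $\chi$ gives $\Pi_{[\phi]} = \Pi_{[\phi]}'$.

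The ``in particular'' assertion follows: if $o' \in G(F) o$, one may choose $g_\tx{ad}$ to lift to $G(F)$, so $z$ is trivial and $(o,o') = 1$, yielding $f = f'$. The main technical obstacle is the last step, where one must justify the necessity of the $\lambda_z$-shift: although $z$ becomes a coboundary in $G(\ol L)$, the witness $\tilde g$ does not lie in $S_0$, so the cross-triple equivalence of quadruples---which requires a torus-valued witness to preserve the torus-character data---forces precisely the shift by the class $\lambda_z$, thereby producing the $(o,o')$-translation appearing in the diagram.
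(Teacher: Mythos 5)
Your overall strategy is the same as the paper's: reduce via Lemma \ref{lem:tripuni} to the case $(S_0',[a'],[^Lj'])=\tx{Ad}(g_\tx{ad})(S_0,[a],[^Lj])$, lift to $\tilde g\in G(\ol F)$, observe that $z_\sigma=\tilde g^{-1}\sigma(\tilde g)$ is central and that its class in $\tb{B}(S_0)\cong X_*(S_0)_\Gamma$ is by definition what produces $(o,o')$. However, the step you yourself flag as ``the main technical obstacle'' is in fact the entire content of the proof, and you do not carry it out. Asserting that ``running that argument across the two triples'' yields the equivalence of the quadruple built from $\lambda'_\chi$ via the second triple with the one built from $\lambda_\chi+\lambda_z$ via the first is not a proof: one must exhibit a strongly trivial ep twist between the two groups, with a witness whose image in $G_\tx{ad}$ lies in $G_\tx{ad}(\ol F)$ (this is required by the definition of equivalence of ep twists) and whose adjoint action carries $(S_\lambda,\theta_\lambda)$ to $(S_{\lambda'},\theta_{\lambda'})$ so that Lemma \ref{lem:pairiso} applies. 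The paper does this concretely: it chooses $s\in S_0(\ol F)$ with $gs\in G(F^u)$ and $(gs)^{-1}\Phi(gs)=\lambda_g(\pi)^{-1}$ (the existence of such $s$ is exactly the statement that the class of $z$ in $\tb{B}(S_0)$ corresponds to $(o,o')$), and then verifies by direct computation that
\[ (\psi_{\lambda'},b_{\lambda'})\circ(\tx{Ad}(g),(gs)^{-1}\Phi(gs))\circ(\psi_\lambda,b_\lambda)^{-1} \]
is strongly trivial. Your argument stops exactly where this computation begins.

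A second, related defect: ``up to sign convention'' is not acceptable here, because the direction of the translation is precisely what the proposition asserts. Carrying out the cocycle computation with the witness $gs$ as above, the condition for strong triviality at $\Phi$ reads $s^{-1}z_\Phi\Phi(s)=[\nu-\lambda''](\pi)$ when comparing the quadruple built from $\nu$ via the first triple with the one built from $\tx{Ad}(g)\lambda''$ via the second; since $s^{-1}z_\Phi\Phi(s)=\lambda_g(\pi)^{-1}$, one gets $\lambda''=\nu+\lambda_g$, i.e.\ $f(\rho)=f'(\rho+(o,o'))$ as in the commutative diagram. Your stated conclusion $f'(\chi)=f(\chi+(o,o'))$ is the opposite translation (it amounts to $f(\rho)=f'(\rho-(o,o'))$), so as written your argument proves a different diagram unless $(o,o')$ happens to be $2$-torsion. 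The rest of your setup (the reduction, the rationality of $\tx{Ad}(\tilde g):S_0\rw S_0'$ and the matching of characters, the identification of $\lambda_z$ with $(o,o')$, and the treatment of the ``in particular'' clause) is correct and agrees with the paper.
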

\pf Fix representatives $a,{^Lj}$ in the classes $[a],[^Lj]$ and put $\phi = {^Lj}\circ a$. As in step 2, $^Lj$ gives a map $S_\phi^D \rw X_*(S_0)_\Gamma$
which we will denote by $^Lj^D$. Let $\rho \in S_\phi^D$ and $\lambda \in X_*(S_0)$ be such that the image of $\rho$ under $^Lj^D$ coincides with the class of
$\lambda$ in $X_*(S_0)_\Gamma$.

Let $g \in G(\ol{F})$ be such that its image in $G_\tx{ad}$ is defined over $F$ and maps $o$ to $o'$. By Lemma \ref{lem:tripuni} we may choose $g$ so that
$(S_0',[a'],[^Lj']) = \tx{Ad}(g)(S_0,[a],[^Lj])$. Let $\lambda_g \in X_*(S_0)$ be an element whose image in $X_*(S_0)_\Gamma$ corresponds to $(o,o')$ under
$^Lj^D$. Then $\lambda' := \tx{Ad}(g)[\lambda+\lambda_g]$ is an element of $X_*(S_0')$, $^Lj' := {^Lj}\circ\hat{\tx{Ad}}(g)^{-1}$ is a representative of the
class $[^Lj']$, and the image of $\rho+(o,o')$ under $^Lj'^D$ coincides with the class of $\lambda'$ in $X_*(S_0')_\Gamma$.

As in step 2 of the previous section, we obtain from $(S_0,\theta_0,\lambda)$ a quadruple $(G^\lambda,\psi_\lambda,b_\lambda,\pi_\lambda)$. This quadruple is
the image of $\rho$ under $f$. In the same way, we obtain from $(S_0',\theta_0',\lambda')$ a quadruple
$(G^{\lambda'},\psi_{\lambda'},b_{\lambda'},\pi_{\lambda'})$, which is the image of $\rho+(o,o')$ under $f'$. We claim that the two quadruples are equivalent.
Indeed, there exists an $s \in S_0(\ol{F})$ with $gs \in G(F^u)$ and $(gs)^{-1}\Phi(gs)=\lambda_g(\pi)^{-1}$. A straightforward computation shows that
\[ (\psi_{\lambda'},b_{\lambda'}) \circ(\tx{Ad}(g),(gs)^{-1}\Phi(gs))\circ (\psi_\lambda,b_\lambda)^{-1} \]
is a strongly-trivial twist which identifies $(S_\lambda,\theta_\lambda)$ with $(S_{\lambda'},\theta_{\lambda'})$ and hence $\pi_\lambda$ with
$\pi_{\lambda'}$.\qed

\subsection{Relation to the construction of DeBacker and Reeder} \label{sec:oldpacks}

We continue with $G$, $^LG$, and $[\phi]$ as in \ref{sec:packs}. In \cite{DR09} the authors choose a certain representative $\phi$ of $[\phi]$
and construct an $L$-packet $\Pi'_{\phi}$ on all pure inner forms of $G$ as well as a bijection
\[ \pi_0(S_{\phi})^D \rw \Pi'_{\phi}. \]
The goal of this section is to prove the following
\begin{thm} \label{thm:compat}
The set $\Pi'_{\phi}$ can be naturally identified with a subset of $\Pi_{[\phi]}$, and we have a commutative diagram
\begin{diagram}[small]
\pi_0(S_{\phi})^D&\rTo&\Pi'_{\phi}\\
\dInto&&\dInto\\
S_{[\phi]}^D&\rTo&\Pi_{[\phi]}
\end{diagram}
\end{thm}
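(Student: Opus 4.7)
The plan is to match the DR construction in \cite{DR09} step-for-step with Step 2 of Section \ref{sec:packs}. Both constructions start from a representative $\phi$ of $[\phi]$ (and, in the DR case, an implicit hyperspecial vertex $o$) and first manufacture a triple $(S_0,[a],[^Lj])$ of the same shape as the elements of $\Xi([\phi],o)$. By Lemma \ref{lem:tripuni} any two such triples differ by $G(O_F)$-conjugation, so we may arrange that the DR triple coincides with the one used in our Step 2. Lemma \ref{lem:hyper} and Proposition \ref{pro:basepoint} then guarantee that the ambiguity in the choice of base-point is absorbed into the identification of $S_{[\phi]}^D$-indexing, so the packets themselves do not depend on this choice.

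The second step is to identify $\Pi'_\phi$ with the subset of $\Pi_{[\phi]}$ indexed by those $\lambda \in X_*(S_0)$ whose image in $X_*(S_0)_\Gamma$ is torsion. This indexing set agrees with the DR indexing by the classical isomorphism $H^1(F,S_0) \cong X_*(S_0)_{\Gamma,\tx{tors}}$, which is the restriction to the torsion subgroup of Kottwitz's isomorphism $X_*(S_0)_\Gamma \cong \tb{B}(S_0)$. For such a $\lambda$, the Newton homomorphism $\nu_{b_\lambda}$ is trivial, so the class of $b_\lambda$ in $\tb{B}(G)_b$ lies in the image of $H^1(F,G) \hrw \tb{B}(G)_b$. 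Explicitly, one exhibits $t \in S_0(\ol{L})$ such that $t^{-1}b_\lambda\Phi(t)$ takes values in $S_0(F^u)$ and represents a class in $Z^1(F,S_0)$; its image in $H^1(F,G)$ is the pure inner form DR construct. Consequently the ep twist $(G^\lambda,\psi_\lambda,b_\lambda)$ is equivalent to a pure inner twist $(G^u,\psi_u,u)$.

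The third step is to match representations. Both constructions define the representation attached to $\lambda$ as $\pi_{G^\lambda,S_\lambda,\theta_\lambda}$ in the sense of \ref{sec:reps} (which is verbatim the construction recalled from \cite[\S4.4]{DR09}). Once the triples and the parameter $\lambda$ have been identified, and the pair $(S_\lambda,\theta_\lambda)$ is transported across the equivalence of inner twists produced in the second step, the two representations coincide tautologically. This yields the natural injection $\Pi'_\phi \hookrightarrow \Pi_{[\phi]}$ and shows that the DR bijection is the restriction of our bijection to the subset of $S_{[\phi]}^D$ indexed by torsion elements of $X_*(S_0)_\Gamma$.

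It then remains to check commutativity of the diagram. The left vertical arrow $\pi_0(S_\phi)^D \hookrightarrow S_{[\phi]}^D$ is dual to $S_\phi \thrw \pi_0(S_\phi)$; under the identification of $S_{[\phi]}^D$ with $X_*(S_0)_\Gamma$ given by $^Lj$, it is exactly the inclusion of the torsion subgroup, which by the previous two steps matches the inclusion $\Pi'_\phi \hookrightarrow \Pi_{[\phi]}$. The main obstacle, and the only nontrivial bookkeeping, is in the second step: verifying that the explicit cocycle $t^{-1}b_\lambda\Phi(t) \in Z^1(F,S_0)$ produces the exact DR pure inner form and not merely one cohomologous up to a global translation; this amounts to comparing the connecting-map normalization used in \cite[1.8]{Kot85} with the one implicit in the DR setup, and is essentially the torsion specialization of Kottwitz's functoriality for $\tb{B}$.
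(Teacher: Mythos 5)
Your overall strategy (identify the triples, identify the indexing by torsion cocharacters, then match representations) is the right shape, but there are two genuine gaps. First, you assert that the DR data yields a triple ``of the same shape as the elements of $\Xi([\phi],o)$'' and then invoke Lemma \ref{lem:tripuni} to align it with the one from Step 2. But the DR construction does not produce an $L$-embedding: the paper must build $^Lj_0$ out of $\hat{\tx{Ad}}(p_0)$ via the Langlands--Shelstad construction with unramified $\chi$-data and then \emph{prove} that $[^Lj_0\circ a_0]=[\phi]$. Agreement on inertia is automatic, but agreement at $\Phi$ is not --- DR's parameter $\phi_T$ is only pinned down through the image of $\phi(\Phi)$ in $[\hat G]_\tx{ab}$ --- and the paper settles this by the same restriction-to-$Z(G)^\circ(F)$ argument as in Lemma \ref{lem:tripuni}, using \cite[7.1.1]{Kal09}. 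Your appeal to Lemma \ref{lem:tripuni} presupposes that the DR triple already lies in $\Xi([\phi],o)$, which is precisely what needs proof.

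Second, and more seriously: matching the cohomology classes of $b_\lambda$ and $u_\mu$ is not enough. Equivalence of the quadruples requires a strongly trivial admissible twist $G^\lambda \rw G^{u_\mu}$ that carries the specific pair $(S_\lambda,\theta_\lambda)$ to $(T_\mu,\theta_\mu)$; by Lemma \ref{lem:pairiso}, pairs in the same group that are not rationally conjugate give inequivalent representations, so an abstract equivalence of twists obtained from a cohomological identification of classes does not ``tautologically'' identify the representations. You defer exactly this point to a comparison of connecting-map normalizations, but it is the actual content of the theorem, not bookkeeping. The paper resolves it by writing down the explicit candidate twist $(\psi'_\mu,u_\mu)\circ\tx{Ad}(p_\mu p_0^{-1})\circ(\psi_\lambda,b_\lambda)^{-1}$ and computing its cocycle to be $1$, which hinges on the structural identity $p_\mu^{-1}u_\mu\Phi(p_\mu)=\mu(\pi)\dot w$ of \cite[2.7.(8)]{DR09} together with $u_0=1$, $p_0\in G(O_{F^u})$, and the compatibility $\theta_0=\tx{Ad}(p_0)\theta$. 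Without invoking that identity (or a substitute carrying the same information about how $p_\mu$ intertwines $u_\mu$ with $\mu(\pi)$), your argument does not close.
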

Let us very briefly recall their construction.

First, the authors choose a quasi-split maximal torus $T \subset G$, a hyperspecial vertex $o \in \mc{A}^\tx{red}(T,F)$, and a $\Gamma$-invariant maximal torus $\hat
T \subset \hat G$ dual to $T$. They write $^LG = \hat G \rtimes W_F$ and choose $\phi$ within its equivalence class so that $\phi_0(I_F) \subset \hat T$, where
$\phi_0$ is the projection of $\phi$ to $\hat G$. Then
\begin{diagram} w: W_F&\rTo^{\phi_0}&N(\hat T,\hat G)&\rTo&\Omega(\hat T,\hat G)&\rTo^{\cong}&\Omega(T,G) \end{diagram}
is an unramified cocycle. They choose an element $\dot w$ of finite order inside $N(T,G)(O_{F^u})$ which lifts $w(\Phi)$.

Let $T^w$ be the twist of $T$ by $w$. As an $L$-group for $T^w$ we can take
\[ ^LT^w = \hat T \rtimes_\phi W_F, \]
where $W_F$ acts on $\hat T$ via its image under $\phi$. In \cite[4.3]{DR09}, a Langlands parameter $\phi_T : W_F \rw {^LT^w}$ is constructed, by letting
$\phi_T|_{I_F} = \phi|_{I_F}$, and letting $\phi_T(\Phi)=\tau \rtimes \Phi$ where $\tau \in \hat T$ is any element whose image under
\[ \hat T \rw \hat G \rw [\hat G]_\tx{ab} \]
belongs to the $\Phi$-twisted conjugacy class of $\phi(\Phi)$. The Langlands parameter $\phi_T$ corresponds to a character $\theta : T^w(F) \rw \C^\times$.

Next, for every $\mu \in X_*(T^w)$ whose image in $X_*(T^w)_\Gamma$ is torsion, DeBacker and Reeder use in \cite[2,4.4]{DR09} the combinatorics of the Bruhat-Tits
building of $G$ to construct an unramified cocycle $u_{\mu} \in Z^1(\Gamma,G)$. Let $(\psi'_{\mu},u_{\mu}) : G \rw G^{u_{\mu}}$ be the corresponding pure inner
twist. Furthermore, they construct $p_{\mu} \in G(F^u)$ such that both the maximal torus $T_{\mu} := \psi'_{\mu}\tx{Ad}(p_{\mu})T$ and the isomorphism
$\psi'_{\mu}\tx{Ad}(p_{\mu}) : T^w \rw T_{\mu}$ are defined over $F$. Via this isomorphism, the character $\theta$ can be transported to a character
$\theta_{\mu} : T_{\mu}(F) \rw \C^\times$, and one obtains a representation $\pi'_{\mu} := \pi_{G^{u_{\mu}},T_{\mu},\chi_{\mu}}$ as in Section \ref{sec:reps}.
It is argued in \cite[4.5.3]{DR09} and \cite[6.1]{Ree08} that the equivalence class of the quadruple $(G^{u_{\mu}},\psi'_{\mu},u_{\mu},\pi'_{\mu})$ depends
only on the image of ${\mu}$ in $X_*(T^w)_\Gamma$. The set $\Pi'_{[\phi]}$ is then the set of equivalence classes of such quadruples, and we obtain the
bijection
\[ \pi_0(S_\phi)^D = \pi_0([\hat T^w]^\Gamma)^D = X_*(T^w)_\Gamma \rw \Pi'_{[\phi]}. \]
We will not review the details of the construction. The crucial property that will be important for us is  \cite[2.7.(8)]{DR09}, namely
\[ p_{\mu}^{-1}u_{\mu}\Phi(p_{\mu}) = {\mu}(\pi)\dot w. \]
Moreover, $u_0=1$, and $p_0 \in G(O_{F^u})$.

\begin{lem} There exists an $L$-embedding $^Lj_0 : {^LT_0} \rw {^LG}$ extending the admissible isomorphism $\hat{\tx{Ad}}(p_0) : \hat T_0 \rw \hat T$, such
that the triple $(T_0,[a_0],[^Lj_0])$ with $a_0 = \hat{\tx{Ad}}(p_0)^{-1}\circ\phi_T$ belongs to $\Xi([\phi],o)$.
\end{lem}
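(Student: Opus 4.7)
The plan is to write $^Lj_0$ down by hand using the data of $\phi$ and $\phi_T$. Write $\phi(w) = \phi_0(w) \rtimes w$ with $\phi_0(w) \in N(\hat T, \hat G)$ and $\phi_T(w) = \phi_{T,0}(w) \rtimes w$ with $\phi_{T,0}(w) \in \hat T$. Set
\[ c(w) := \phi_{T,0}(w)^{-1}\phi_0(w) \in N(\hat T,\hat G), \qquad {^Lj_0}(t \rtimes w) := \hat{\tx{Ad}}(p_0)(t) \cdot c(w) \rtimes w. \]
Since $\tx{Ad}(p_0) : T^w \rw T_0$ is an $F$-isomorphism of tori, the admissible isomorphism $\hat{\tx{Ad}}(p_0) : \hat T_0 \rw \hat T$ intertwines the Galois action on $\hat T_0$ with the $\phi$-twisted action on $\hat T$ that defines $^LT^w = \hat T \rtimes_\phi W_F$, so the right-hand side is a bona fide element of $^LG$ with the correct $W_F$-component.

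The homomorphism property of $^Lj_0$ is a direct computation using the multiplication rule in $^LT^w$: after expanding both sides, the mismatch between the $\phi_0$-twisted and untwisted $W_F$-actions is absorbed by the fact that $\phi_{T,0}$ takes values in the abelian group $\hat T$ while $\phi_0(w)$ normalizes $\hat T$. Since $c(1) = 1$, $^Lj_0$ extends $\hat{\tx{Ad}}(p_0)$; and since $\phi_{T,0}(w)\,c(w) = \phi_0(w)$, one has $^Lj_0 \circ a_0 = \phi$ on the nose.

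It then remains to check the membership conditions of $\Xi([\phi],o)$. Ellipticity of $T_0$: condition 2 on $\phi$ forces $S_\phi = \tx{Cent}(\phi,\hat G)$ to sit inside $\hat T$, and under $\hat{\tx{Ad}}(p_0)^{-1}$ this subgroup identifies with $(\hat T_0)^\Gamma$; condition 3 then gives $\dim (\hat T_0)^\Gamma = \dim Z(\hat G)^\Gamma$, whence $T_0$ is elliptic. Unramifiedness of $[^Lj_0]$: because $\phi_T|_{I_F} = \phi|_{I_F}$ by the DeBacker-Reeder construction, $c|_{I_F} = 1$, and together with $T_0$ being an unramified torus (as $p_0 \in G(O_{F^u})$) this is exactly the shape of an $L$-embedding arising from unramified $\chi$-data. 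Commutativity of the diagram with $^LZ(G)^\circ$: the vertical map $^LG \rw {^LZ(G)^\circ}$ factors through the abelianization $\hat G \thrw [\hat G]_\tx{ab}$, so it suffices to show that $\bar c : W_F \rw [\hat G]_\tx{ab}$ is a $1$-coboundary. By the DR characterization of $\tau$ in \cite[4.3]{DR09}, $\bar\tau$ and $\overline{\phi_0(\Phi)}$ lie in the same $\Phi$-twisted conjugacy class of $[\hat G]_\tx{ab}$, so $\overline{c(\Phi)}$ is visibly a $1$-coboundary (and $\bar c|_{I_F}=1$). Lifting the coboundary witness through the surjection $\hat T \thrw [\hat G]_\tx{ab}$ to an element $z \in \hat T$ and replacing $^Lj_0$ by its $z$-conjugate (which preserves both $c|_{I_F}=1$ and the restriction to $\hat T_0$, since $\hat T$ is abelian and $I_F$ acts trivially on $\hat T$) produces a representative of $[^Lj_0]$ for which the triangle commutes strictly.

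The main technical point is the last verification, which is where the specific design of $\tau$ in the DR construction gets used; the ellipticity and unramifiedness assertions are essentially bookkeeping, and the homomorphism identity is a direct computation powered by the abelianness of $\hat T$.
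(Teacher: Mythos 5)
Your explicit construction of $^Lj_0$ is sound as far as it goes: the cocycle identity for $c(w)=\phi_{T,0}(w)^{-1}\phi_0(w)$, together with the fact that $\tx{Ad}(c(w))$ and $\tx{Ad}(\phi_0(w))$ agree on $\hat T$, does make $^Lj_0$ an $L$-homomorphism extending $\hat{\tx{Ad}}(p_0)$ with $^Lj_0\circ a_0=\phi$ on the nose; and your treatment of ellipticity and of the triangle over $^LZ(G)^\circ$ (via DeBacker--Reeder's characterization of $\tau$ in $[\hat G]_\tx{ab}$, followed by a $\hat T$-conjugation) is essentially correct. The genuine gap is the claim that $[^Lj_0]$ is a class of \emph{unramified} $L$-embeddings. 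Saying that $c|_{I_F}=1$ plus the unramifiedness of $T_0$ ``is exactly the shape of an $L$-embedding arising from unramified $\chi$-data'' is not a proof, and as a general principle it is false: an $L$-embedding extending $\hat{\tx{Ad}}(p_0)$ with trivial inertial cocycle is determined only up to multiplication by an arbitrary unramified $1$-cocycle $W_F\rw\hat T_0$, i.e.\ up to an arbitrary unramified character of $T_0(F)$, and only a specific coset of these twists is realized by the construction of \cite[2.6]{LS87} with unramified $\chi$-data. Nothing in your definition of $c(\Phi)$ pins it down to that coset, and the definition of $\Xi([\phi],o)$ demands membership in it.

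Closing this gap is exactly where the paper's proof lives, so you have shifted the difficulty rather than removed it. The paper takes $^Lj_0$ to be a Langlands--Shelstad embedding built from unramified $\chi$-data (so unramifiedness and the triangle condition hold by construction), lets $a_1$ be the parameter with $[^Lj_0\circ a_1]=[\phi]$ supplied by Step 1 of the packet construction, and proves $[a_0]=[a_1]$: the two parameters agree on $I_F$, so their difference is an unramified character of $T_0(F)$; by the triangle condition and the identity $^Lp\circ[a_i]={^Lq}\circ\phi$ (the same use of the definition of $\tau$ that you make) this character is trivial on $Z(G)^\circ(F)$; and by ellipticity of $T_0$ and \cite[7.1.1]{Kal09} an unramified character of an elliptic maximal torus that is trivial on $Z(G)^\circ(F)$ is trivial. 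You could rescue your construction by running this same comparison in the other direction -- showing that your $^Lj_0$ differs from the Step-1 embedding by an unramified character that is trivial on $Z(G)^\circ(F)$, hence trivial, so that the two are $\hat G$-conjugate -- but that comparison is the actual content of the lemma, not bookkeeping.
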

\pf The extension of $\hat{\tx{Ad}}(p_0)$ to an $L$-embedding $^Lj_0$ is provided by the construction \cite[2.6]{LS87} after choosing unramified $\chi$-data.
The torus $T_0$ is conjugate to $T$ by $p_0$, hence defined over $O_{F^u}$. We only need to check that $[^Lj_0\circ a_0]=[\phi]$. It is clear that this
equality holds when both sides are restricted to $I_F$. To show that both sides are also equal when evaluated at $\Phi$, we argue as follows. Let $a_1 : W_F
\rw {^LT_0}$ be a homomorphism such that $[^Lj_0 \circ a_1]=[\phi]$ (see the argument in Step 1 of \ref{sec:packs}). The Langlands parameters $a_0$ and $a_1$
coincide on inertia, hence the difference of characters on $T_0(F)$ that they correspond to is unramified. To show that this difference is in fact trivial, and
hence $[a_0]=[a_1]$, we argue again as in the proof of Lemma \ref{lem:tripuni}. By \cite[7.1.1]{Kal09} it is enough to show that the difference is trivial when
restricted to $Z(G)^\circ(F) \subset S_0(F)$. This restriction of the character corresponding to $a_0$ has the Langlands parameter
\[ ^Lp \circ [a_0] = {^Lq}\circ[{^Lj_0}\circ a_0] = {^Lq}\circ\phi\]
by construction of $\phi_T$. The restriction of the character corresponding to $a_1$ has the Langlands parameter
\[ ^Lp \circ [a_1] = {^Lq}\circ[{^Lj_0}\circ a_1] = {^Lq}\circ\phi\]
by construction of $a_1$.\qed

\pf (of Theorem \ref{thm:compat})\\
We keep the choices of $o$,$T$,$\hat T$,$\phi$ made so far. Let $\rho \in \pi_0(S_\phi)^D$, and let $\mu \in X_*(T^w)$ be an element whose image in
$X_*(T^w)_\Gamma$ equals the image of $\rho$ under the identification $\pi_0(S_\phi)^D=[X_*(T^w)_\Gamma]_\tx{tor}$. Let $\lambda=\tx{Ad}(p_0)\mu \in X_*(T_0)$.
By the preceding Lemma, we can use the triple $(T_0,[a_0],[^Lj_0])$ for our construction of \eqref{eq:packs}. Then we see that the image of $\rho$ under the
map
\[\pi_0(S_\phi)^D \rw S_\phi^D \rw X_*(T_0)_\Gamma\]
given by $^Lj_0$ equals the class of $\lambda$. Thus we need to show that the quadruple $(G^{u_{\mu}},\psi'_{\mu},u_{\mu},\pi'_{\mu})$ constructed by
DeBacker-Reeder resp. Reeder and the quadruple $(G^\lambda,\psi_\lambda,b_\lambda,\pi_\lambda)$ constructed in Section \ref{sec:packs} are equivalent. By Lemma
\ref{lem:pairiso} we need to find a strongly trivial twist
\[(G^\lambda,\psi_\lambda,b_\lambda) \rw (G^{u_{\mu}},\psi'_{\mu},u_{\mu}), \]
which carries the pair $(S_\lambda,\theta_\lambda)$ to the pair $(T_{\mu},\theta_{\mu})$. We claim that
\[ (\psi'_\mu,u_\mu)\circ\tx{Ad}(p_\mu p_0^{-1})\circ(\psi_\lambda,b_\lambda)^{-1} \]
is a strongly-trivial twist. Computing the cocycle of this twist we obtain
\begin{eqnarray*}
&&\psi_\lambda(\tx{Ad}[p_\mu p_0^{-1}]^{-1}(u_\mu)(p_\mu p_0^{-1})^{-1}\Phi(p_\mu p_0^{-1})b_\lambda^{-1})\\
&=&\psi_\lambda(p_0 p_\mu^{-1}u_\mu\Phi(p_\mu)\Phi(p_0^{-1})b_\lambda^{-1})\\
&=&\psi_\lambda(p_0\mu(\pi)\dot w\Phi(p_0^{-1})b_\lambda)\\
&=&\psi_\lambda(\lambda(\pi)\tx{Ad}[p_0](\dot w \Phi(p_0^{-1})p_0)b_\lambda^{-1})\\
&=&1.
\end{eqnarray*}
If $\theta_0 : T_0(F) \rw \C^\times$ is the character corresponding to the parameter $a_0$, then we have $\theta_0 = \tx{Ad}(p_0)\theta$, and hence the above
twists carries the pair $(S_\lambda,\theta_\lambda)$ to the pair $(T_\mu,\theta_\mu)$.\qed

\subsection{Generic representations} \label{sec:generic}

We saw in Proposition \ref{pro:basepoint} that the bijection \eqref{eq:packs} depends only the the $G(F)$-orbit of the chosen hyperspecial vertex $o$. The
image of the trivial character under this bijection is a generic representation, and the set of generic characters with respect to which this representation is
generic can also be read off from the vertex $o$.

For a hyperspecial vertex $o'$ and $r \geq 0$ consider the set of pairs
\[\tx{Gen}[o',r] = \{ (B,\psi) \}, \]
where $B$ is a Borel subgroup of $G$ such that the reduced apartment of some maximal torus of $B$ contains $o'$, and $\psi : B_u(F) \rw \C^\times$
is a character of generic depth $r$ at $o'$. Let
\[ \tx{Gen}[G(F)o,r] = \bigcup_{o' \in G(F)o} \tx{Gen}[o',r]. \]

\begin{fct} Let $o$ a hyperspecial vertex, and $\pi$ the image of the trivial character on $S_{[\phi]}$ under the version of
\eqref{eq:packs} corresponding to $o$. Then for each $(B,\psi) \in \tx{Gen}[G(F)o,0]$ we have $\tx{Hom}_{B_u(F)}(\pi,\psi)\neq 0$.
\end{fct}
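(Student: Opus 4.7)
The plan is to unwind the construction of \eqref{eq:packs} at the trivial character, identify $\pi$ with the DeBacker--Reeder base-point representation, and invoke their genericity result.

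Under the isomorphism ${^Lj}^D : S_{[\phi]}^D \rw X_*(S_0)_\Gamma$ the trivial character corresponds to the class of $\lambda=0 \in X_*(S_0)$. Since $b_0(\Phi)=0(\pi)=1$ the ep twist $(\psi_0,b_0)$ is strongly trivial, so $\pi$ is equivalent to $\pi_{G,S_0,\theta_0}$. Here $S_0$ is the elliptic unramified maximal torus constructed in Step 1 of Section \ref{sec:packs}; being defined over $O_F$ and elliptic, its reduced $F$-apartment equals $\{o\}$, and $\theta_0$ is the depth-zero regular character attached to $[a]$.

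By Theorem \ref{thm:compat}, applied in the case $\mu=0$ (so that $u_\mu=1$, $p_0 \in G(O_{F^u})$, and the pure inner form is $G$ itself), $\pi$ coincides with the representation attached by DeBacker--Reeder to the trivial character of $\pi_0(S_\phi)$ with the choice of vertex $o$; this is the base-point representation of their packet on the trivial pure inner form. The genericity of this representation with respect to every $(B,\psi) \in \tx{Gen}[o,0]$ is established in \cite[\S4.5, \S6]{DR09}. Passage from $\tx{Gen}[o,0]$ to $\tx{Gen}[G(F)o,0]$ is immediate by $G(F)$-conjugation: given $o'=go$ with $g \in G(F)$ and $(B,\psi) \in \tx{Gen}[o',0]$, one checks that $\pi$ is $\psi$-generic iff it is $(\psi\circ\tx{Ad}(g))$-generic with respect to $g^{-1}Bg$, and the latter pair lies in $\tx{Gen}[o,0]$; by Proposition \ref{pro:basepoint} (together with the observation that $(o,o')=1$ for $o' \in G(F)o$) the equivalence class of $\pi$ is unchanged by this translation.

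The only substantive ingredient, hidden in the \cite{DR09} reference, is a finite-field computation: via Frobenius reciprocity from $Z(F)G(F)_{o,0}$ to $G(F)_o$, and via the Moy--Prasad identification $G(F)_{o,0}/G(F)_{o,0+} \cong \ts{G}(k_F)$ which matches characters of generic depth $0$ at $o$ with non-degenerate characters $\bar\psi$ of $\ts{B}_u(k_F)$, the existence of a $\psi$-Whittaker functional on $\pi$ reduces to the classical fact that the cuspidal Deligne--Lusztig representation $\epsilon(\ts{G},\ts{S}_0)R_{\ts{S}_0}^{\theta_0}$ is $\bar\psi$-generic for every non-degenerate $\bar\psi$, which follows from \cite[\S10]{DL76}. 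This is the main (and only) obstacle; everything else is bookkeeping around the identification of $\pi$ with the DR09 base point.
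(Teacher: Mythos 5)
Your argument is correct and follows essentially the same route as the paper, whose proof is simply the citation of \cite[\S6]{DR09}: the trivial character corresponds to $\lambda=0$, the resulting ep twist is strongly trivial, so $\pi$ is the DeBacker--Reeder base-point representation, whose genericity for all of $\tx{Gen}[G(F)o,0]$ is exactly what \cite[\S6]{DR09} (resting on the Deligne--Lusztig computation) establishes. You have merely made explicit the bookkeeping that the paper leaves implicit.
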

\pf This is the content of \cite[\S6]{DR09}.\qed

Recall that an $F$-splitting of $G$ is a triple $(T,B,\{X_\alpha\})$ where $(T,B)$ is a Borel pair, and $X_\alpha \in \mf{g}_\alpha(\ol{F})$ is a non-trivial
element for each simple root of $T$ in $B$, and the set $\{X_\alpha\}$ is $\Gamma$-invariant. Any pair $(T,B)$ can be extended to an $F$-splitting. Moreover,
the isomorphism $\mb{G}_a \rw \mf{g}_\alpha$ given by $1 \mapsto X_\alpha$ exponentiates to a homomorphism $x_\alpha : \mb{G}_a \rw G$ whose image is the
unique 1-dimensional subgroup of $G$ which $T$ normalizes and acts on by the character $\alpha$. If $G$ is defined over $O_F$, an $F$-splitting of $G$ will be
called an $O_F$-splitting if $T$ is defined over $O_F$, and each homomorphism $x_\alpha$ is defined over $O_F$ and induces a non-trivial homomorphism on the
special fibers.

Let
\[ \tx{Spl}[o'] = \{(T,B,\{X_\alpha\})\} \]
consist of the $O_F$-splittings of $G$ for the $O_F$-structure defined by $o'$, and let
\[ \tx{Spl}[G(F)o] = \bigcup_{o' \in G(F)o} \tx{Spl}[o']. \]
Fix a character $\Lambda : F \rw \C^\times$ which is trivial on $\pi^{r+1} O_F$ but non-trivial on $\pi^r O_F$. An element $(T,B,\{X_\alpha\}) \in
\tx{Spl}[o']$ gives rise to an element $(B,\psi) \in \tx{Gen}[o',r]$ as follows. The map
\[ \prod_{\alpha \in \Delta(T,B)} \mb{G}_a \rw B_u \rw B_u^\tx{ab},\qquad (\xi_\alpha) \mapsto \prod x_\alpha(\xi_\alpha) \]
is an isomorphism of $\ol{F}$-groups. Composing its inverse with the summation map gives a homomorphism of $F$-groups $B_u^\tx{ab} \rw \mb{G}_a$, which then
composed with $\Lambda$ provides a generic depth $r$ character $\psi : B_u(F) \rw \C^\times$.

\begin{pro} \label{pro:genchar} The map $(T,B,\{X_\alpha\}) \mapsto (B,\psi)$ is $G_\tx{ad}(F)$-equivariant. It induces a surjection
\[ \tx{Spl}[G(F)o] \rw \tx{Gen}[G(F)o,r] \]
whose fiber over a given $(B,\psi)$ is a $B_u(F)$-orbit. In particular, it induces a bijection
\[ \tx{Spl}[G(F)o]/\tx{Ad}(G(F)) \rw \tx{Gen}[G(F)o,r]/\tx{Ad}(G(F)). \]
\end{pro}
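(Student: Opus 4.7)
The plan is to observe first that equivariance is immediate: conjugation by any $g \in G_\tx{ad}(F)$ commutes with the exponentiation $x_\alpha$, with the product isomorphism $\prod\mathbb{G}_a \rw B_u^{\tx{ab}}$, and with the summation map, so $g$ transports a splitting to a splitting and its associated generic character to the corresponding character. The heart of the argument then splits into surjectivity and the fiber description; the induced bijection on $G(F)$-orbits will follow formally from these two statements.

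For surjectivity, I would fix $(B,\psi) \in \tx{Gen}[o',r]$ for some $o' \in G(F)o$. By the definition of $\tx{Gen}[o',r]$ there is a maximal torus $T \subset B$ with $o' \in \mc{A}^\tx{red}(T,F)$; such $T$ is automatically defined over $O_F$ for the integral structure at $o'$. I would construct a reference $O_F$-splitting $(T,B,\{X_\alpha^0\})$ by taking each $X_\alpha^0$ to be an $O_F$-generator of the line $\mf{g}_\alpha(O_F)$, chosen compatibly with the $\Gamma$-action on the set of simple roots. Let $\psi_0$ denote its image; then $\psi$ and $\psi_0$ both lie in $\tx{Gen}[o',r]$, and their ratio factors through $B_u^{\tx{ab}}(F) \cong \bigoplus_\alpha F$ as a $\Gamma$-compatible tuple $(c_\alpha) \in F^\times$. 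The key claim is that the generic depth $r$ hypothesis on both characters forces $v_F(c_\alpha) = 0$ for every $\alpha$, so replacing $X_\alpha^0$ by $c_\alpha X_\alpha^0$ (permissibly, since $c_\alpha \in O_F^\times$) yields an $O_F$-splitting mapping to $(B,\psi)$.

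For the fiber, suppose $(T,B,\{X_\alpha\})$ and $(T',B,\{X'_\alpha\})$ both map to $(B,\psi)$. Since $B_u(\ol{F})$ acts simply transitively on the maximal tori of $B \times \ol{F}$, there is a unique $b \in B_u(\ol{F})$ with $bTb^{-1} = T'$; Galois uniqueness forces $b \in B_u(F)$, and a smoothness argument (the scheme of elements of $B_u$ conjugating $T$ to $T'$ is smooth over $O_F$ and has a unique geometric point) shows that $b$ matches the $O_F$-structures of the two splittings. Replacing the first splitting by $b \cdot (T,B,\{X_\alpha\}) = (T',B,\Ad(b)\{X_\alpha\})$ reduces us to $T = T'$, and the rigidity built into the surjectivity step then yields $X_\alpha = X'_\alpha$; hence the fiber is a single $B_u(F)$-orbit. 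The bijection on $G(F)$-orbits is now formal: given two splittings with $G(F)$-conjugate images, use $G(F)$-equivariance to arrange equal images and apply the fiber statement. The main obstacle I anticipate is the claim $v_F(c_\alpha) = 0$ in the surjectivity step; this requires carefully unwinding the \cite{DR08} definition of ``generic depth $r$ at $o'$'' to see that it constrains each simple-root coordinate of the character precisely, together with checking Galois compatibility when $\Gamma$ permutes the simple roots.
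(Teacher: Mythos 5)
Your proposal is correct and follows essentially the same route as the paper: equivariance is immediate from the construction, surjectivity comes down to the depth condition forcing the coefficients scaling the root vectors to be units (the paper phrases this as the equality of the depths of $\psi$ and $\Lambda$ forcing the splitting to lie in $\tx{Spl}[o]$), and the fiber computation reduces via $B_u(F)$ to a fixed pair $(T,B)$, where the paper compares two tuples via the unique $t \in T_\tx{ad}(F)$ carrying one to the other and shows $\alpha(t)=1$. The one point you should make explicit is that your last step (``the rigidity built into the surjectivity step yields $X_\alpha = X'_\alpha$'') requires the injectivity of $c \mapsto \Lambda_{E_\alpha}(c\,\cdot)$ on $E_\alpha$, i.e.\ that a non-trivial additive character of the local field $E_\alpha$ determines its scaling exactly, and not merely the valuation constraint $v(c_\alpha)=0$ established in the surjectivity step.
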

\pf The $G_\tx{ad}(F)$-equivariance is obvious from the construction. To show surjectivity, fix $(B,\psi) \in \tx{Gen}[o,r]$. Choose any $F$-splitting $(T',B',\{X_\alpha'\})$ and let $(B',\psi')$ be the corresponding generic character. Since $(B,\psi)$ and $(B',\psi')$ are conjugate under $G_\tx{ad}(F)$, we see that there exists a splitting $(T,B,\{X_\alpha\})$ mapping to $(B,\psi)$. Conjugating this splitting by $B_u(F)$ we may assume that the reduced apartment of $T$ contains $o$. The fact that the depths of $\psi$ and $\Lambda$ are equal forces $(T,B,\{X_\alpha\}) \in \tx{Spl}[o]$.

Now consider two splittings $(T,B,\{X_\alpha\})$ and $(T',B,\{X'_\alpha\})$ in the fiber over $(B,\psi)$. Conjugating by $B_u(F)$ we may assume that $T=T'$. There exists a unique element $t \in T_\tx{ad}(F)$ sending $\{X_\alpha\}$ to $\{X'_\alpha\}$. We wish to show that in fact $t=1$. By assumption, $\tx{Ad}(t)$ preserves $\psi$. Let $\alpha \in
\Delta(T,B)$ and let $E/F$ be the field of definition of $\alpha$. If $U_{(\alpha)}$ is the relative root subgroup of $G$ defined by the restriction
$(\alpha)$ of $\alpha$ to the split part of $T$, then
\[ E \rw [U_{(\alpha)}]_\tx{ab}(F),\quad \xi \mapsto \prod_{\sigma\in\tx{Gal}(E/F)} x_{\sigma\alpha}(\sigma\xi) \]
is an isomorphism of groups. Composing $\psi$ with this isomorphism yields a non-trivial character on $E$. The automorphism $\tx{Ad}(t)$ pre-composes this character with multiplication by $\alpha(t) \in E^\times$. Thus, the assumption that $\tx{Ad}(t)$ preserves $\psi$ implies that $\alpha(t)=1$ for all $\alpha \in \Delta(T,B)$, hence $t=1$. \qed

\section{Stability and endoscopic transfer} \label{sec:stabendo}

We keep the notation established so far. In particular, $G$ is an unramified connected reductive group defined over $F$, $^LG$ is an $L$-group for $G$, and $[\phi]$ is an equivalence class of
Langlands parameters satisfying conditions 1-3 of Section \ref{sec:parms}.

\subsection{Preparatory lemmas} \label{sec:seprep}

Let $J$ be a connected reductive group defined over $F$ with Lie algebra $\mf{j}$, $B$ a conjugation\-invariant bilinear form on $\mf{j}(F)$, and $\chi : F \rw \C^\times$ an
additive character. To this data, one can define the Weil constant $\gamma_\chi(B)$, see \cite[VIII]{Wal95}. If $J'$ is an inner form, or an endoscopic group
of $J$, then Waldspurger shows in loc. cit. how to transfer $B$ to a conjutation\-invariant bilinear form $B'$ on $\mf{j'}(F)$, and the ratio
$\gamma_\chi(B)\gamma_\chi(B')^{-1}$ plays an important role in Waldspurger's work on endoscopy for $p$-adic Lie algebras. In order to prove the stability and
endoscopic transfer of our depth-zero packets, we need to know that when $J$ and $J'$ split over $F^u$, this ratio can be expressed in terms of the $F$-split ranks of
$J$ and $J'$. This has been worked out when $J$ and $J'$ are pure inner forms in \cite[\S12]{DR09}, and when $J$ is quasi-split and $J'$ is an endoscopic group
in \cite[\S4]{Kal09}. Here we build on the arguments of \cite[\S12]{DR09} to handle the case when $J$ and $J'$ are general inner forms. Of course it is enough
to assume that $J$ is quasi-split.

\begin{lem} \label{lem:sign} Let $J$ be an unramified connected reductive group defined over $F$ and $\psi : J \rw J'$ an inner twist. Let $B$ be an $\tx{Ad}(J(F))$-invariant bilinear form on $\tx{Lie}(J)(F)$,
and $\chi : F \rw \C^\times$ an additive character. Then
\[ e(J') = \gamma_\chi(B)\gamma_\chi(B')^{-1}, \]
where $B'$ is the corresponding bilinear form on $\tx{Lie}(J')(F)$, and $e$ is the Kottwitz\-sign.
\end{lem}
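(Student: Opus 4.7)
My strategy is to extend the calculation of \cite[\S12]{DR09}, which proves the identity when $\psi$ is a pure inner twist, to arbitrary inner twists, via a $z$-extension reduction followed by an EP-twist analysis.

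I would first reduce to the case $Z(J)$ connected. Choose an unramified $z$-extension $1 \rw Z \rw \tilde J \rw J \rw 1$ with $Z$ a central induced torus and $\tilde J_{\tx{der}}$ simply connected; then $\tilde J$ is unramified with connected center. The cocycle of $\psi$ lies in $Z^1(\Gamma, J_{\tx{ad}}) = Z^1(\Gamma, \tilde J_{\tx{ad}})$ and canonically defines an inner twist $\tilde\psi : \tilde J \rw \tilde J'$ with $\tilde J'/Z \cong J'$. Both sides of the identity transfer through $\tilde J \rw J$: the Kottwitz sign satisfies $e(\tilde J') = e(J')$ because $e$ records the split-rank discrepancy to the quasi-split form and the central kernel $Z$ contributes identically on both sides; and using the Levi decomposition $\tilde{\mf j} = \mf z \oplus \mf j$ of the reductive Lie algebra in characteristic zero, one may choose $\tilde B = B_Z \perp B$. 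Since $Z$ is central it is preserved by the inner twist, so $\tilde B' = B_Z \perp B'$, and multiplicativity of the Weil constant under orthogonal sums gives $\gamma_\chi(\tilde B) \gamma_\chi(\tilde B')^{-1} = \gamma_\chi(B) \gamma_\chi(B')^{-1}$.

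Once $Z(J)$ is connected, the Fact in Section~\ref{sec:eptwists} supplies a basic cocycle $b \in Z^1(W_F, J(\ol L))_b$ such that $(\psi, b)$ is an ep twist. Running the argument in the proof of that Fact, I may further arrange that $b$ is unramified and satisfies $b(\Phi^n) \in Z(J)(F)$ for some $n$. I then follow the template of \cite[\S12]{DR09}: pick an elliptic maximal torus $T \subset J$ and its transfer $T' \subset J'$ (arranged so that the inner-twist cocycle normalizes $T$ over $\ol{F}$), and express both sides as products indexed by $\Gamma$-orbits in $R(T,J)$, weighted by Galois-cohomological invariants of the inner-twist cocycle. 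Kottwitz's isomorphism $X_*(T)_\Gamma \cong \tb{B}(T)$ translates the EP datum $b$ into the same type of cocycle input that the pure-inner DR09 computation manipulates, yielding the identity.

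The main obstacle is this last step: the computation of \cite[\S12]{DR09} was written assuming the transfer $T \rw T'$ is $F$-rational, as is automatic for pure inner twists. Generalizing requires tracking the contribution of the isocrystal $b$ through both the Kottwitz sign and the Weil constant of $B'$, and verifying that the extra contributions cancel exactly. This amounts to bookkeeping via Kottwitz's isomorphism applied fiberwise over the $\Gamma$-orbits of roots of $T$, and is the essential new input beyond \cite{DR09}.
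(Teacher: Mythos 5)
There is a genuine gap: your proposal stops exactly where the proof has to begin. You correctly identify that the argument should run through an unramified elliptic maximal torus $T' \subset J'$ and Waldspurger's factorization of $\gamma_\chi(B)$ over symmetric $\Gamma$-orbits of roots, but you then declare the comparison of the two sides to be "bookkeeping via Kottwitz's isomorphism applied fiberwise" and call it "the essential new input" without carrying it out. That comparison \emph{is} the lemma. The paper's proof does it as follows: after arranging (up to equivalence of $\psi$) that $T := \psi^{-1}(T')$ is defined over $F$ and $\psi|_T$ is an $F$-isomorphism, the twisting cocycle $a_\sigma$ lands in $Z^1(\Gamma, T_\tx{ad})$ and, being unramified, is of the form $a_\Phi = \lambda(\pi)$ for some $\lambda \in X_*(T_\tx{ad})$. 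Combining Waldspurger's product formula with the Jacquet--Langlands evaluation $\gamma_{\chi_\alpha}(q_\alpha) = (-1)^{d + \tx{val}(B(E_\alpha,E_{-\alpha}))}$ and the computations of \cite[12.3.2--12.3.3]{DR09} gives $\gamma_\chi(B)\gamma_\chi(B')^{-1} = (-1)^{\langle\lambda, 2\rho\rangle}$; separately, unwinding the definition of $e(J')$ through $H^1(F,T_\tx{ad}) \rw H^1(F,J_\tx{ad}) \rw H^2(F,Z(J_\tx{sc}))$ and Tate--Nakayama duality shows $e(J') = \exp(2\pi i \langle\lambda,\rho\rangle)$. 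Neither computation appears in your proposal, and your stated "main obstacle" (that $T \rw T'$ need not be $F$-rational for a non-pure twist) is not actually an obstacle --- the rationality of $\psi|_T$ can be arranged for any inner twist exactly as above; the genuine difference from the pure case is only that the cocycle is valued in $T_\tx{ad}$ rather than $T$, which is what the $X_*(T_\tx{ad})$ and Tate--Nakayama steps address.

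Two further points. First, the $z$-extension reduction and the passage to ep twists via the Fact of Section \ref{sec:eptwists} are unnecessary detours: the lemma holds for arbitrary inner twists of unramified groups with no hypothesis on $Z(J)$, and the relevant datum is the unramified cocycle in $T_\tx{ad}$ (a cocharacter evaluated at $\pi$), not an isocrystal. Building the argument on the Fact would also illegitimately import a connectedness assumption on the center that the lemma does not make. Second, even granting your reduction, the claim that the "extra contributions cancel exactly" is asserted, not proved; as written, the proposal is a plan for a proof rather than a proof.
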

\pf Let $T' \subset J'$ be an unramified elliptic maximal torus, which exists by \cite[2.4]{Deb06}. Up to equivalence of $\psi$ we may assume that the maximal torus $T:=\psi^{-1}(T')$ is defined
over $F$ and $\psi : T \rw T'$ is an $F$-isomorphism. Recall that an orbit of $\Gamma$ in $R(T,J)$ is called symmetric if it is invariant under multiplication
by $-1$. Let $\tx{Sym}(T)$ be a set of representatives for the symmetric orbits. For each $\alpha \in \tx{Sym}(T)$, let $F_\alpha$ be the fixed field of the
stabilizer of $\alpha$ in $\Gamma$, and $F_{\pm\alpha}$ be the fixed field of the stabilizer of $\{\alpha,-\alpha\}$ in $\Gamma$. The root subspace
$\mf{j}_\alpha \subset \mf{j}$ is defined over $F_\alpha$ and we may choose $E_\alpha \in \mf{j}_\alpha(F_\alpha)$. It was shown by Waldspurger
\cite[VIII.5]{Wal95} that
\[ \gamma_\chi(B) = \gamma_\chi(B|_{\mf{t}(F)}) \prod_{\alpha \in\tx{Sym}(T)} \gamma_{\chi_\alpha}(q_\alpha), \]
where $\chi_\alpha : F_{\pm\alpha} \rw \C^\times$ is the character $\chi\circ\tr_{F_{\pm\alpha}/F}$, and $q_\alpha : F_\alpha \rw F_{\pm\alpha}$ is the
quadratic form $\lambda \mapsto \tx{Nm}_{F_\alpha/F_{\pm\alpha}}(\lambda)B(E_\alpha,E_{-\alpha})$. Let $d$ be the level of $\chi$ (the largest integer such
that $\chi$ is trivial on the ideal $\pi^{-d}O_F$). Since $\tr_{F_{\pm\alpha}/F} : O_{F_{\pm\alpha}} \rw O_F$ is surjective, $d$ is also the level of
$\chi_\alpha$. It is known (\cite[Proof of Lemma 1.2]{JL70}) that
\[ \gamma_{\chi_\alpha}(q_\alpha) = (-1)^{d+\tx{val}(B(E_\alpha,E_{-\alpha}))}. \]
A corresponding formula holds for $\gamma_\psi(B')$ as well and since $\psi$ provides an isomorphism $(\mf{t}(F),B) \rw (\mf{t}'(F),B')$ and a bijection
$\tx{Sym}(T) \rw \tx{Sym}(T')$ we obtain
\[ \gamma_\chi(B)\gamma_\chi(B')^{-1} = \prod_{\alpha\in\tx{Sym}(T)}(-1)^{\tx{val}\{B(E_\alpha,E_{-\alpha)})B'(E'_{\psi(\alpha)},E'_{-\psi(\alpha)})\}}. \]
There exists $c_\alpha \in F_\alpha^\times$ such that $E'_{\psi(\alpha)} = c_\alpha\psi(E_\alpha)$ and the right hand side becomes
$\prod(-1)^{\tx{val}(c_\alpha c_{-\alpha})}$.

Let $a_\sigma \in Z^1(\Gamma,J_\tx{ad})$ be such that $\psi^{-1}\sigma(\psi) = \tx{Ad}(a_\sigma)$. By our assumption $a_\sigma \in Z^1(\Gamma,T_\tx{ad})$.
There exists $\lambda \in X_*(T_\tx{ad})$ with $a_\Phi=\lambda(\pi)$ \cite[2.3.3]{Kal09}. The computations of \cite[12.3.2+12.3.3]{DR09} show that
\[ \prod_{\alpha\in\tx{Sym}(T)} (-1)^{\tx{val}(c_\alpha c_{-\alpha})} = (-1)^{\langle\lambda,2\rho\rangle}. \]
where $2\rho$ is the sum of any choice of positive roots in $R(T,J)$.

By definition, $e(J')$ is the image of $a_\sigma$ under
\[ H^1(F,T_\tx{ad}) \rw H^1(F,J_\tx{ad}) \stackrel{\partial}{\lrw} H^2(F,Z(J_\tx{sc})) \stackrel{\rho}{\lrw} H^2(F,\mb{G}_m) \stackrel{\exp(2\pi i\cdot\tx{inv})}{\lrw} \C^\times. \]
Tate-Nakayama-duality identifies the profinite completion of $H^0(F,X^*(\mb{G}_m))$ with the character group of $H^2(F,\mb{G}_m)$. One has
\[ \exp(2\pi i \cdot \tx{inv}(x)) = 1(x),\quad\tx{for}\ x \in H^2(F,\mb{G}_m), 1 \in \Z \stackrel{\cong}{\longrightarrow} H^0(F,X^*(\mb{G}_m)).\]
Hence, applying Tate-Nakayama-duality to the above sequence of arrows we see that $e(J')$ is equal to the pairing of $a$ with the image of $1$ under the dual
sequence
\[ H^0(F,X^*(\mb{G}_m)) \rw H^0(F,X^*(Z(J_\tx{sc}))) \rw H^1(F,X^*(T_\tx{ad})) \]
This image is represented by the cocycle $\Phi \mapsto \Phi(\rho)-\rho$. Since $a$ is the image of $-\lambda$ under the Tate-Nakayama isomorphism
$H^{-1}_\tx{Tate}(F,X_*(T_\tx{ad})) \rw H^1(F,T_\tx{ad})$ one sees by applying \cite[2.3.2]{Kal09} that the pairing of $a$ and $\Phi\mapsto \Phi(\rho)-\rho$ is
$\exp(2\pi i\langle\lambda,\rho\rangle)$.\qed

\subsection{The unstable character} \label{sec:schar}

Let $\phi : W_F \rw {^LG}$ be a Langlands parameter as in Section \ref{sec:parms} whose depth is zero. We fix a hyperspecial vertex $o \in
\mc{B}^\tx{red}(G,F)$ and obtain the corresponding version of the bijection \eqref{eq:packs}. To each $\rho \in S_{[\phi]}^D$, this bijection assigns an
equivalence class of quadruples $\{ (G_b,\psi,b,\pi) \}$. Let $\rho,\rho' \in S_{[\phi]}^D$ and $(G_b,\psi,b,\pi)$ and $(G_{b'},\psi',b',\pi')$ be representatives
within the corresponding equivalence classes. We will call an ep twist $G_b \rw G_{b'}$ \emph{admissible} if it is equivalent to $\psi'\circ\psi^{-1}$. If $\rho,\rho'$ have the same image $\bar\rho$ in $[Z(\hat G)^\Gamma]^D$, then there exists an admissible
strongly trivial ep twists $G_b \rw G_{b'}$, and every such induces the same bijection $\tx{Cl}(G_{b,\tx{sr}}(F)) \rw \tx{Cl}(G_{b',\tx{sr}}(F))$ between the conjugacy
classes of strongly regular semi-simple elements. Hence we may speak of the set $\tx{Cl}_{\bar\rho}$, and the characters of $\pi$ and $\pi'$ provide functions
$\Theta_\rho$ and $\Theta_{\rho'}$ on $\tx{Cl}_{\bar\rho}$. These functions are equal if and only if $\rho=\rho'$. Moreover, the Kottwitz signs of $G_b$ and $G_{b'}$
are the same and we can write $e_{\bar\rho}$ for them.

\begin{dfn}
Let $t \in S_{[\phi]}$. We construct a function $\Theta_{[\phi]}^t$ on $\bigsqcup\limits_{\xi\in [Z(\hat G)^\Gamma]^D} \tx{Cl}_\xi$, which is given on each $\tx{Cl}_\xi$
by
\[ \Theta_{[\phi]}^t = e_{\xi}\sum_{\rho} \rho(t)\Theta_{\rho},\]
where $\rho$ runs over the fiber of $S_{[\phi]}^D \rw [Z(\hat G)^\Gamma]^D$ over $\xi$.
\end{dfn}

Fix an element $(S_0,[a],[^Lj]) \in \Xi([\phi],o)$. We obtain a map
\begin{equation} \label{eq:x} X_*(S_0)_\Gamma \stackrel{\cong}{\lrw} S_{[\phi]}^D. \end{equation}
For $\lambda \in X_*(S_0)$, let $\rho_\lambda \in S_{[\phi]}^D$ be its image. Then the quadruple $(G^\lambda,\psi_\lambda,b_\lambda,\pi_\lambda)$ constructed
in Section \ref{sec:packs} is a representative of the element of $\Pi_{[\phi]}$ corresponding to $\rho_\lambda$ under \eqref{eq:packs}.

\begin{pro} \label{pro:schar} Let $Q_0 \in\tx{Lie}(S_0)(F)$ be a regular semi-simple element. For any $\gamma \in G^\lambda_\tx{sr}(F)_0$ and $z \in Z(F)$, the value of $\Theta_{[\phi]}^t(z\gamma)$ is given by
\[ \epsilon(G,A_G)\theta_0(z)\sum_P [\phi_{Q_0,P}]_*\theta_0(\gamma_s)
\sum_Q \langle \tx{inv}(Q_0,Q),t\rangle^{-1}R(G^\lambda_{\gamma_s},S_Q,1)(\gamma_u) \]
where $P$ runs over a set of representatives for the $G^\lambda_{\gamma_s}$-stable classes of elements of $\tx{Lie}(G^\lambda_{\gamma_s})(F)$ which are
$G^\lambda$-stably conjugate to $Q_0$, and $Q$ runs over a set of representatives for the $G^\lambda_{\gamma_s}(F)$-conjugacy classes inside the
$G^\lambda_{\gamma_s}$-stable class of $P$, and $\langle\tx{inv}(Q_0,Q),-\rangle$ is the image of $\tx{inv}(Q_0,Q)$ under the map $\tb{B}(S_0) \rw
S_{[\phi]}^D$.
\end{pro}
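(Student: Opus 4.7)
My plan is to follow the strategy of the corresponding reduction formula \cite[Proposition 6.2.1]{Kal09}, but generalized from pure inner twists to ep twists. Unwinding the definition of $\Theta_{[\phi]}^t(z\gamma)$ and using the bijection \eqref{eq:x}, the sum $e_\xi\sum_\rho \rho(t)\Theta_\rho(z\gamma)$ can be rewritten as a sum over those $\lambda' \in X_*(S_0)_\Gamma$ whose image in $\tb{B}(G)_b$ equals $b = b_\lambda$. For each such $\lambda'$, the quadruple $(G^{\lambda'},\psi_{\lambda'},b_{\lambda'},\pi_{\lambda'})$ is equivalent, via an admissible strongly trivial ep twist to $(G^\lambda,\psi_\lambda,b_\lambda)$, to a representation of the form $\pi_{G^\lambda, S'_{\lambda'}, \theta'_{\lambda'}}$ for a transported maximal torus $S'_{\lambda'} \subset G^\lambda$ and character $\theta'_{\lambda'}$. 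I would then apply Lemma \ref{lem:1char} to each of these transported representations, using the fact that the admissible twist is strongly trivial so that $\gamma$ (and its Jordan decomposition) is unchanged under transport.

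This produces a double sum: an outer sum over $\lambda'$, and for each $\lambda'$ an inner sum over $G^\lambda_{\gamma_s}(F)$-conjugacy classes $Q'$ of elements of $\tx{Lie}(G^\lambda_{\gamma_s})(F)$ in the $G^\lambda(F)$-orbit of the transported $Q'_{\lambda'} := \psi'_{\lambda'}(Q_0)$. Since $z \in Z(G)(F)$, the ep twist identifies $\theta'_{\lambda'}(z)$ with $\theta_0(z)$, and the Deligne-Lusztig character $[\phi_{Q'_{\lambda'},Q'}]_*\theta'_{\lambda'}(\gamma_s)$ matches with $[\phi_{Q_0,Q}]_*\theta_0(\gamma_s)$ after an appropriate identification of tori. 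The product of signs $e_\xi \cdot \epsilon(G^\lambda, A_{G^\lambda}) = \epsilon(G, A_G)$ is obtained from Lemma \ref{lem:sign} applied to $(G, G^\lambda)$. The Green-function factor $R(G^\lambda_{\gamma_s},S_{Q'},1)(\gamma_u)$ depends only on the centralizer $G^\lambda_{\gamma_s}$ (unaffected by inner twisting) and on the rational class of $Q'$, so it passes through the rearrangement cleanly.

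The key reorganization is that, as $\lambda'$ ranges over the fiber above $b$ and $Q'$ over its inner set, the collective pairs $(\lambda',Q')$ are in bijection with the pairs $(P,Q)$ that index the target formula: $P$ runs over $G^\lambda_{\gamma_s}$-stable classes of elements of $\tx{Lie}(G^\lambda_{\gamma_s})(F)$ which are $G^\lambda$-stably conjugate (in the ep sense of Section \ref{sec:eptwists}) to $Q_0$, and $Q$ runs over the rational classes inside a given stable class. Under this bijection the weight $\rho_{\lambda'}(t)$ transforms into $\langle \tx{inv}(Q_0,Q), t\rangle^{-1}$ via the Kottwitz pairing $\tb{B}(S_0) \times \hat{S_0}^\Gamma \to \C^\times$ and the identification $\hat{S_0}^\Gamma \cong S_{[\phi]}$ induced by $^Lj$.

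The main obstacle lies in this last identification. One must verify that the class $\tx{inv}(Q_0,Q) \in \tb{B}(S_0)$, defined via the extended notion of stable conjugacy across ep forms from Section \ref{sec:eptwists}, corresponds under the Kottwitz isomorphism $\tb{B}(S_0) \cong X^*(\hat{S_0}^\Gamma)$ to the restriction of $\rho_{\lambda'}^{-1}$, where $\lambda'$ is the index of the summand to which $Q$ belongs. Concretely, this amounts to producing a cocycle representing $\tx{inv}(Q_0,Q)$ inside $S_0(\ol{L})$ that differs from the cocycle $b_{\lambda'}$ by a coboundary in $S_0$, and then invoking the Kottwitz isomorphism $X_*(S_0)_\Gamma \cong \tb{B}(S_0)_b$. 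This is the ep-analog of the argument occupying \cite[\S6.2]{Kal09}, with the additional ingredient being the dictionary between basic $\tb{B}$-cocycles and $X_*(S_0)_\Gamma$ developed in Section \ref{sec:eptwists}; once this step is in hand, the remaining bookkeeping is routine and the stated formula falls out.
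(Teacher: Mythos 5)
Your proposal follows essentially the same route as the paper's proof: decompose $\Theta^t_{[\phi]}$ over the fiber of $X_*(S_0)_\Gamma \rw \tb{B}(G)_b$ through $\lambda$, transport each constituent to $G^\lambda$ by a strongly trivial admissible ep twist, apply Lemma \ref{lem:1char} to each transported pair $(S'_{\lambda'},\theta'_{\lambda'})$, identify the weight $\rho_{\lambda'}(t)$ with $\langle\tx{inv}(Q_0,Q),t\rangle^{-1}$ via the Kottwitz isomorphism for $S_0$, and regroup the double sum over $(\lambda',Q')$ into the sum over $(P,Q)$ exactly as the paper does via the Lie-algebra version of \cite[2.1.5]{Kal09}. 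The only quibble is your attribution of the sign identity $e_\xi\,\epsilon(G^\lambda,A_{G^\lambda})=\epsilon(G,A_G)$ to Lemma \ref{lem:sign}, which concerns Weil constants; what is actually used is Kottwitz's formula $e(G^\lambda)=(-1)^{\rk_F G-\rk_F G^\lambda}$ together with $A_{G^\lambda}=A_G$.
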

\pf Let $M \subset X_*(S_0)$ be a set of representatives for the fiber of
\begin{diagram}
X_*(S_0)_\Gamma&\rTo^{\footnotesize\eqref{eq:x}}&S_{[\phi]}^D&\rTo&[Z(\hat G)^\Gamma]^D
\end{diagram}
through $\lambda$. For each $\mu\in M$, and let $\psi_{\mu,\lambda} : G^\mu \rw G^\lambda$ be a strongly trivial admissible ep twist. Put $(S_\mu',\theta_\mu')
= \psi_{\mu,\lambda}(S_\mu,\theta_\mu)$. Then $\Theta_{\rho_\mu}$ equals the character of $\pi_{G^\lambda,S_\mu',\theta_\mu'}$. Hence using Lemma
\ref{lem:1char} and noting that $\theta_0$ and $\theta_\mu'$ coincide on $Z(F)$ we obtain
\[ \Theta_{[\phi]}^t(z\gamma) = \epsilon(G,A_G)\theta_0(z) \sum_{\mu\in M} \rho_{\mu}(t)\sum_{Q} R(G^\lambda_{\gamma_s},S_Q,1)(\gamma_u)[\phi_{Q_\mu,Q}]_*\theta_\mu'(\gamma_s), \]
where $Q_\mu = \psi_{\mu,\lambda}\psi_\mu(Q_0)$ and $Q$ runs over any set of representatives for the $G^\lambda_{\gamma_s}(F)$-conjugacy classes inside the
$G^\lambda(F)$-conjugacy class of $Q_\mu$. We have
\[ \rho_{\mu}(t) = \langle b_\mu^{-1},t \rangle = \langle\tx{inv}(Q_0,Q_\mu),t\rangle^{-1} = \langle\tx{inv}(Q_0,Q),t\rangle^{-1} \]
The reason for the $-$-sign is that $b_\mu$ is the image of $-\mu$ under the isomorphism $X_*(S_0)_\Gamma \rw \tb{B}(S_0)$, and the last equality holds because
$Q_\mu$ and $Q$ are $G^\lambda(F)$-conjugate. Recalling $\theta_\mu'=[\psi_{\lambda,\mu}\psi_\mu]_*\theta_0$ we arrive at
\[ \Theta_{[\phi]}^t(z\gamma) = \epsilon(G,A_G)\theta_0(z)\!\!\sum_{\mu}\!\!\sum_{Q}\!\langle\tx{inv}(Q_0,Q),t\rangle^{-1}\!R(G^\lambda_{\gamma_s},S_Q,1)(\gamma_u)[\phi_{Q_0,Q}]_*\theta_0(\gamma_s) \]
Applying the Lie algebra version of \cite[2.1.5]{Kal09} in our context we see that as $\mu$ runs over the fiber of $X_*(S_0)_\Gamma \rw \tb{B}(G)_b$ through
$\lambda$, $Q_\mu$ runs over a set of representatives for the $G^\lambda(F)$-rational classes inside the intersection of the stable class of $Q_0$ with
$\tx{Lie}(G^\lambda)(F)$. Thus in the double sum, $Q$ runs over a set of representatives for the $G^\lambda_{\gamma_s}(F)$-classes inside the intersection of
the stable class of $Q_0$ with $\tx{Lie}(G^\lambda)(F)$. We can rewrite this sum as
\[ \Theta_{[\phi]}^t(z\gamma) = \epsilon(G,A_G)\theta_0(z)\!\!\sum_{P}\!\!\sum_{Q}\!\langle\tx{inv}(Q_0,Q),t\rangle^{-1}\!R(G^\lambda_{\gamma_s},S_Q,1)(\gamma_u)[\phi_{Q_0,Q}]_*\theta_0(\gamma_s) \]
where $P$ and $Q$ run as in the statement of the lemma. To conclude the proof, we only need to notice that if $Q,Q'$ are $G^\lambda_{\gamma_s}$-conjugate, then
$[\phi_{Q_0,Q}]_*\theta_0(\gamma_s)=[\phi_{Q_0,Q'}]_*\theta_0(\gamma_s)$.\qed

\begin{lem} \label{lem:ha} Let $\gamma^\lambda \in G^\lambda(F)_0$ be strongly regular semi-simple and assume that $\gamma^\lambda_s \in S(F)$ for some maximal torus $S \subset G^\lambda$
which is stably conjugate to $S_0$. Then there exists $\gamma \in G(F)$ stably conjugate to $\gamma^\lambda$ with $\gamma_s \in S_0(F)$.
\end{lem}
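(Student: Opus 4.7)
The plan is to construct $\gamma$ by simultaneously transporting $\gamma^\lambda$ across the stable conjugacy $S \sim S_\lambda := \psi_\lambda(S_0)$ inside $G^\lambda$ and through the ep twist $\psi_\lambda$. Because $\gamma^\lambda$ is strongly regular, its centralizer $T^\gamma := G^\lambda_{\gamma^\lambda}$ is the unique maximal torus of $G^\lambda$ containing $\gamma^\lambda$, and hence also $\gamma^\lambda_s$; both $T^\gamma$ and the given $S$ are thus maximal tori of the connected reductive centralizer $G^\lambda_{\gamma^\lambda_s}$. I would first reduce to the case $S = T^\gamma$ by replacing $\gamma^\lambda$ with a stably conjugate $F$-rational representative in $G^\lambda(F)$ whose centralizer torus is $S$; that this is possible uses the hypothesis that $S$ is stably conjugate to $S_0$ together with the fact that maximal tori of $G^\lambda_{\gamma^\lambda_s}$ in a fixed stable class admit $F$-rational representatives matching any given $F$-rational element of $S$. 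From here on I assume $\gamma^\lambda \in S(F)$.

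Choose $h \in G^\lambda(\bar F)$ with $\tx{Ad}(h): S \to S_\lambda$ defined over $F$, so that $h^{-1}\sigma(h) \in S$ for all $\sigma \in \Gamma$. Put $g := \psi_\lambda^{-1}(h^{-1}) \in G(\bar F)$ and define
\[
\gamma \;:=\; g^{-1}\,\psi_\lambda^{-1}(\gamma^\lambda)\,g \;=\; \psi_\lambda^{-1}\!\bigl(h\gamma^\lambda h^{-1}\bigr).
\]
Since $\gamma^\lambda \in S(F)$ and $\tx{Ad}(h)$ is $F$-rational, $h\gamma^\lambda h^{-1} \in S_\lambda(F)$; and since $\psi_\lambda : S_0 \to S_\lambda$ is an $F$-isomorphism, $\gamma \in S_0(F) \subset G(F)$ and in particular $\gamma_s \in S_0(F)$.

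It remains to verify that $\gamma$ is stably conjugate to $\gamma^\lambda$ in the ep sense. By construction $\psi_\lambda(g\gamma g^{-1}) = \gamma^\lambda$, and the required cocycle condition $g^{-1}b(\sigma)\sigma(g) \in G_{\gamma_s}$, where $b(\sigma) := p_1 b_\lambda(\sigma)$, reduces via the identities $\sigma(\psi_\lambda^{-1}) = \tx{Ad}(b(\sigma))^{-1}\circ\psi_\lambda^{-1}$ and $\sigma(h) = h\cdot s_\sigma$ (with $s_\sigma := h^{-1}\sigma(h) \in S$) to the formula
\[
g^{-1} b(\sigma)\sigma(g) \;=\; \psi_\lambda^{-1}\!\bigl(h s_\sigma^{-1} h^{-1}\bigr)\cdot b(\sigma).
\]
Now $h s_\sigma^{-1} h^{-1} \in S_\lambda$, whence $\psi_\lambda^{-1}(h s_\sigma^{-1} h^{-1}) \in S_0(\bar F)$; and by the very construction of $b_\lambda$ from a cocharacter $\lambda \in X_*(S_0)$, we have $b(\sigma) \in (S_0)_\tx{ad}(\bar F)$. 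Lifting $b(\sigma)$ to $S_0(\bar F)$ modulo $Z(G)$ places the product in $S_0 \subset G_{\gamma_s}$, which finishes the check.

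The main obstacle is the preliminary reduction to $S = T^\gamma$: the hypothesis only places $\gamma^\lambda_s$ (not $\gamma^\lambda$ itself) in $S$, and without $\gamma^\lambda \in S$ the factor $s_\sigma \gamma^\lambda s_\sigma^{-1}$ in the Galois computation does not simplify to $\gamma^\lambda$. Justifying that one can always find a stably conjugate representative with centralizer torus $S$ is an auxiliary cohomological step, essentially separate from the ep-twist bookkeeping that powers the rest of the argument.
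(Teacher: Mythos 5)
There is a genuine gap, and it sits exactly at the point you flag as ``the main obstacle'': the reduction to $\gamma^\lambda \in S(F)$ is not an auxiliary cohomological step but one that is impossible in general. Throughout Section \ref{sec:clp} and \ref{sec:stabendo} the subscripts $s$ and $u$ refer to the \emph{topological} Jordan decomposition of a depth-zero element (see the proof of Theorem \ref{thm:stab}, where $\gamma^\lambda = zsu$ with $s$ topologically semi-simple and $u$ topologically unipotent, and the character formulas in Lemma \ref{lem:1char} and Proposition \ref{pro:schar}). Thus $\gamma^\lambda_s$ is typically a non-regular element whose connected centralizer $M := G^\lambda_{\gamma^\lambda_s}$ has positive semisimple rank; the hypothesis places the maximal torus $S$ of $M$ in the stable class of $S_0$, but the centralizer $T^\gamma = \tx{Cent}(\gamma^\lambda,G^\lambda)$ is a \emph{different} maximal torus of $M$, and nothing forces $T^\gamma$ and $S$ to be rationally or even stably conjugate ($S$ is elliptic in $G^\lambda$, while $T^\gamma$ need not be; e.g.\ $\gamma^\lambda_s$ could be central with $T^\gamma$ split). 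Since stable conjugation of $\gamma^\lambda$ can only move $T^\gamma$ within its own stable class, no stable conjugate of $\gamma^\lambda$ lies in $S(F)$ in general, and the remainder of your argument --- which needs $\gamma^\lambda \in S(F)$ so that $s_\sigma = h^{-1}\sigma(h)$ commutes with $\gamma^\lambda$ --- collapses. A symptom of the misreading: your construction would yield $\gamma \in S_0(F)$, strictly stronger than the stated conclusion $\gamma_s \in S_0(F)$.

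The paper's proof is by reference to \cite[7.2.2]{Kal09}, with $\tx{Ad}(q_0q_\lambda^{-1})$ replaced by an arbitrary admissible ep twist $G^\lambda \rw G$ carrying $S$ to $S_0$ over $F$. Such a twist sends $\gamma^\lambda_s$ to some $t \in S_0(F)$ and restricts to an inner (ep) twist $M \rw G_t$ of connected centralizers; the actual content of the lemma is then to transfer the topologically unipotent part $\gamma^\lambda_u \in M(F)$ across this inner twist, i.e.\ to produce $v \in G_t(F)$ such that $tv$ is rational and stably conjugate to $\gamma^\lambda$. That step is entirely absent from your proposal. (Secondarily, your closing cocycle verification conflates the $G_\tx{ad}(\ol{F})$- and $G(\ol{L})$-components of $b_\lambda$, and ``lifting $b(\sigma)$ modulo $Z(G)$'' does not by itself place the product in $G_{\gamma_s}$; but these issues are downstream of the failed reduction.)
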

\pf The argument is the same as for \cite[7.2.2]{Kal09}, but instead of using $\tx{Ad}(q_0q_\lambda^{-1})$ we use any admissible ep twist sending $S$ to
$S_0$.\qed

\subsection{Stability and endoscopy} \label{sec:se}

In this section we need to impose further restrictions on $F$. Let $n_G$ be the smallest dimension of a faithful representation of $G$, $e$ the ramification
degree of $F/\Q_p$, $e_G$ the maximum ramification degree over $\Q_p$ of a splitting field of a maximal torus of $G$, and $\nu(G)$ be the number of positive
roots of $G$. For Theorem \ref{thm:stab}, we require $q_F \geq \nu(G)$ and $p \geq (2+e)n_G$. For Theorem \ref{thm:endo} we require in addition $p \geq 2+e_G$
and $p \geq (2+e)n_H$ ($H$ the corresponding endoscopic group).

\begin{thm} \label{thm:stab} The function $\Theta_{[\phi]}^1$ is stable. That is, if $\gamma^\lambda \in G^\lambda_\tx{sr}(F),\gamma^\mu \in G^\mu_\tx{sr}(F)$ are stably conjugate,
where $\lambda,\mu \in X_*(S_0)$, then $\Theta_{[\phi]}^1(\gamma^\lambda)=\Theta_{[\phi]}^1(\gamma^\mu)$.
\end{thm}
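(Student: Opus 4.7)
The plan is to apply Proposition \ref{pro:schar} with $t=1$ and observe that the resulting formula is manifestly a function of the stable class of the input. With $t=1$, the pairing factor $\langle\tx{inv}(Q_0,Q),1\rangle^{-1}$ is identically $1$, so the formula collapses to
\[
\Theta_{[\phi]}^1(z\gamma)=\epsilon(G,A_G)\theta_0(z)\sum_{P}\bigl[\phi_{Q_0,P}\bigr]_*\theta_0(\gamma_s)\Bigl(\sum_{Q}R(G^\lambda_{\gamma_s},S_Q,1)(\gamma_u)\Bigr),
\]
where $P$ ranges over $G^\lambda_{\gamma_s}$-stable classes in $\tx{Lie}(G^\lambda_{\gamma_s})(F)$ inside the $G^\lambda$-stable class of $Q_0$, and $Q$ ranges over $G^\lambda_{\gamma_s}(F)$-rational classes in the stable class of $P$. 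The first step is to read each ingredient of this formula as intrinsic to the stable conjugacy class of $\gamma$ in the sense of Section \ref{sec:eptwists}.

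Next, I would handle the two factors separately. For the inner sum, the combination $\sum_Q R(G^\lambda_{\gamma_s},S_Q,1)(\gamma_u)$ is precisely a sum of Deligne--Lusztig characters over a stable class of twisted Levi data, evaluated at the topologically unipotent element $\gamma_u$; by the stability of such DL--sums (equivalently, stability of the associated Green functions on the special fiber, used already in \cite[\S12]{DR09}), its value depends only on the $G^\lambda_{\gamma_s}$-stable class of $\gamma_u$. For the outer sum, the index set is the set of $G^\lambda$-stable classes in $\tx{Lie}(G^\lambda_{\gamma_s})(F)$ meeting the $G$-stable class of $Q_0$, and the summand $[\phi_{Q_0,P}]_*\theta_0(\gamma_s)$ only involves $\theta_0$, $Q_0$ and the stable class of $P$. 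Both of these sets (index and summand) transport naturally across admissible ep twists.

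To make the comparison between $\gamma^\lambda$ and $\gamma^\mu$ concrete, I would use Lemma \ref{lem:ha} to reduce each of them to a stably conjugate element whose semisimple part lies in $S_0(F)$: after passing through admissible strongly-trivial ep twists (which do not change the displayed formula, since the pairing factor is trivial when $t=1$), both computations take place relative to the same torus $S_0$ and the same $Q_0$. The identification of the index sets $\{P\}$ for $\gamma^\lambda$ and $\gamma^\mu$, and the equality of the corresponding inner Green-function sums, then follow because stable conjugation carries $\gamma^\lambda_s,\gamma^\lambda_u$ to $\gamma^\mu_s,\gamma^\mu_u$ and induces a stable isomorphism $G^\lambda_{\gamma^\lambda_s}\rw G^\mu_{\gamma^\mu_s}$ that intertwines the DL data $S_Q$.

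The main obstacle I expect is the bookkeeping around the outer sum: one must verify that the notion of a ``$G^\lambda$-stable class of $Q_0$'' used in Proposition \ref{pro:schar} agrees, under the isomorphism provided by stable conjugation, with the analogous notion on $G^\mu$, and that the partition of this common stable class into $G^\bullet_{\gamma^\bullet_s}$-stable subclasses matches bijectively between $\lambda$ and $\mu$. Once this bijection is set up, the per-index-class equality reduces to the stability of Green functions on a common inner form, which was the content of Waldspurger's work used in \cite[\S12]{DR09} and which is available to us in the unramified setting under the hypotheses of the theorem.
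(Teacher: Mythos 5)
Your opening moves match the paper's: specialize Proposition \ref{pro:schar} to $t=1$, use Lemma \ref{lem:ha} to move the semisimple part into $S_0(F)$, and reduce to comparing $\gamma^\lambda$ with a stably conjugate $\gamma \in G(F)$. (You should also record the preliminary reduction to $\gamma^\lambda \in Z(F)G^\lambda_{\tx{sr}}(F)_0$ — outside this set both characters vanish by \cite[9.6.3]{DR09}, and Proposition \ref{pro:schar} only applies there.) But there is a genuine gap at the heart of your argument: the admissible ep twist $G^\lambda \rw G$ is \emph{not} strongly trivial when $\lambda \neq 0$ in $X_*(S_0)_\Gamma$, so the inner sums $\sum_Q R(G^\lambda_{\gamma_s},S_Q,1)(\gamma_u)$ and $\sum_Q R(G_{\gamma_s},S_Q,1)(\gamma_u)$ live on two genuinely different inner forms $G^\lambda_s$ and $G_t$, whose parahoric reductive quotients are different finite groups. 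There is no ``common inner form'' and no $F$-isomorphism intertwining the Deligne--Lusztig data, so ``stability of Green functions on the special fiber'' does not compare the two sides; it only makes each side well-defined on stable classes within a fixed group. The comparison you need is precisely the analytic content of the theorem, and your proposal, as written, would prove stability without ever using the Kottwitz sign $e_\xi$ built into the definition of $\Theta^t_{[\phi]}$ or the hypotheses on $p$ and $q_F$ — which should signal that something essential is missing.

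What the paper actually does at this point is: (i) choose $Q_0 \in \tx{Lie}(S_0)(O_F)$ with strongly regular reduction and invoke \cite[12.4.3]{DR09} to convert each inner Green-function sum into $\epsilon(\cdot,A_\cdot)$ times the Fourier transform $\hat S_{P}(\log(\cdot))$ of a stable orbital integral on the Lie algebra (this is where the depth-zero hypothesis and the restrictions on $p,q_F$ enter); (ii) apply Waldspurger's theorem \cite[Conj 1.2]{Wal97} to transfer these Fourier transforms between the inner forms $G^\lambda_s$ and $G_t$ — this transfer is not an identity but carries the factor $\gamma_\chi(B)\gamma_\chi(B')^{-1}$; and (iii) use Lemma \ref{lem:sign} to identify that factor with the relative Kottwitz sign $\epsilon(G_t,G^\lambda_s)$, which is then absorbed by the discrepancy between $\epsilon(G^\lambda_s,A_{G^\lambda_s})$ and $\epsilon(G_t,A_{G_t})$ since $A_{G^\lambda_s}=A_{G_t}$. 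Your bookkeeping of the outer sum over stable classes $P$ is fine and matches the paper, but without steps (i)--(iii) the per-class equality of the inner sums does not follow.
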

\pf We have $\gamma^\lambda \in Z(F)G^\lambda_\tx{sr}(F)_0$ if and only if $\gamma^\mu \in Z(F)G^\mu_\tx{sr}(F)_0$. We may assume this is the case, for
otherwise both sides of the equation vanish \cite[9.6.3]{DR09}. Write $\gamma^\lambda=zsu$ with $z\in Z(F)$, $s$ top. semi-simple and $u$ top. unipotent. By
Lemma \ref{lem:ha} there exists $\gamma \in G(F)$ stably conjugate to $\gamma^\lambda$, with corresponding decomposition $\gamma=ztv$ and $t \in S_0(F)$. It
will be enough to show the proposition in the case $\mu=0$ and $\gamma^\mu=\gamma$.

Choose a semi-simple element $Q_0 \in \tx{Lie}(S_0)(O_F)$ with strongly-regular reduction; such an element exists, see \cite[12.4.2]{DR09}. Let $(\psi,b) :
G^\lambda \rw G$ be an admissible ep twist sending $\gamma^\lambda$ to $\gamma$. The ep twist $(\psi,b)$ restricts to an ep twist
\[ (\psi,b) : G^\lambda_{s} \rw G_{t} \]
and provides an injection of stable classes, both on the level of the groups and their Lie algebras. We will denote this injection again by $\psi$. Since
elliptic tori transfer to inner forms, this injection restricts to a bijection between those stable classes which are stably conjugate to $Q_0$ under
$G^\lambda$ resp. $G$. From Proposition \ref{pro:schar} and \cite[12.4.3]{DR09} we have
\[ \Theta_{[\phi]}^1(\gamma^\lambda)=\epsilon(G,A_G)\epsilon(G^\lambda_s,A_{G^\lambda_{s}})\theta_0(z) \sum_{P_\lambda} [\phi_{Q_0,P_\lambda}]_*\theta_0(s) \hat S_{P_\lambda}^{G^\lambda_s}(\log(u)) \]
\[ \Theta_{[\phi]}^1(\gamma)=\epsilon(G,A_G)\epsilon(G_{t},A_{G_{t}})\theta_0(z) \sum_{P} [\phi_{Q_0,P}]_*\theta_0(t) \hat S_{P}^{G_{t}}(\log(v)) \]
where $P_\lambda$ runs over a set of representatives for the $G^\lambda_{s}$-stable classes of elements of $\mf{g}^\lambda_{s}(F)$ which are stably conjugate
to $Q_0$, $P$ runs over a set of representatives for the $G_{t}$-stable classes of elements of $\mf{g}_{t}(F)$ which are stably conjugate to $Q_0$, and $\hat
S^{G^\lambda_{s}}_{P_\lambda}$ denotes the Fourier transform of the stable orbital integral of $P_\lambda$ in $\mf{g}^\lambda_{s}$. From \cite[Conj 1.2]{Wal97}
and Lemma \ref{lem:sign} we know
\[ \hat S_{P_\lambda}^{G^\lambda_{s}}(\log(u)) = \epsilon(G_{t},G^\lambda_{s}) \hat S_{\psi(P_\lambda)}^{G_{t}}(\log(v)) \]
Since $A_{G^\lambda_{s}}=A_{G_{t}}$, the lemma follows.\qed

Let $(H,s,{^L\eta})$ be an extended unramified endoscopic triple for $G$. Let $[\phi^H] : W_F \rw {^LH}$ be a Langlands parameter and put $[\phi] = [^L\eta]\circ[\phi]$. Assume that $\phi$ is a parameter of the type described in Section \ref{sec:parms}. Then so is $\phi^H$.

Fix a hyperspecial vertex $o \in \mc{B}^\tx{red}(G,F)$. To each element of $\tx{Gen}[G(F)o,0]$, we have the Whittaker
normalization of the transfer factor for $(G,H)$, as explained in \cite[5.3]{KS99}. In fact, the particular choice of element is irrelevant:

\begin{pro} Each element of $\tx{Gen}[G(F)o,0]$ determines the same Whittaker normalization of the transfer factor. \end{pro}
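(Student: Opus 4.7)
The plan is to reduce the statement to a residual ambiguity lying in $G_\tx{ad}(O_F)$ and then show that this ambiguity does not affect the Whittaker normalization. First, by Proposition \ref{pro:genchar}, every element of $\tx{Gen}[G(F)o,0]$ arises, up to $G(F)$-conjugation, from an element of $\tx{Spl}[G(F)o]$ via the recipe given there. The Whittaker normalization $\Delta^\psi$ depends on the Whittaker datum $(B,\psi)$ only through its $G(F)$-conjugacy class, since conjugating $(B,\psi)$ by $g \in G(F)$ merely reparameterizes the conjugacy classes of strongly regular semi-simple elements on which $\Delta^\psi$ is defined. Thus it is enough to compare the normalizations produced by two $O_F$-splittings in $\tx{Spl}[G(F)o]$.

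Next I use the $G(F)$-action to move both splittings to the same hyperspecial vertex $o' \in G(F)o$. I then claim that any two elements of $\tx{Spl}[o']$ are related by some $g_\tx{ad} \in G_\tx{ad}(O_F)$. This follows from $G_\tx{ad}$ acting simply transitively on splittings of $G$ together with the fact that the scheme of splittings is smooth over $O_F$, so splittings of $G$ over $O_F$ lift from splittings of the quasi-split reduction $\ts{G}$ over $k_F$; this is the same smoothness-plus-Lang argument already used in the construction of $p_0$ in Step 1 of \ref{sec:packs}.

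It remains to verify that the Whittaker normalization is invariant under replacing $(B,\psi)$ by $\tx{Ad}(g_\tx{ad})(B,\psi)$ for $g_\tx{ad} \in G_\tx{ad}(O_F)$. By the formulas recorded in \cite[\S5.3]{KS99}, the ratio $\Delta^{\tx{Ad}(g_\tx{ad})\psi}/\Delta^\psi$ is computed by the local Tate--Nakayama pairing of the image of $g_\tx{ad}$ in $H^1(F, Z(G_\tx{sc}))$ with the image of $s$ in the appropriate dual group. Because $g_\tx{ad}$ lies in $G_\tx{ad}(O_F)$, its connecting class is unramified; because the extended endoscopic triple $(H,s,{^L\eta})$ is unramified by hypothesis, the relevant image of $s$ is also unramified. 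The local duality pairing between unramified classes and unramified characters vanishes, which gives the required equality of normalizations.

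The main obstacle will be pinning down precisely which cohomological quantity governs the dependence of $\Delta^\psi$ on the Whittaker datum under $G_\tx{ad}(F)$-conjugation, and matching it with the unramified Tate--Nakayama pairing above; once that identification is in place, the remaining ingredients are either standard (vanishing of the unramified pairing) or have already been developed in \ref{sec:packs} and \ref{sec:generic}.
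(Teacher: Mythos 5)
Your reduction---conjugate by $G(F)$, invoke Proposition \ref{pro:genchar} to replace generic characters by $O_F$-splittings, and observe that two such splittings at the same vertex are related by some $g_\tx{ad} \in G_\tx{ad}(O_F)$---matches the first half of the paper's proof. The divergence, and the gap, is in the final step. The paper disposes of it by first noting that the Whittaker normalization differs from the normalization attached to the underlying splitting only by a factor depending on $\Lambda$, and then citing \cite[7.2]{Hal93}, which says precisely that any two hyperspecial $O_F$-splittings yield the same normalization of the transfer factor. You instead attempt a direct cohomological argument, but it does not close as written. The formula you attribute to \cite[\S 5.3]{KS99} is not there: that section defines the Whittaker normalization but does not compute its dependence on the Whittaker datum as a Tate--Nakayama pairing. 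What the literature does provide is the dependence of $\Delta_I$ on the $F$-splitting (\cite[Lemma 3.2.A]{LS87}), and to use it one must first pass from the Whittaker normalization to the splitting normalization via the $\epsilon$-factor --- exactly the intermediate step the paper makes explicit and your argument omits.

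More seriously, the vanishing you invoke is asserted rather than proved, and it is not an off-the-shelf fact. The class of $g_\tx{ad} \in G_\tx{ad}(O_F)$ in $H^1(F,Z(G_\tx{sc}))$ is indeed represented by an unramified cocycle (lift $g_\tx{ad}$ to $G_\tx{sc}(O_{F^u})$ by Lang's theorem), but it need not be trivial, and the pairing occurring in the splitting-dependence formula is the Tate--Nakayama pairing of the image of this class in $H^1(F,T_\tx{sc})$ --- where $T$ is the centralizer of the element at which the factor is evaluated, and may be ramified --- against the image of $s$ in $\hat{T}_\tx{ad}$. This is not a cup-product of an unramified class against an unramified character of the Galois group, so "unramified annihilates unramified" does not apply directly; establishing the triviality of this pairing for hyperspecial splittings is exactly the content of \cite[7.2]{Hal93}. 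Your closing paragraph concedes that identifying the governing cohomological quantity is "the main obstacle"; that obstacle is the essential mathematical point of the proposition, and it is what the paper's citation supplies.
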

\pf Let $(B,\psi)$ and $(B',\psi')$ belong to $\tx{Gen}[G(F)o,0]$. Conjugating the Whittaker data by $G(F)$ has no effect on the transfer factors, so we may
assume $B=B'$, as well as $(B,\psi),(B,\psi') \in \tx{Gen}[o,0]$. Let $(T,B,\{X_\alpha\})$ and $(T,B,\{X'_\alpha\})$ be two elements of $\tx{Spl}[o,0]$ which give rise to $\psi$ resp. $\psi'$ as in
Proposition \ref{pro:genchar}. The normalization of transfer factor corresponding to the Whittaker data $(B,\psi)$ differs from the normalization corresponding
to the splitting $(T,B,\{X_\alpha\})$ by a term depending on $\Lambda$. Hence it is enough to show that the two splittings $(T,B,\{X_\alpha\})$ and
$(T,B,\{X'_\alpha\})$ lead to the same normalization, which is the content of \cite[7.2]{Hal93}.\qed

Thus the Whittaker normalization of the transfer factor depends only on the $G(F)$-orbit of $o$. We will write $\Delta_o$ for it. Of course, it also depends on
$\Lambda$ and the extended triple $(H,s,{^L\eta})$, but those we assume fixed.

Let $\lambda \in X_*(S_0)$ and $(G^\lambda,\psi_\lambda,b_\lambda)$ be the corresponding ep twist. Let $\Delta_{o,\lambda}$ be the normalization of the
transfer factor for $(G^\lambda,H)$ constructed from $\Delta_o$ in Section \ref{sec:tf}. With respect to this transfer factor we have the endoscopic lift of
the stable character of the $L$-packet $\Pi^H_{[\phi^H]}$
\[ \tx{Lift}^{G^\lambda,o}_H\Theta_{[\phi^H],0}^1(\gamma^\lambda) := \sum_{\gamma^H} \Delta_{o,\lambda}(\gamma^H,\gamma^\lambda)
\frac{D^H(\gamma^H)^2}{D^{G^\lambda}(\gamma^\lambda)^2}\Theta_{[\phi^H],0}^1(\gamma^H), \]%
where $\gamma^H$ runs over the stable conjugacy classes of strongly regular semi\-simple elements of $H(F)$ and $D$ are the usual Weyl discriminants.

\begin{thm} \label{thm:endo} For any  $\gamma^\lambda \in G_\tx{sr}(F)$ we have
\[ \tx{Lift}^{G^\lambda,o}_H\Theta_{[\phi^H],0}^1(\gamma^\lambda) = \Theta_{[\phi]}^s(\gamma^\lambda). \]
\end{thm}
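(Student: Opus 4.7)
The plan is to adapt the argument of \cite[\S7]{Kal09} to the ep‑twist setting, with the two new inputs (Proposition \ref{pro:schar} and Lemma \ref{lem:sign}) playing the role of the corresponding statements for pure inner twists. First I would reduce to the case $\gamma^\lambda\in Z(F)G^\lambda_\tx{sr}(F)_0$: if this fails then every $\gamma^H$ contributing to the left‑hand side must also fail to be topologically unipotent modulo center, so by \cite[9.6.3]{DR09} both $\Theta_{[\phi^H],0}^1(\gamma^H)$ and $\Theta_{[\phi]}^s(\gamma^\lambda)$ vanish. Writing $\gamma^\lambda=zsu$ in its topological Jordan decomposition, Lemma \ref{lem:ha} then lets me assume that $\gamma^\lambda$ has a stable conjugate in $G(F)$ whose semisimple part lies in $S_0(F)$, so I may fix $Q_0\in\tx{Lie}(S_0)(O_F)$ with strongly regular reduction and apply Proposition \ref{pro:schar} to $\Theta_{[\phi]}^s(\gamma^\lambda)$.

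Next I would expand the right‑hand side via Proposition \ref{pro:schar}, obtaining a double sum over representatives $P$ of the stable classes in $\mf{g}^\lambda_s(F)$ stably conjugate to $Q_0$ and over $G^\lambda_s(F)$‑classes $Q$ inside each stable class, with weights $[\phi_{Q_0,Q}]_*\theta_0(s)\cdot \langle\tx{inv}(Q_0,Q),s\rangle^{-1}$ times $R(G^\lambda_s,S_Q,1)(u)$. Using the Shalika germ expansion of Harish‑Chandra I rewrite $R(G^\lambda_s,S_Q,1)(u)$ as (a fixed sign times) the Fourier transform of the regular orbital integral of $P$ on $\mf{g}^\lambda_s(F)$, following \cite[12.4.3]{DR09}. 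For the left‑hand side, I apply the same proposition to $G=H$: Theorem \ref{thm:stab} (applied to $H$) guarantees that $\Theta_{[\phi^H],0}^1$ is indeed stable, so $\Theta_{[\phi^H],0}^1(\gamma^H)$ becomes, up to the Kottwitz sign of $H_{s^H}$, a linear combination of Fourier transforms of stable orbital integrals indexed by the stable classes in $\mf{h}_{s^H}(F)$ stably conjugate to a chosen $Q_0^H$; I take $Q_0^H$ to be the transfer of $Q_0$ under the admissible isomorphism supplied by matching.

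At this point the argument becomes an identity of two sums of Fourier transforms of stable orbital integrals in Lie algebras of connected reductive groups. The crucial step is Waldspurger's endoscopic transfer for the Lie algebra \cite[Conj 1.2]{Wal97}, which identifies $\hat S^{G^\lambda_s}_{\psi(P)}$ with $\hat S^{H_{s^H}}_{P^H}$ up to the factor $\gamma_\chi(B)\gamma_\chi(B')^{-1}$; by Lemma \ref{lem:sign} (applied to the pair $G_t$, $G^\lambda_s$ on the inner‑form side and to the pair $G_t$, $H_{s^H}$ on the endoscopic side) this factor becomes exactly the product of Kottwitz signs needed to cancel the $e(\cdot)$'s coming from both reduction formulas. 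The remaining combinatorial work is to match the coefficients: one must check that
\[
\sum_{\gamma^H}\Delta_{o,\lambda}(\gamma^H,\gamma^\lambda)\,[\phi_{Q_0^H,P^H}]_*\theta_0^H(s^H) \;=\; \sum_Q \langle\tx{inv}(Q_0,Q),s\rangle^{-1}[\phi_{Q_0,Q}]_*\theta_0(s),
\]
where on the left $\gamma^H$ runs over stable classes related to $\gamma^\lambda$ and $P^H$ is the transfer of $P$. Unfolding the definition $\Delta_{o,\lambda}=\Delta_o\cdot\langle\tx{inv}(\cdot,\cdot),\hat\phi(s)\rangle^{-1}$ from Section \ref{sec:tf} and invoking the compatibility of $^Lj$ with $s$, this reduces to the same computation as in \cite[\S7]{Kal09}; the only genuine change is that the pairing $\langle\tx{inv}(\gamma,\gamma^\lambda),\hat\phi(s)\rangle$ is now the Kottwitz pairing on $\tb{B}(T_\gamma)$ rather than on $H^1(F,T_\gamma)$, which is precisely what Proposition \ref{pro:tf} was set up to accommodate.

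The main obstacle is this last bookkeeping: getting the pairing $\langle\tx{inv}(Q_0,Q),s\rangle$ coming from Proposition \ref{pro:schar} to match the Tate–Nakayama pairing appearing in the definition of $\Delta_{o,\lambda}$. This is exactly where the functoriality of Kottwitz's isomorphism $X_*(S_0)_\Gamma\to\tb{B}(S_0)$ and the identification $^Lj:[\hat S_0]^\Gamma\xrightarrow{\sim}S_\phi$ must be used compatibly; once this compatibility is in place (and it is precisely what was encoded in step 2 of Section \ref{sec:packs}), the remainder of \cite[\S7]{Kal09} applies verbatim. I would therefore spend most of the write‑up carefully verifying this compatibility and then refer to loc.\ cit.\ for the finish.
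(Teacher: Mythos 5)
Your proposal follows essentially the same route as the paper, whose proof simply invokes the argument of \cite[\S7]{Kal09} with Proposition \ref{pro:schar}, Lemma \ref{lem:ha} and Lemma \ref{lem:sign} substituted for their pure-inner-twist analogues; your unpacking of that argument (reduction to the topologically unipotent case modulo center, the reduction formula on both sides, Waldspurger's Lie-algebra transfer, and the transfer-factor bookkeeping via the Kottwitz pairing on $\tb{B}(T_\gamma)$) is accurate. The one small slip is that Lemma \ref{lem:sign} covers only inner twists, so the Weil-constant ratio for the endoscopic pair $(G_{t},H_{s^H})$ comes not from it but from \cite[\S4]{Kal09}, as already noted in Section \ref{sec:seprep}.
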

\pf The argument in \cite[\S7]{Kal09} works just as well in this case. The only modifications necessary are that references to Section 2, Proposition 6.2.2,
and Lemma 7.2.2 in loc. cit, and to \cite[\S12.3]{DR09}, have to be replaced with references to Section 2, Proposition \ref{pro:schar}, Lemma \ref{lem:ha} and
Lemma \ref{lem:sign} in this paper.\qed


{\small
Tasho Kaletha\\
tkaletha@math.ias.edu\\
School of Mathematics, Institute for Advanced Study, \\
Einstein Drive, Princeton, NJ 08540
}

\end{document}